\documentclass[10pt]{amsart}

\usepackage{amscd, verbatim, amssymb, multirow, multicol}
\usepackage{amsmath}
\usepackage{amsthm}
\usepackage{graphicx}
\usepackage{hyperref, todonotes}

\newcommand{\End}{\ensuremath {\mathcal End}}
\newcommand{\Ext}{\ensuremath {\mathrm {Ext}}}
\newcommand{\Tors}{\ensuremath {\mathrm {Tors\,}}}
\newcommand{\Hom}{\ensuremath {\mathrm {Hom}}}
\newcommand{\cHom}{\ensuremath {\mathcal Hom}}

\newcommand{\Aut}{\ensuremath {\mathrm {Aut}}}

\newcommand{\Pic}{\ensuremath {\mathrm {Pic}}}

\newcommand{\lcm}{\ensuremath {\mathrm{lcm}}}

\newcommand{\rank}{\ensuremath {\mathrm {rank\,}}}
\newcommand{\Cl}{\ensuremath {\mathrm {Cl\,}}}
\newcommand{\length}{\ensuremath {\mathrm {length\,}}}

\newcommand{\MM}{\ensuremath {\mathcal M}}
\newcommand{\Mbar}{\ensuremath {\overline{\mathcal M}}}

\newcommand{\OO}{\ensuremath {\mathcal O}}

\newcommand{\II}{\ensuremath {\mathcal I}}

\newcommand{\Q}{\ensuremath {\mathbb Q}}
\newcommand{\C}{\ensuremath {\mathbb C}}
\newcommand{\R}{\ensuremath {\mathbb R}}

\newcommand{\Pro}{\ensuremath {\mathbb P}}
\newcommand{\Z}{\ensuremath {\mathbb Z}}

\newcommand{\X}{\ensuremath {\widetilde X}}
\newcommand{\wZ}{\ensuremath {\widetilde Z}}

\newtheorem{thm}{Theorem}[section]
\newtheorem{prop}[thm]{Proposition}
\newtheorem{lem}[thm]{Lemma}
\newtheorem{cor}[thm]{Corollary}

\theoremstyle{definition}
\newtheorem{defin}[thm]{Definition}
\newtheorem{remark}[thm]{Remark}

\begin{document}
\title[Degenerations of Godeaux surfaces and vector bundles]{Degenerations of Godeaux surfaces and exceptional vector bundles}
\author{Anna Kazanova}
\address{Department of Mathematics, University of Georgia, Athens, GA 30602}
\email{kazanova@math.uga.edu} 

\begin{abstract}A recent construction of Hacking relates the classification of stable vector bundles on a surface of general type with $p_g = 0$ and the boundary of the moduli space of deformations of the surface. In the present paper we analyze this relation for Godeaux surfaces. We provide a description of certain boundary components of the moduli space of Godeaux surfaces. Also we explicitly construct certain exceptional vector bundles of rank 2 on Godeaux surfaces, stable with respect to the canonical class, and examine the correspondence between the boundary components and such exceptional vector bundles.
\end{abstract}

\maketitle

\vspace{-.26in}

\section{Introduction}

Complex algebraic surfaces $Y$ of  Kodaira dimension 2 are called surfaces of general type. Such surfaces are classified according to  discrete topological invariants $K^2 = c_1(Y)^2$ and $\chi = \chi(\OO_Y)$. Having fixed these invariants,  one can consider the space of all surfaces of general type  with given invariants, i.e., the moduli space $\MM = \MM_{K^2, \chi}$, which itself has the structure of an algebraic variety.

While not compact,  $\MM_{K^2, \chi}$ admits a natural compactification, the moduli space $\Mbar_{K^2, \chi}$ of  stable surfaces introdced by  Koll\'ar  and Shepherd-Barron~\cite{KSB} and Alexeev~\cite{Ale96}. This is an analog of the compactification of Deligne and Mumford $\overline \MM_g$, of  the moduli space $\MM_g$ of smooth curves of genus $g\geqslant 2$~\cite{DM}.

 Such stable surfaces can contain  isolated singularities or even mild singularities along curves.  Also  the moduli space $\Mbar_{K^2, \chi}$ can be arbitrarily singular  \cite{V06},  or have arbitrarily many connected components \cite{C86}.
Fortunately,  some connected components of the  boundary of $\Mbar_{K^2, \chi}$ are relatively well-behaved. They correspond to a degeneration of a smooth surface of general type to a surface with a unique quotient singularity of a special type, first studied by Wahl~\cite{Wahl}.

\begin{defin} A {\it singularity  of Wahl type} $\frac{1}{n^2}(1, na-1)$ is a 
cyclic quotient singularity 
$0\in (\C^2/ \Z/n^2\Z)$
given by 
$$
\Z/n^2\Z \ni 1: (u,v)\mapsto (\zeta u, \zeta^{na-1}v), $$
where $a$ and $n$ are  positive integers such that $a<n$,  $(a,n)=1$, and  $ \zeta = \exp(2\pi i / n^2)$.
\end{defin}

In the absence of local-to-global obstructions, a singularity of Wahl type  on a surface $X$ admits  a smoothing $Y \leadsto X$ such that  $H_2(Y, \Q) \simeq H_2(X, \Q)$.
The local smoothing is determined by a single deformation parameter, so the locus of equisingular deformation of $X$  defines a codimension 1 boundary component of the moduli space of deformations of $Y$.  We will call such a boundary component a $\frac{1}{n^2}(1, na-1)$ boundary component.

In this paper we investigate some of the boundary components  of the moduli space $\Mbar_{1,1}$, containing Godeaux surfaces.
  A Godeaux surface is a minimal surface of general type  with invariants $K^2 =1$ and $p_g=0$. Godeaux surfaces are in some sense the simplest surfaces of general type satisfying $H^1(Y) = H^{2,0}(Y) =0$.

In \cite{Ha11}, Hacking describes a way to construct  exceptional vector bundles of rank $n$  on a smooth surface $Y$  such that $H^1(Y)=H^{2,0}(Y)=0$ using  degenerations $Y \leadsto X$ of  $Y$  to a surface $X$ with a unique singularity of Wahl type $ \frac{1}{n^2}(1,na-1)$.  Exceptional vector bundles on a surface $Y$, discussed in Section~3, are holomorphic vector bundles $E$ such that $\Hom (E, E) =\C$ and $\Ext^1(E, E) = \Ext^2(E, E) =0$.  In particular, such a vector bundle is indecomposable, rigid and unobstructed in families, i.e., it deforms in a unique way in a family of surfaces. 
Exceptional vector bundles have also appeared in decompositions of the derived categories on Godeaux surfaces~\cite{BBS} and on Burniat surfaces~\cite{AO}.

The construction of Hacking gives rise to a correspondence  
\begin{equation}
\label{ch}
\mbox{\{Wahl degenerations\}}  \longrightarrow \mbox{\{exceptional vector bundles}
\}/\sim, 
\end{equation}
where $\sim$ denotes the equivalence relation defined in Remark \ref{equiv}.

This correspondence is  bijective in the case $Y = \Pro^2$~\cite{Ha11}. Thus it was natural to study the correspondence~(\ref{ch}) for other surfaces, in particular surfaces of general type.  
In the present paper we examine the correspondence~(\ref{ch}) for Godeaux surfaces and Wahl degenerations corresponding to the $\frac{1}{4}(1,1)$ singularity.

Throughout the paper we will make use of the classification of Godeaux surfaces  according to  $H_1(Y, \Z)$,  which is cyclic of order at most 5~\cite{Reid}.   There exists a complete description of the moduli space of Godeaux surfaces in each of the cases $H_1(Y, \Z) = \Z/5\Z$, $\Z/4\Z$ or $\Z/3\Z$~\cite{Reid}.   Interestingly, for the remaining two cases, the description of the moduli space of Godeaux surfaces is still unknown. 
Several examples of Godeaux surfaces with  $H_1(Y, \Z)= \Z/2\Z$ were constructed in \cite{Bar84}, \cite{KLP}, \cite{CD89}  and some work towards classification is  done in \cite{C10}. 
In the case $H_1(Y, \Z)=0$ the only known  examples  are described in  \cite{Bar85}, \cite{DW99}, \cite {LP}, but it is not even  known  if these examples belong to the same irreducible component of the moduli space.

\subsection{Results}

First,  we classify all possible degenerations of a smooth Godeaux surface  $Y$ to a surface $X$ with unique singularity of Wahl type $\frac{1}{4}(1,1)$, such that $K_X$ is ample. In other words, we describe the boundary components of the KSBA compactification of the moduli space of smooth Godeaux surfaces corresponding to surfaces with a unique such singularity.

\begin{thm}
\label{intro-d}
The $\frac{1}{4}(1,1)$ boundary components in the the KSBA compactification all parametrize surfaces whose minimal resolution is a proper elliptic surface. There are no such components when $H_1(Y, \Z) = \Z/5\Z$,  and at least one for each other possible value of  $H_1(Y, \Z) $.
\end{thm}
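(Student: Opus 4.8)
The plan is to analyze a Wahl degeneration $Y \leadsto X$ with $X$ having a unique $\frac14(1,1)$ singularity directly on the level of the minimal resolution $\widetilde X \to X$. Since $K_X$ is ample, $X$ is a stable surface with $K_X^2 = K_Y^2 = 1$ and $\chi(\OO_X) = \chi(\OO_Y) = 1$; the exceptional divisor of the resolution of a $\frac14(1,1)$ singularity is a single smooth rational curve $E$ with $E^2 = -4$, and the discrepancy computation gives $K_{\widetilde X} = \pi^* K_X - \tfrac12 E$, so $K_{\widetilde X}^2 = K_X^2 - 1 = 0$. Thus $\widetilde X$ is a (not necessarily minimal) surface with $K^2 = 0$, $\chi(\OO) = 1$, $q = 0$; after contracting any $(-1)$-curves we obtain a minimal model with $K^2 = 0$, and since $\widetilde X$ cannot be rational or ruled (it deforms to a surface of general type, so $p_g$ and the plurigenera behave appropriately, e.g. $P_2(\widetilde X) = P_2(X) \ge 1$ because $2K_X$ is Cartier and Riemann--Roch gives sections), its Kodaira dimension is $0$ or $1$; Kodaira dimension $0$ with $q=0$, $p_g=0$ forces an Enriques surface, which has $K^2 = 0$ and torsion $\Z/2\Z$ but no $(-4)$-curve configuration compatible with the Wahl smoothing constraint, leaving Kodaira dimension $1$, i.e. a proper (honestly elliptic) minimal elliptic surface. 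Making this dichotomy airtight — ruling out the Enriques case and the rational/ruled cases using the numerical invariants plus the rationality obstruction coming from the smoothing — is the first main step, and I expect the Enriques exclusion to require the most care.

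Next I would use the classification of Godeaux surfaces by $H_1(Y,\Z)$ together with the constraint that a $\frac14(1,1)$ smoothing preserves $H_2(-,\Q)$, hence preserves the rational Picard number, and imposes $\pi_1$-level restrictions: the Milnor fiber of the $\frac14(1,1)$ smoothing has nontrivial $\pi_1$, and the local fundamental group constraint (the Wahl singularity with $n = 2$) forces a $2$-torsion-type condition on the covering geometry, which is incompatible with $H_1(Y,\Z) = \Z/5\Z$. Concretely, one passes to the index-$5$ étale cover $\widehat Y \to Y$, which is a simply connected surface with $K^2 = 5$, $p_g = 4$; a degeneration of $Y$ would induce one of $\widehat Y$, and tracking the $\frac14(1,1)$ singularity through the cover (either it lifts to five such singularities or the cover is ramified over it in a way that is numerically obstructed) yields a contradiction with the known geometry of $\widehat Y$ — in particular with $K_{\widehat Y}^2 = 5$ and the resulting elliptic fibration structure. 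This is the "no components for $\Z/5\Z$" half of the statement.

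For the existence half, I would construct, for each of $H_1 = \Z/4\Z,\ \Z/3\Z,\ \Z/2\Z,\ 0$, an explicit stable surface $X$ with a unique $\frac14(1,1)$ singularity, ample $K_X$, and $\pi^{-1}$-minimal resolution a proper elliptic surface, and verify it actually lies on the Godeaux boundary by exhibiting a one-parameter smoothing (no local-to-global obstruction, so it suffices to check $H^2(X, T_X) $ or the relevant $T^2$ vanishes, or simply produce the family). The natural source of such $X$: take a proper elliptic surface $S$ with $p_g = q = 0$ carrying a suitable configuration of a $(-4)$-curve (or a chain contracting to $\frac14(1,1)$) together with the correct torsion in $\Pic$, arising for instance from logarithmic transforms on a rational elliptic surface or from quotients thereof; contracting gives $X$, and the torsion of $\Pic(S)$ matched with the exceptional curve controls $H_1(X) = H_1(Y)$. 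For each torsion value one produces at least one such $S$, which gives "at least one boundary component" in each case. The main obstacle here is ensuring $K_X$ is ample (equivalently $K_{\widetilde X}$ is nef and meets every curve except $E$ positively, which for an elliptic surface means no multiple fibers or $(-2)$-curves interfere) and that the component is genuinely $\tfrac14(1,1)$ and nonempty of the expected codimension — I would handle ampleness by a careful choice of the fiber configuration and verify nonemptiness of the boundary stratum via a dimension count against the known dimension of $\MM_{1,1}$ in the cases $\Z/4\Z, \Z/3\Z$ and via the explicit families in the literature for $\Z/2\Z$ and $0$.
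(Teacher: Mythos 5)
There are genuine gaps. The most serious is the exclusion of $H_1(Y,\Z)=\Z/5\Z$: neither of the mechanisms you propose actually yields a contradiction. The ``$2$-torsion condition'' from the Milnor fiber only says that the surjection $H_1(Y)\twoheadrightarrow H_1(X)$ has kernel $0$ or $\Z/2\Z$, which for $\Z/5\Z$ merely forces $H_1(X)\simeq\Z/5\Z$ --- no contradiction yet. And passing to the degree-$5$ \'etale cover gives a degeneration of a simply connected surface with $K^2=5$ to a surface with five $\frac14(1,1)$ points, whose resolution is a minimal surface with $K^2=0$, $\chi=5$, $q=0$; such (properly elliptic) surfaces exist, so nothing is ``numerically obstructed'' at this level of generality. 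The argument that actually works, and that your proposal is missing entirely, is the fibration analysis: once $\X$ is known to be minimal properly elliptic, it fibers over $\Pro^1$, the Kodaira canonical bundle formula gives $K_{\X}=(-1+\sum\frac{m_i-1}{m_i})A$, and intersecting with the $(-4)$-curve $C$ (which satisfies $C\cdot K_{\X}=2$) yields the Diophantine equation $1+\frac{2}{n}=\sum\frac{m_i-1}{m_i}$ with $m_i\mid n=C\cdot A$. Its solutions force exactly two multiple fibers (after discarding the all-$m_i=2$ solutions via the $H_1$ surjection) with $\pi_1(\X)\simeq\Z/\gcd(m_1,m_2)\Z\in\{\Z/4,\Z/3,\Z/2,0\}$, and $\Z/5\Z$ never occurs. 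Without this computation you have no handle on which $H_1$ can arise.

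Two further problems. First, the theorem asserts that the \emph{minimal resolution} is properly elliptic, so you must prove $\X$ is already minimal; your phrase ``after contracting any $(-1)$-curves we obtain a minimal model with $K^2=0$'' is both insufficient and incorrect (contracting a $(-1)$-curve increases $K^2$ by $1$, so if $\X$ were non-minimal its minimal model would have $K^2>0$ and could be of general type). The paper rules out $(-1)$-curves $F$ by showing $C\cdot F\geq 2$ (from nefness of $K_X$) and then applying the Hodge index theorem to $D=C+2F$, which satisfies $D^2\geq 0$ and $K_{\X}\cdot D=0$. Second, your Enriques exclusion is overcomplicated and vague where it should be immediate: $K_{\X}\cdot C=2\neq 0$, so $K_{\X}$ is not numerically trivial, which kills both the Enriques and K3 cases at once. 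For the existence half, your plan (build an elliptic surface with a $(-4)$-curve and smooth the contraction) is workable in spirit for $H_1=\Z/2\Z$ and $0$, where the paper indeed quotes the Lee--Park and Keum--Lee--Park unobstructed-smoothing constructions; but for $\Z/4\Z$ and $\Z/3\Z$ the paper instead writes explicit one-parameter degenerations in weighted projective space (degenerating the ambient $\Pro(1^3,2^2)$, resp.\ choosing equations through the singular locus), and in any case ``a dimension count against the known dimension of the moduli space'' does not establish nonemptiness of a boundary stratum --- you need either explicit equations or a verified vanishing of $H^2(T_{X'})$.
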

See Theorem~\ref{cases} for a more precise statement.

Second, we classify all exceptional vector bundles of rank 2 on smooth Godeaux surfaces into two orbits under the natural  equivalence relation obtained from the construction of Hacking. We provide the complete description for one of the orbits.

\begin{thm} \label{intro-e} 
 If $E$ is a $K_Y$-stable exceptional vector bundle of rank 2 on a Godeaux surface $Y$ with   $c_1(E)=K_Y$ modulo torsion, then, after tensoring by a torsion line bundle,
 $E$ can be written as an extension
 \begin{equation*}
0\to \OO_Y\to E\to \OO_Y(K_Y+ \sigma)\otimes \II_P\to 0,
\end{equation*}
 where $I_P$ is the ideal sheaf of a point P which is a base point of   $|2K_Y+\sigma|$, and $\sigma \in \Tors Y\setminus 2\Tors Y$, so we must have  $H_1(Y) = \Z/4\Z$ or $\Z/2\Z$.
 
Conversely, given $P$ and $\sigma$ as above, there is a unique non-trivial extension $E$ of this form, and $E$ is a $K_Y$-stable exceptional vector bundle provided $P$ is a simple basepoint.

\end{thm}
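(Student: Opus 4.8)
The plan is to analyze the two implications of Theorem~\ref{intro-e} separately, using Serre duality and the standard correspondence between rank-2 bundles and extensions.

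For the first (direct) implication, suppose $E$ is a $K_Y$-stable exceptional vector bundle of rank 2 with $c_1(E) \equiv K_Y$ modulo torsion. Since $\Pic Y$ has torsion $\Tors Y$, after tensoring $E$ by a suitable torsion line bundle we may assume $c_1(E) = K_Y + \tau$ for some $\tau \in \Tors Y$, and we aim to identify $\tau = \sigma$ (or $0$) with the structure claimed. The first step is to produce a section of $E$, i.e.\ show $H^0(E) \neq 0$: compute $\chi(E)$ via Riemann--Roch (using $c_1^2, c_2$, which are pinned down by $c_1(E)^2 = K_Y^2 = 1$ and the constraint $c_2(E) = 1$ coming from exceptionality via $\chi(E\otimes E^\vee) = 1$), and rule out $H^2(E) = H^0(K_Y \otimes E^\vee)^\vee$ being large by stability — $K_Y$-stability of $E$ forces $E^\vee \otimes K_Y$ to have no sections when slopes are incompatible. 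A nonzero section $\OO_Y \to E$ cannot vanish on a divisor (that would destabilize $E$, since a sub-line-bundle $\OO_Y(D)$ with $D > 0$ has slope $D\cdot K_Y > 0$ while $\mu(E) = \tfrac12 K_Y^2 = \tfrac12$, so we'd need $D \cdot K_Y < \tfrac12$, hence $= 0$, hence $D = 0$ by Hodge index, contradiction), so it vanishes on a zero-dimensional subscheme, giving the extension
\begin{equation*}
0 \to \OO_Y \to E \to \OO_Y(K_Y + \tau) \otimes \II_Z \to 0
\end{equation*}
with $\ell(Z) = c_2(E) = 1$, so $Z = P$ is a single reduced point. The remaining work in this direction is to show that exceptionality of $E$ forces $P$ to be a base point of $|2K_Y + \tau|$ and forces $\tau = \sigma \notin 2\Tors Y$ — this I would extract by taking $\Hom$ and $\Ext^i$ of the extension with itself, using $\Ext^1(E,E) = \Ext^2(E,E) = 0$ and the long exact sequences; the vanishing of $\Ext^2(E,E) = \Hom(E, E \otimes K_Y)^\vee$ together with $\Ext^1$ will translate into $H^1$ and $H^0$ conditions on $\OO_Y(2K_Y + \tau) \otimes \II_P$ and $\OO_Y(-\tau)$ respectively, and $H^0(\OO_Y(-\tau)) = 0$ precisely when $\tau \neq 0$, while the condition that $P$ be a base point of $|2K_Y+\tau|$ is what makes the relevant connecting map vanish. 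The constraint $H_1(Y) = \Z/4\Z$ or $\Z/2\Z$ then follows because we need a torsion element $\sigma$ with $\sigma \notin 2\Tors Y$, i.e.\ $\Tors Y \neq 2\Tors Y$, which fails exactly when $\Tors Y$ has odd order (so $H_1 = \Z/5\Z, \Z/3\Z, 0$ are excluded).

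For the converse, given $\sigma \in \Tors Y \setminus 2\Tors Y$ and a base point $P$ of $|2K_Y + \sigma|$, I would compute $\Ext^1(\OO_Y(K_Y+\sigma)\otimes\II_P, \OO_Y)$ by Serre duality and the local-to-global spectral sequence: it equals $H^1$ of something plus $H^0(\EE xt^1(\II_P, \cdot))$, and one shows this is 1-dimensional (the $\EE xt^1$-term, a length-1 sky-scraper at $P$, contributes $\C$ and the global $H^1$-terms vanish), yielding a unique non-trivial extension $E$. That this $E$ is locally free is the standard Cayley--Bacharach condition: $E$ is a vector bundle iff $P$ imposes independent conditions on $|K_Y - (K_Y + \sigma) + K_Y| = |K_Y - \sigma|$... more precisely the Cayley--Bacharach condition for the pair $(\OO_Y, \OO_Y(K_Y+\sigma))$ at $P$, which for a single point reduces to $P \notin \mathrm{Bs}|K_Y \otimes (\OO_Y(K_Y+\sigma))^\vee \otimes K_Y| = |K_Y - \sigma|$; but $|K_Y-\sigma|$ is empty on a Godeaux surface (since $p_g = 0$ and $\sigma \ne 0$ means $h^0(K_Y - \sigma) = h^0(2K_Y - \sigma + \ldots)$—more directly $\chi(\OO_Y(K_Y-\sigma))$ computation forces $h^0 = 0$), so the condition holds vacuously and $E$ is locally free. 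Finally, $K_Y$-stability: any destabilizing sub-line-bundle $L \subset E$ would have $L \cdot K_Y \geq \mu(E) = \tfrac12$, hence $\geq 1$; chasing $L$ into the extension shows $L \subset \OO_Y$ is impossible unless $L = \OO_Y$ (slope $0 < \tfrac12$, fine, not destabilizing) and $L$ mapping nontrivially to $\OO_Y(K_Y+\sigma)\otimes\II_P$ gives $L = \OO_Y(K_Y + \sigma - D)$ for effective $D$ through $P$, with $L\cdot K_Y \leq K_Y^2 = 1$... ruling out equality uses that a splitting $L = \OO_Y(K_Y+\sigma)$ would contradict non-triviality of the extension, and $D \neq 0$ forces $L \cdot K_Y < 1$, excluded. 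Exceptionality of $E$ — $\Hom(E,E) = \C$, $\Ext^1 = \Ext^2 = 0$ — I would get by applying $\Hom(E, -)$ and $\Hom(-, E)$ to the extension and feeding in the assumption that $P$ is a \emph{simple} (reduced, length-1) base point: the relevant $\Ext$-groups reduce to $H^i(\OO_Y(\pm\sigma))$, $H^i(\OO_Y(\pm(2K_Y+\sigma))\otimes\II_P)$, etc., all of which vanish in the needed degrees by Godeaux vanishing ($p_g = q = 0$), Serre duality, and the simple-basepoint hypothesis controlling the length-1 terms.

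The main obstacle I expect is the bookkeeping for exceptionality: $\Ext^1(E,E) = 0$ unwinds (via the two extension sequences composed) into a $3\times 3$ diagram of cohomology groups involving $H^\bullet(\OO_Y)$, $H^\bullet(\OO_Y(\pm\sigma))$, $H^\bullet(\OO_Y(2K_Y+\sigma)\otimes\II_P)$ and a length-1 local $\Ext$ term, and one must check that all connecting maps behave so that the alternating sum and the individual groups vanish — the subtle point being exactly where the hypothesis ``$P$ is a simple basepoint of $|2K_Y+\sigma|$'' enters to kill the map $H^0(\OO_Y(2K_Y+\sigma)) \to \C_P$ being surjective (it is not, since $P$ is a basepoint) while the evaluation at $P$ on a larger linear system is surjective. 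Getting these maps to line up, rather than the vanishing of the individual cohomology groups (which is routine Godeaux geometry), is the crux.
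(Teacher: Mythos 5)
Your overall architecture matches the paper's (extension presentation via a section with isolated zeros, Riemann--Roch plus Serre duality to produce the section, stability to rule out divisorial vanishing, local-to-global spectral sequence for uniqueness of the extension), but there are concrete gaps. The most serious is the Cayley--Bacharach step: for the extension $0\to \OO_Y\to E\to \OO_Y(K_Y+\sigma)\otimes \II_P\to 0$ the relevant pair in Theorem~\ref{Hlt} is $(L^{\vee}\otimes M\otimes K_Y, P)=(\OO_Y(2K_Y+\sigma), P)$, not $\OO_Y(K_Y-\sigma)$ as you computed, and for a single point the condition reads: every section of $|2K_Y+\sigma|$ vanishes at $P$, i.e.\ $P\in \mathrm{Bs}|2K_Y+\sigma|$. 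Since $h^0(2K_Y+\sigma)=2$ this is a genuinely restrictive condition on $P$ --- it is exactly the ``base point'' hypothesis in the statement, and it is where that hypothesis enters local freeness in \emph{both} directions (in the forward direction it is local freeness of $E$, not exceptionality, that forces $P$ to be a base point). Your claim that the condition ``holds vacuously'' would make the base-point hypothesis superfluous and is incompatible with your own forward direction.

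Two further points. First, your mechanism for showing $\sigma\notin 2\Tors Y$ (namely $H^0(\OO_Y(-\sigma))=0$ iff $\sigma\neq 0$) only excludes $\sigma=0$; to exclude $\sigma=2\tau\neq 0$ you must twist by $-\tau$ to reduce to the case $c_1(E)=K_Y$ and then exhibit non-exceptionality there --- the paper does this by composing the surjection $E\to\OO_Y(K_Y)\otimes\II_P$ with the inclusion into $E\otimes\OO_Y(K_Y)$ to get a nonzero element of $\Hom(E,E\otimes\omega_Y)=\Ext^2(E,E)^*$. Second, the vanishing $H^0(\OO_Y(K_Y+\sigma)\otimes\II_P)=0$ needed for $\Hom(E,E)=\C$ is not ``routine Godeaux geometry'': since $h^0(K_Y+\sigma)=1$ there is a unique curve $C\in|K_Y+\sigma|$, and one must show $P\notin C$. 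The paper does this by noting $C$ is irreducible with $p_a(C)=2$ and $\omega_C=(2K_Y+\sigma)|_C$, so $\omega_C$ is base-point-free by Hartshorne's theorem on Gorenstein curves, whereas $P$ is a base point of $|2K_Y+\sigma|$ and restriction of sections to $C$ is surjective; hence $P$ cannot lie on $C$. Without an argument of this kind your diagram chase for $\Hom(E,E)=\C$ does not close.
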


 Finally,  we investigate which of these exceptional vector bundles can be obtained using Hacking's construction.

\begin{thm}  \label{intro-c} Let $Y$ be a Godeaux surface with $H_1(Y,\Z)=\Z/4\Z$.
Every $K_Y$-stable exceptional bundle $E$  of rank 2  on $Y$ with $c_1(E) = K_Y$ modulo torsion is equivalent to one arising from a $\frac{1}{4}(1,1)$ Wahl degeneration $Y \leadsto X$ with $K_X$ ample.

\end{thm}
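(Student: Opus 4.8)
The idea is to match the explicit list of $\frac14(1,1)$ boundary components from Theorem~\ref{cases} with the classification of exceptional bundles in Theorem~\ref{intro-e}, using Hacking's construction as the dictionary.

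\emph{Step 1: reduction to a realization statement.} Given a $\frac14(1,1)$ Wahl degeneration $Y\leadsto X$ with $K_X$ ample, let $E$ be the rank~$2$ exceptional bundle on $Y$ produced by Hacking's construction, i.e.\ the restriction to the general fibre $Y$ of the $\Q$-Gorenstein extension over a smoothing $\mathcal X\to\Delta$ of the canonical rank~$2$ reflexive (orbifold) sheaf attached to the singularity $\frac14(1,1)=\frac1{n^2}(1,na-1)$ with $n=2$, $a=1$. The Chern-class bookkeeping of that construction forces $c_1(E)\equiv K_Y$ modulo torsion, so Theorem~\ref{intro-e} applies: after twisting by a torsion line bundle $E$ sits in a unique non-trivial extension
\[
0\to\OO_Y\to E\to\OO_Y(K_Y+\sigma)\otimes\II_P\to0,
\]
with $\sigma\in\Tors Y\setminus 2\Tors Y$ (hence of order $4$, as $H_1(Y,\Z)=\Z/4\Z$) and $P$ a simple base point of $|2K_Y+\sigma|$. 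Conversely, by Theorem~\ref{intro-e} every $K_Y$-stable rank~$2$ exceptional bundle on $Y$ with $c_1=K_Y$ modulo torsion is, up to the equivalence $\sim$ of Remark~\ref{equiv}, of this shape. Thus it suffices to prove that for each such pair $(\sigma,P)$ some degeneration $Y\leadsto X$ as above has associated bundle $\sim E$.

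\emph{Step 2: reading off the bundle from the explicit degenerations.} By Theorem~\ref{cases}, when $H_1(Y,\Z)=\Z/4\Z$ the minimal resolution $\X$ of $X$ is a proper elliptic surface, and $X$ is obtained from $\X$ by contracting the configuration producing the single $(-4)$-curve of $\frac14(1,1)$; moreover the elliptic pencil on $\X$ descends, under the contraction and across the smoothing, to a pencil $|2K_Y+\sigma|$ on $Y$ (recall $h^0(2K_Y+\sigma)=2$ and $(2K_Y+\sigma)^2=4$, so this pencil has four base points counted with multiplicity). I would then run Hacking's construction on such an $X$ by hand: pull the canonical reflexive sheaf back to $\X$, identify it with an explicit rank~$2$ bundle assembled from the elliptic fibration and the exceptional $(-4)$-curve, and carry it through $\mathcal X\to\Delta$ to $Y$. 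Tracking $c_1$, the torsion part, and the cosupport of $E/\OO_Y$ along the way, I expect to recover precisely the extension of Step~1, with $\sigma$ the $4$-torsion class built into $X$ (equivalently, the one recording the monodromy of the elliptic fibration; note $\sigma$ and $-\sigma$ give $\sim$-equivalent bundles) and $P$ the image on $Y$ of the Wahl point, which lands on a base point of $|2K_Y+\sigma|$.

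\emph{Step 3: surjectivity.} It remains to see that, as $X$ ranges over the $\frac14(1,1)$ boundary component(s) of Theorem~\ref{cases} for $H_1(Y,\Z)=\Z/4\Z$, every pair $(\sigma,P)$ is realized. One argument is dimension-theoretic: such a component has codimension~$1$ in the deformation space of $Y$, so the family of pairs it sweeps out over the irreducible $8$-dimensional moduli of Godeaux surfaces with $H_1=\Z/4\Z$ has maximal dimension, and since exceptional bundles are rigid and unobstructed this family is essentially a finite cover of that moduli space and hence meets every fibre. The cleaner route is probably to reverse Hacking's construction: given $(\sigma,P)$ with $\ord\sigma=4$ and $P$ a simple base point of $|2K_Y+\sigma|$, build $\X$ — and then $X$ — directly by an explicit birational modification of $Y$ near $P$ adapted to $\sigma$ (for instance on the degree~$4$ torsion cover, where the pencil simplifies), verify that $X$ carries a $\frac14(1,1)$ singularity with $K_X$ ample and lies in the component of Theorem~\ref{cases}, and check that the construction returns exactly $E$. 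The main obstacle is precisely this identification: computing the limiting bundle from the elliptic surface and pinning down which base point of $|2K_Y+\sigma|$ is hit.
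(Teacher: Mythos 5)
Your proposal sets up a strictly harder problem than the theorem requires, and the part that would carry the real content (your Steps 2 and 3) is not actually executed. The equivalence relation $\sim$ of Remark~\ref{equiv} is defined purely on $c_1(E)$ modulo torsion, up to translation by $\rank(E)H^2(Y,\Z)$, sign, and monodromy; consequently any two $K_Y$-stable rank-2 exceptional bundles with $c_1\equiv K_Y$ modulo torsion are already equivalent to each other. So the theorem does \emph{not} ask you to realize every pair $(\sigma,P)$ by a degeneration --- it reduces to exhibiting a single $\frac14(1,1)$ degeneration $Y\leadsto X$ with $K_X$ ample whose Hacking bundle has $c_1=K_Y+\sigma$. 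Your Step~3 (surjectivity onto all pairs $(\sigma,P)$, via a dimension count or by reversing Hacking's construction near $P$) is therefore unnecessary, and as written it is also not a proof: the dimension-theoretic argument is heuristic, and you concede that the identification of the limiting bundle is ``precisely the main obstacle.''

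The missing idea is the divisibility criterion that replaces your proposed hand computation on the elliptic surface. The paper proceeds as follows: Proposition~\ref{p4} constructs the degeneration explicitly (as a $\Z/4\Z$-quotient of a complete intersection degenerating inside $\Pro(1^3,2^2)\leadsto\Pro(1^2,4^3)$); Theorem~\ref{c:thm:2d} shows that Hacking's construction yields a bundle with $c_1(E)=K_Y+\sigma$ if and only if $K_X+\sigma$ is 2-divisible in $H_2(X,\Z)$, by analyzing the local class group $\Z/4\Z$ of the singular point; and Proposition~\ref{c-2-div} verifies this 2-divisibility in case (a) of Theorem~\ref{cases} ($m_1=m_2=4$, the case occurring for $H_1(Y,\Z)=\Z/4\Z$) using the Kodaira canonical bundle formula, $K_{\X}=\frac12 A=2F_4$. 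Nothing in your proposal engages with this 2-divisibility mechanism, and without it Step~2 (``pull back the canonical reflexive sheaf, identify it with an explicit rank-2 bundle, track $c_1$ and the cosupport'') remains a plan rather than an argument. Your observation that the pencil $|2K_Y+\sigma|$ should be related to the elliptic fibration on $\X$ is a reasonable heuristic, but it is not used or needed in the paper's proof.
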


The paper is structured as follows. Section 2 deals with the classification of the degenerations of a Godeaux surface $Y$ to a surface $X$ with a unique singularity of Wahl type $\frac{1}{4}(1,1)$, such that the canonical divisor $K_X$ is ample. A complete classification is provided and some concrete examples are described in detail.

Section 3 contains the analysis of the equivalence classes of certain exceptional vector bundles of rank 2 on smooth Godeaux surfaces $Y$. The classification is provided modulo the equivalence relation arising from the construction of Hacking. A complete description is provided for one of the two equivalence classes. 

Section 4 analyzes the correspondence~(\ref{ch}) in the case $Y$ is a Godeaux surface and $X$ has a unique singularity of the type $\frac{1}{4}(1,1)$.

\section*{Acknowledgments}
I would like to thank my advisor Paul Hacking for suggesting this problem to me, and for his guidance and help. I am grateful to Valery Alexeev, Dustin Cartwright, Stephen Coughlan, Angela Gibney, Daniel Krashen, Julie Rana, and  Jenia Tevelev for many useful discussions.

\section{Wahl Degenerations}

We use the following notation. Let $Y$ be a Godeaux surface, and let $X$ be a $\Q$-Gorenstein degeneration of $Y$ such that $X$  has a unique  singularity, which is of Wahl type $\frac{1}{4}(1,1)$ and $K_X$ is nef.  We denote by $\X$ be the minimal resolution of $X$.  For a point $P$ on $X$ we write $(P \in X)$ to denote a small complex analytic neighborhood of $P \in X$. We use the notation $\Tors (Y) = \Tors H^2(Y, \Z) = \Tors H^2(Y) = \Tors H_1(Y) = H_1(Y)$.

\begin{prop}\label{2.1}
The surface $\X$ is a minimal properly elliptic surface, i.e. $\X$ is minimal of Kodaira dimension 1. 
\end{prop}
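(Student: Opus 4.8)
The plan is to compute the numerical invariants of $\X$ and then rule out all Kodaira dimensions other than 1. First I would relate the invariants of $\X$ to those of $Y$. Since $X$ is a $\Q$-Gorenstein smoothing of a Wahl singularity, we have $K_X^2 = K_Y^2 = 1$ and $\chi(\OO_X) = \chi(\OO_Y) = 1$; moreover the Milnor fiber of the smoothing of a $\frac{1}{n^2}(1,na-1)$ singularity is a $\Q$-homology ball, so $H^1$ and $H^{2,0}$ vanish for $X$ as they do for $Y$, hence $q(X) = p_g(X) = 0$. Now pass to the minimal resolution $\X \to X$. The Wahl singularity $\frac{1}{4}(1,1)$ is resolved by a single $(-4)$-curve $C$ (its continued-fraction expansion of $4/(4\cdot 1 - 1) = 4/3$ is $[2,2,2]$, wait — more directly, $\frac{1}{4}(1,1)$ has resolution graph a single vertex of self-intersection $-4$), so $K_{\X} = f^* K_X - \frac{1}{2} C$ as $\Q$-divisors, and $f^* K_X$ is nef with $(f^*K_X)^2 = 1$. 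From the adjunction/discrepancy computation one gets $K_{\X}^2 = K_X^2 - \frac{1}{2} = \frac{1}{2}$ — but $\X$ is a smooth projective surface, so $K_{\X}^2$ should be an integer once we also note $\X$ need not be minimal; I would instead argue $K_{\X}^2 \le 0$ after passing to a minimal model, or work directly with $\chi(\OO_{\X}) = \chi(\OO_X) = 1$ (rational singularities), $q(\X) = 0$, $p_g(\X) = 0$.

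Next I would pin down the Kodaira dimension. Since $\chi(\OO_{\X}) = 1$ and $q = p_g = 0$, the surface $\X$ is not of general type unless $K_{\X}^2 > 0$ on a minimal model; I would rule this out by exhibiting the $(-4)$-curve $C$ together with the nefness of $f^*K_X = K_{\X} + \frac12 C$: computing $K_{\X}\cdot C = -4 - (-4) \cdot$ hmm, $K_{\X} \cdot C + C^2 = 2g(C) - 2 = -2$ gives $K_{\X}\cdot C = 2$, and then $(f^*K_X)\cdot C = (K_{\X} + \frac12 C)\cdot C = 2 - 2 = 0$, consistent with $C$ being contracted. The key point is that $K_{\X}$ is not nef (it is negative on $C$ is false — it is positive on $C$), so I must be more careful: I would show $\X$ is not rational and not ruled by using $p_g = q = 0$ together with $K_X$ nef to see $K_{\X}$ is pseudo-effective (being $f^*K_X - \frac12 C$ with $f^*K_X$ nef and $C$ rigid), which already excludes negative Kodaira dimension via Castelnuovo-type criteria; and $\X$ cannot be a (blown-up) K3, Enriques, abelian, or bielliptic surface because $\chi(\OO_{\X}) = 1$ rules out K3, abelian, bielliptic, while $\chi = 1 \neq 0$ and $p_g = 0$ is compatible with Enriques only if $\chi(\OO) = 1$ — Enriques has $\chi = 1$! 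So the real work is distinguishing the Enriques/blown-up-Enriques case (Kodaira dimension 0) from the properly elliptic case. I would do this by a direct plurigenus estimate: compute $P_2(X) = h^0(2K_X)$ from $\chi(2K_X) = \chi(\OO_X) + \frac12 (2K_X)(2K_X - K_X) = 1 + K_X^2 = 2$ (Riemann-Roch on the stable surface, valid since $2K_X$ is Cartier for a Wahl singularity's index-dividing multiple — actually the index of $\frac14(1,1)$ is $2$, so $2K_X$ is Cartier), giving $P_2 \ge 2$, hence $\X$ has $P_2 \ge 2$ and positive Kodaira dimension, while $K_{\X}^2 = 0$ on the minimal model forces Kodaira dimension exactly 1.

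Putting it together: I would (i) establish $\chi(\OO_{\X}) = 1$, $q(\X) = p_g(\X) = 0$ via rationality of the Wahl singularity and the $\Q$-homology-ball property of the Milnor fiber; (ii) compute $P_2(\X) \ge 2$ using Riemann-Roch for $2K_X$ on $X$ and pulling back sections, which forces $\kappa(\X) \ge 1$; (iii) compute that a minimal model $\X'$ of $\X$ has $K_{\X'}^2 = 0$, using $(f^*K_X)^2 = 1$, $(f^*K_X)\cdot C = 0$, $C^2 = -4$, and the fact that contracting $C$ and then passing to a minimal model only decreases $K^2$ — more carefully, $K_{\X}^2 = \frac12$ is not an integer, signalling that $\X$ itself must be non-minimal with at least one $(-1)$-curve, and after blowing down enough $(-1)$-curves the self-intersection of the canonical class lands at $0$; (iv) conclude $\kappa(\X) = 1$, so $\X$ is properly elliptic, and finally observe $\X$ is already minimal because any $(-1)$-curve $E$ on $\X$ would satisfy $E \cdot f^*K_X = E\cdot(K_{\X} + \frac12 C) = -1 + \frac12 E\cdot C \ge 0$ forcing $E \cdot C \ge 2$, but then $f(E)$ is a curve through the singular point with $f(E)^2 \le -4 \cdot(\text{something})$, contradicting... — this last minimality step is where I expect the real subtlety, and I would handle it by the observation that $K_X$ ample (or nef with $K_X^2 = 1 > 0$) means $X$ has no $(-1)$-curves in the resolution disjoint from $C$, and any $(-1)$-curve meeting $C$ would after contraction violate minimality of the stable model $X$.

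The main obstacle is step (iii)-(iv): controlling the minimal model of $\X$ and proving $K_{\X}$ is already minimal (equivalently that $\X$ contains no $(-1)$-curves), since $\X$ is an explicit but not a priori minimal resolution, and the half-integer $K_{\X}^2 = \frac12$ coming from the $\frac14(1,1)$ discrepancy must be reconciled with $\X$ being an honest smooth surface — the resolution necessarily introduces curves beyond just the $(-4)$-curve, or the global geometry forces extra $(-1)$-curves, and tracking these precisely is the crux of the argument.
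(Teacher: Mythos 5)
There are three concrete problems with your proposal, the first of which derails its overall structure. From $K_{\X}=\pi^*K_X-\tfrac12 C$ one gets
$K_{\X}^2=(\pi^*K_X)^2+\tfrac14\,C^2=1-1=0$,
since the cross term vanishes ($C$ is $\pi$-exceptional) and $\tfrac14 C^2=-1$; the value is \emph{not} $K_X^2-\tfrac12=\tfrac12$. So there is no half-integer to reconcile, no forced extra $(-1)$-curves, and no need to pass to a minimal model before computing $K^2$ --- most of your steps (iii)--(iv) address a difficulty that does not exist. What \emph{does} need proof is that $\X$ is minimal, and here you start the right computation (a $(-1)$-curve $F$ must satisfy $\pi^*K_X\cdot F\ge 0$, hence $C\cdot F\ge 2$) but trail off. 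The paper's Lemma~\ref{lemma:minimal} finishes it: set $D=C+2F$; then $D^2=-8+4\,C\cdot F\ge 0$ and $K_{\X}\cdot D=2-2=0$, while $K_{\X}^2=0$ and $D$, $K_{\X}$ are linearly independent in $H^2(\X,\R)$, contradicting the Hodge index theorem. That is the missing idea.

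Second, your exclusion of Kodaira dimension $0$ via $P_2\ge 2$ fails. Riemann--Roch on $X$ does give $h^0(2K_X)\ge 2$, but $2K_{\X}=\pi^*(2K_X)-C$, so a section of $2K_X$ yields a section of $2K_{\X}$ only if it vanishes along $C$; this loses a dimension and gives only $P_2(\X)\ge 1$. In fact $P_2(\X)\ge 2$ is false in one of the cases that actually occurs: in case (e) of Theorem~\ref{cases}, $\X$ is a Dolgachev surface with multiple fibers of multiplicities $2$ and $3$, for which the canonical bundle formula gives $2K_{\X}\sim F_3$ and hence $P_2(\X)=1$. The paper's route avoids plurigenera entirely: once $\X$ is minimal with $K_{\X}^2=p_g=q=0$, the Enriques--Kodaira classification leaves only Enriques, K3, or properly elliptic, and the first two are ruled out because adjunction on the $(-4)$-curve gives $K_{\X}\cdot C=2$, so neither $K_{\X}$ nor $2K_{\X}$ is numerically trivial. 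Your computation of the invariants in step (i) matches the paper and is fine.
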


\begin{proof}
We start by  computing invariants of the surface $\X$. Notice that since $X$ is a $\Q$--Gorenstein  degeneration of $Y$, we have  $K_X^2=K_Y^2=1$. The exceptional locus $C$  of the minimal resolution of the cyclic quotient singularity of  type $\frac{1}{4}(1,1)$   consists of a single $(-4)$ curve.  

Let $\pi: \X\to X$. Then $K_{\X}= \pi^*K_X-\frac{1}{2}C$ by the  adjunction formula, so $K_{\widetilde X}^2 =0$.

Also  $p_g(\X)= h^2(\OO_{\X})=h^2(\OO_X)$ since $(P\in X)$ is a rational singularity, and $h^2(\OO_{X})=h^2(\OO_Y)=0$ since $(P\in X)$ is a quotient singularity by~\cite{DB81}, 4.6 and 5.3.
Similarly $q(\OO_{\X})= h^1(\OO_{\X})= h^1(\OO_Y)=0$.  Finally,  the fundamental group does not change on a resolution of a rational singularity, so  $\pi_1(\X)=\pi_1(X)$.

So $\widetilde X$ is a surface with  $K_{\widetilde X}^2=0$, $p_g(\widetilde X)=q(\X)= 0$. By Lemma~\ref{lemma:minimal} below $\X$ is a minimal surface. Thus according to the classification of minimal surfaces\cite[p. 244]{BPHV} , the surface $\X$ is either an Enriques surface, a K3 surface, or a properly elliptic surface.   
Note that $\X$ cannot be an Enriques surface or a K3 surface because there exists a curve  $C$ on $\X$ such  that $C\cdot K_{\X} =2$, thus $2K_{\X}$ and $K_{\X}$ are not numerically trivial.  Therefore the surface  $\X$ is  a properly elliptic surface.  
\end{proof}

\begin{lem}\label{lemma:minimal} The surface $\X$ does not contain a $(-1)$-curve. Thus $\X$ is not a rational surface or a surface of general type. 
\end{lem}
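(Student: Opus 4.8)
The goal is to show that $\X$ contains no $(-1)$-curve. Since $\X$ is the minimal resolution of the single $\frac{1}{4}(1,1)$ singularity of $X$, the exceptional locus is one $(-4)$-curve $C$, and $\pi^*K_X = K_{\X} + \frac{1}{2}C$. The first step is to record the basic positivity: $K_X$ is nef with $K_X^2 = 1 > 0$, so $K_X$ is nef and big on $X$; pulling back, $\pi^*K_X$ is nef and big on $\X$. I would then suppose for contradiction that $\Gamma \subset \X$ is a $(-1)$-curve and intersect it against $\pi^*K_X$.

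First I would show $\Gamma \cdot \pi^* K_X = 0$. Indeed $\Gamma \cdot \pi^*K_X = (\pi_* \Gamma) \cdot K_X \ge 0$ by nefness of $K_X$; if it were strictly positive then, combined with $K_X^2 > 0$, the Hodge index theorem on $X$ (or directly on $\X$, using that $\pi^*K_X$ has positive self-intersection) would be consistent, so I cannot get a contradiction that cheaply. Instead the key is the adjunction/genus computation: $\pi^*K_X \cdot \Gamma = (K_{\X} + \frac12 C)\cdot \Gamma = -1 + \frac12 (C\cdot \Gamma)$ by the adjunction formula $K_{\X}\cdot\Gamma = -1$ for a $(-1)$-curve. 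Since $\pi^*K_X \cdot \Gamma \ge 0$ is an integer combined with a half-integer $\frac12(C\cdot\Gamma)$, we get $C \cdot \Gamma \ge 2$, and in fact $C\cdot\Gamma$ must be even, say $C\cdot\Gamma = 2m$ with $m \ge 1$. Then $\pi^*K_X \cdot \Gamma = m - 1 \ge 0$.

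Next I would derive a contradiction by contracting $\Gamma$ or by a direct numerical inequality. Consider the image $D = \pi(\Gamma)$, an irreducible curve on $X$ (it is not contracted since $\Gamma$ is not the exceptional curve $C$). Because the singularity is $\frac{1}{4}(1,1)$, a curve through the singular point has a controlled discrepancy contribution: writing $\pi^*D = \widetilde D + \frac{k}{4}C$ where $\widetilde D = \Gamma$ is the strict transform and $k = C \cdot \Gamma \cdot (\text{something})$ — more precisely, the coefficient is determined by $C\cdot(\pi^*D) = 0$, giving $\frac{k}{4}\cdot(-4) + C\cdot\Gamma = 0$, i.e. $k = C\cdot\Gamma = 2m$. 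Then $\pi^*D \cdot \pi^*D = D^2$ gives $D^2 = \Gamma^2 + 2\cdot\frac{2m}{4}(C\cdot\Gamma) + \frac{4m^2}{16}C^2 = -1 + 2m^2 - m^2 = m^2 - 1$. Also $K_X \cdot D = \pi^*K_X \cdot \Gamma = m-1$. By the adjunction formula on the singular surface $X$ (with the local contribution of the cyclic quotient singularity $\frac14(1,1)$, whose delta-invariant-type correction for a curve passing once through it is $\frac{(4-1)}{4}\cdot(\dots)$ — I would look this up in terms of the $\mathbb{Q}$-Gorenstein adjunction), the arithmetic genus $p_a(D) = 1 + \frac12(K_X\cdot D + D^2) - (\text{correction}) $ must be a non-negative integer; plugging in $K_X \cdot D + D^2 = (m-1) + (m^2-1) = m^2 + m - 2$ and computing the correction from the $\frac14(1,1)$ point should force $m = 1$, and then a closer look (the curve $D$ would satisfy $K_X \cdot D = 0$, $D^2 = 0$, so $D$ is in the boundary of the nef cone, contradicting that $\pi^*K_X$ is big together with the structure of $\X$ as properly elliptic with $K_{\X}^2 = 0$ — or, more simply, $\Gamma$ would be a fiber component of the elliptic fibration and hence not a $(-1)$-curve).

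\textbf{Main obstacle.} The delicate point is the case $C\cdot\Gamma = 2$ (i.e. $m=1$), where $\pi^*K_X\cdot\Gamma = 0$: here I cannot contradict nefness directly, and I must use the finer geometry — that $\pi^*K_X$ being big forces any curve orthogonal to it to have negative self-intersection and sit in a connected negative-definite configuration together with $C$, whereas $\Gamma^2 = -1$ and $C^2 = -4$ with $C\cdot\Gamma = 2$ gives a configuration of determinant $4 - 4 = 0$, i.e. not negative-definite, a contradiction. Getting the sign conventions and the $\mathbb{Q}$-Gorenstein adjunction correction exactly right, and being careful that $\Gamma \ne C$ so that $\Gamma$ maps to a genuine curve, are the routine but error-prone parts; the conceptual core is the Hodge-index/negative-definiteness obstruction applied to $C \cup \Gamma$.
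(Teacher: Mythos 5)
Your opening step is exactly the paper's: from $0\le \pi^*K_X\cdot\Gamma=(K_{\X}+\tfrac12 C)\cdot\Gamma=-1+\tfrac12 C\cdot\Gamma$ you get $C\cdot\Gamma\ge 2$. And your treatment of the boundary case $C\cdot\Gamma=2$ is essentially correct and is secretly the paper's argument: the Gram matrix of $\{C,\Gamma\}$ is degenerate with radical spanned by $C+2\Gamma$, and a nonzero effective class of square zero orthogonal to the big and nef class $\pi^*K_X$ violates the Hodge index theorem.

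However, there are two genuine gaps in the remaining cases. First, your claim that $C\cdot\Gamma$ must be even is unjustified: $K_X$ is only $\Q$-Cartier (of index $2$), so $\pi^*K_X\cdot\Gamma=K_X\cdot\pi_*\Gamma$ is a priori only a half-integer, not an integer; $C\cdot\Gamma=3$, giving $\pi^*K_X\cdot\Gamma=\tfrac12\ge 0$, is perfectly consistent with nefness, so your parametrization $C\cdot\Gamma=2m$ silently discards odd values. Second, and more seriously, when $\pi^*K_X\cdot\Gamma>0$ your negative-definiteness argument no longer applies ($\Gamma\notin(\pi^*K_X)^\perp$), and the replacement you sketch --- adjunction on the singular surface $X$ with a correction term you would ``look up,'' asserted to ``force $m=1$'' --- is not carried out and is not obviously true; as written this case is open. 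The paper closes all cases uniformly and without any orbifold adjunction: set $D=C+2F$ (your radical element, but used for every value of $C\cdot F$). Then $D^2=-8+4\,C\cdot F\ge 0$, while $K_{\X}\cdot D=2+2(-1)=0$ and $K_{\X}^2=0$; after checking that $D$ and $K_{\X}$ are linearly independent (using $D\cdot F=0\ne K_{\X}\cdot F$ when $D^2=0$), the span of $D$ and $K_{\X}$ is a two-dimensional subspace on which the intersection form is positive semi-definite, contradicting the signature $(1,\rho-1)$. I recommend you replace your case analysis for $C\cdot\Gamma\ge 3$ by this single computation.
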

\begin{proof} Arguing by contradiction, suppose that there exists a $(-1)$-curve  $F$ on $\X$. 

Denote by   $\bar{F} = \pi(F)$ the image of the curve $F$ under the map $\pi$. 
Recall $K_{\X}= \pi^*(K_X)-\frac{1}{2}C$.   
Since the canonical divisor $K_X$ is big and nef, its intersection index with the curve $\bar F = \pi_*(F)$ on $X$ must be nonnegative. 
But 
 $$K_X\cdot\bar F= K_X\cdot \pi_*( F)= \pi^*(K_X)\cdot F= (K_{\widetilde X}+ \frac{1}{2} C)\cdot F=-1+\frac{1}{2}C\cdot F\geqslant 0,$$ 
thus $C\cdot F\geqslant 2$. 

Now consider the divisor  $D=   C+2F$ on $\X$. We have $D^ 2=  -8+ 4C\cdot F\geqslant 0$ since $C\cdot F\geqslant 2$.
But  $K_{\X}\cdot D =  0$, so $D\in K_{\X}^{\perp}$ and $K_{\X}^2=0$. 
Note that $D$ and $K_{\X}$ are linearly independent in $H^2(\X, \R)$. If $D^2 >$ 0 it is clear that $D$ and $K_{\tilde{X}}$ are linearly independent because $K_{\tilde{X}}^2=0$. If $D^2=0$ then $C \cdot F =2$ and we find $D \cdot F =0$, $K_{\tilde{X}} \cdot F =-1$, so again $D$ and $K_{\tilde{X}}$ are linearly independent.
This contradicts the Hodge Index Theorem. Therefore there is no such curve $F$.\end{proof}

\begin{thm}
\label{cases} The  surface  $\X$ is a properly elliptic surface over $\Pro^1$ with two multiple fibers of multiplicities $m_1, m_2$. In particular $\pi_1(\X)\simeq \Z/(m_1, m_2)\Z$. Write $n:= C\cdot A$, where $A$ is a general fiber of the elliptic fibration and $C$ is the exceptional locus of $\pi: \X\to X$, then we have the following possibilities for $m_1$, $m_2$ and $n$: 
\begin{enumerate}
\item[(a)] $m_1=4$, $m_2=4$, $n=4$;
\item [(b)]$m_1=3$, $m_2=3$, $n=6$;
\item [(c)]$m_1=2$, $m_2=6$, $n=6$;
\item [(d)]$m_1=2$, $m_2=4$, $n=8$;
\item [(e)]$m_1=2$, $m_2=3$, $n=12$.
\end{enumerate}

\end{thm}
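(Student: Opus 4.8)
The plan is to determine all the numerical invariants of the elliptic fibration on $\X$ from the canonical bundle formula together with the one curve we are handed --- the exceptional $(-4)$-curve $C$ of $\pi\colon\X\to X$ --- and then solve the resulting elementary Diophantine problem, invoking at the very end that $\pi_1(\X)$ is cyclic. To set up: by Proposition~\ref{2.1}, $\X$ is a minimal properly elliptic surface, so it carries an elliptic fibration $f\colon\X\to B$, which is relatively minimal since $\X$ has no $(-1)$-curve (Lemma~\ref{lemma:minimal}), and has $B\cong\Pro^1$ since $q(\X)=0$. Let $F$ be a general fibre and $m_1F_1,\dots,m_kF_k$ the multiple fibres ($m_i\geq2$). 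Since $\chi(\OO_{\X})=1$, the canonical bundle formula reads, numerically,
\[
K_{\X}\ \equiv\ \beta F,\qquad \beta:=-1+\sum_{i=1}^{k}\Bigl(1-\tfrac1{m_i}\Bigr),
\]
with $\beta>0$ because $\X$ is properly elliptic. Now $C$ is not contained in a fibre --- a component of a fibre of a relatively minimal elliptic fibration has self-intersection $\geq-2$, whereas $C^2=-4$ --- so $f(C)=\Pro^1$ and $n:=C\cdot F\geq1$, and adjunction ($p_a(C)=0$) gives $K_{\X}\cdot C=2$, hence $\beta n=2$.

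The extra ingredient is divisibility: each multiple fibre is a full fibre scheme-theoretically, so $m_iF_i\sim F$; since $C$ surjects onto $\Pro^1$ we have $C\cdot F_i\geq1$, and $m_i(C\cdot F_i)=C\cdot(m_iF_i)=n$ gives $m_i\mid n$ (so $2\leq m_i\leq n$) for every $i$. First dispose of small $n$: $n=1$ forces $k=0$, hence $\beta=-1<0$; $n=2$ forces every $m_i=2$, hence $k=4$ from $\beta n=2$, i.e.\ the configuration $(2,2,2,2)$, which is impossible because $\pi_1^{\mathrm{orb}}(\Pro^1;2,2,2,2)$ --- always a quotient of $\pi_1(\X)$ via $f$ --- has abelianisation $(\Z/2)^3$ and so is not cyclic, whereas $\pi_1(\X)=\pi_1(X)$, being a quotient of the cyclic group $\pi_1(Y)=H_1(Y)$ (the Milnor fibre of the Wahl singularity is a $\Q$-homology ball), is cyclic. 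For $n\geq3$ we have $\beta=2/n<1$, so $\sum_i 1/m_i=(k-1)-\beta>k-2$; combined with $\sum_i 1/m_i\leq k/2$ this forces $k\leq3$. The cases $k\leq1$ again give $\beta\leq0$; if $k=3$, then $1<\sum_i 1/m_i\leq3/2$ forces $n\in\{3,4\}$, and the constraints $m_i\mid n$ and $\sum_i 1/m_i=2-2/n$ leave only $(m_1,m_2,m_3;n)=(2,2,2;4)$, which is excluded since $\pi_1^{\mathrm{orb}}(\Pro^1;2,2,2)\cong(\Z/2)^2$ is not cyclic. Hence $k=2$.

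For $k=2$ the relation $\beta n=2$ becomes $\tfrac1{m_1}+\tfrac1{m_2}=1-\tfrac2n$; setting $p:=n/m_1$, $q:=n/m_2$ (positive integers, by divisibility), this reads $p+q=n-2$, while integrality of $m_1,m_2$ forces $p\mid(q+2)$ and $q\mid(p+2)$. Assuming $q\leq p$, the relation $p\mid(q+2)$ forces $p\leq q+2$, so $p\in\{q,q+1,q+2\}$; running through these three possibilities yields precisely the five triples $(m_1,m_2,n)$ of (a)--(e). Finally, $\pi_1(\X)\cong\Z/(m_1,m_2)\Z$ is the standard computation of the fundamental group of an elliptic surface over $\Pro^1$ with $q=0$ and two multiple fibres; equivalently, since $\pi_1(\X)=\pi_1(X)$ is a cyclic quotient of $\pi_1(Y)$ it is abelian, hence equals $H_1(\X,\Z)\cong\Z/(m_1,m_2)\Z$.

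The step I expect to be the main obstacle is the passage from the single equation $\beta n=2$, which by itself has infinitely many numerical solutions, to the finite list: this hinges on the divisibility $m_i\mid n$, and on eliminating the two parasitic numerical solutions $(2,2,2,2)$ and $(2,2,2)$, which genuinely requires the input that $\pi_1(\X)$ is cyclic --- a consequence of $\X$ resolving a $\Q$-Gorenstein degeneration of a Godeaux surface. Determining $\pi_1(\X)$ exactly in case (e), where $\gcd(m_1,m_2)=1$ so that $\X$ must be simply connected, likewise needs slightly more than the bare numerics and is covered by the same elliptic-surface computation.
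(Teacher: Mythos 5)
Your proposal is correct and follows essentially the same route as the paper: the Kodaira canonical bundle formula intersected with the $(-4)$-curve $C$ gives $\bigl(-1+\sum(1-\tfrac1{m_i})\bigr)n=2$, the divisibility $m_i\mid n$ reduces this to a finite list, and the two parasitic solutions $(2,2,2;4)$ and $(2,2,2,2;2)$ are excluded because $\pi_1(X)$ is a cyclic quotient of $\pi_1(Y)$ (the paper phrases this via the surjection $H_1(Y)\twoheadrightarrow H_1(X)$ with kernel $0$ or $\Z/2\Z$ from Hacking's notes, but it is the same fact). Your treatment is somewhat more explicit than the paper's in checking that $C$ is horizontal and in solving the Diophantine system, but the underlying argument is identical.
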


\begin{proof}
We have an elliptic fibration
$$
\begin{CD}
\X@<<< \mbox{\{multiple fibers with multiplicities } m_i\}\\
@ VfVV @ VVV\\
B@<<< \{P_i\}
\end{CD}
$$

Denote by $L$ the dual of the line  bundle $R^1f_* \OO_{\X}$ on $B$. By~\cite{FM}, Chapter~I, Lemma~3.18 we have $\deg L = \chi(\OO_{\X})=1$, since $p_g(\widetilde X) = q(\X)=0$.
Moreover, according to~\cite{FM}, Chapter~I, Proposition~3.22, we can compute the genus $g(B)$ of the base curve $B$ using the relation $p_g(\X) = \deg L + g(B)-1 =0$. Thus the base curve $B$ must be isomorphic to $\Pro^1$.

Since the Euler number $e(\X)>0$, we note that by~\cite{FM}, Chapter~II, Theorem~2.3 the fundamental group $\pi_1(\X)$ is isomorphic to the orbifold fundamental group of the base $B\simeq \Pro^1$, i.e.,we have an isomorphism:
$$\pi_1(\X) \simeq \frac{\pi_1(B\setminus\{P_1, \dots, P_r\})}{<\gamma_1^{m_1}, \dots, \gamma_r^{m_r}>},$$
where $\gamma_i$ are loops on the base $B$ of the fibration  about the corresponding points $P_i$.

Now we consider the Kodaira canonical bundle formula (\cite{Friedman}, Theorem~15).
 Let $f: \X\to B$ be a relatively minimal elliptic fibration. Suppose that $F_1, \dots, F_k$ are the multiple fibers of $f$ and that the multiplicity of $F_i$ is $m_i$. Then:
$$ \omega_{\X}= f^*(\omega_B\otimes L)\otimes \OO_{\X}\Big(\sum_i(m_i-1)F_i\Big).$$

 Thus we obtain the following formula for the canonical line bundle of $\X$. 
\begin{equation}
\label{cbf}
K_{\X}=  \Bigg(-1+\sum_i\Big(\frac{m_i-1}{m_i}\Big)\Bigg)A
\end{equation}
in $\Pic(\X)\otimes \Q\simeq H^2(\X, \Q)$,
where $A$ is a general fiber of the fibration. 

Recall that we have a $(-4)$--curve $C$ on $\X$, and that $C\cdot K_{\widetilde X}=2$.

Intersecting~(\ref{cbf}) with $C$ we obtain the equation $2 = (-1 + \sum \frac{m_i-1}{m_i})n$, or 
\begin{equation}
\label{eqn}
1+\frac{2}{n}=\sum_i\Big(\frac{m_i-1}{m_i}\Big).
\end{equation}

Since $n = C\cdot A = (C\cdot F_i)m_i$, we note that $m_i$ divides $n$. Given these conditions, 
 the only integer solutions of the equation~(\ref{eqn}) are: 
\begin{enumerate}
\item[(a)] $m_1=4$, $m_2=4$, $n=4$;
\item [(b)]$m_1=3$, $m_2=3$, $n=6$;
\item [(c)]$m_1=2$, $m_2=6$, $n=6$;
\item [(d)]$m_1=2$, $m_2=4$, $n=8$;
\item [(e)]$m_1=2$, $m_2=3$, $n=12$;
\item[(f)] $m_1=m_2=m_3=2$, $n=4$;
\item[(g)] $m_1=m_2=m_3=m_4=2$, $n=2$.
\end{enumerate}

Thus $\pi_1(\X)$ is Abelian in all cases except the last two. We have $H_1(\X) = \pi_1^{ab}(\X)$ so in the case (f) we can compute $H_1(\X) =\Z/2\Z\oplus \Z/2\Z$, and in the case (g) we have $H_1(\X) = \Z/2\Z\oplus \Z/2\Z\oplus \Z/2\Z$. 
According to~\cite{Ha13}, p.134 we have a surjection   $\phi: H_1(Y)\twoheadrightarrow H_1(X)$ and the kernel of the map $\phi$ is either $\Z/2\Z$ or trivial.  (Here $n=2$ for the singularity of Wahl type $\frac{1}{n^2}(1,na-1)$.)
Thus the last two cases cannot be obtained by a degeneration  of a Godeaux surface $Y$ to a surface $X$.
\end{proof}

\begin{remark} Since we have a map $\phi: H_1(Y)\twoheadrightarrow H_1(X)$, which kernel is either $\Z/2\Z$ or trivial, $H_1(Y) \simeq H_1(X)$ in the cases $H_1(Y) = \Z/5\Z$, $\Z/3\Z$, or trivial. We can have $\Z/2\Z\to H_1(Y)\to H_1(X)$ in the remaining two cases. However, the construction of Hacking requires $H_1(Y) \simeq H_1(X)$, so we will provide specific examples for which this condition is satisfied.
\end{remark}

The following statement is an immediate corollary of Theorem~\ref{cases}. 
\begin{cor} There does not exist a degeneration of a smooth Godeaux surface $Y$ with $H_1(Y)=\Z/5\Z$  to a surface $X$ with a  unique singularity $(P\in X)$ of Wahl type  $\frac{1}{4}(1,1)$, such that $K_{X}$ is ample.
\end{cor}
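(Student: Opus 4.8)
The plan is to read the constraint on $H_1(X)$ off the numerical classification of Theorem~\ref{cases} and play it against the behavior of $H_1$ under the degeneration. In each of the five surviving cases (a)--(e) of Theorem~\ref{cases}, the group $\pi_1(\X)\simeq \Z/(m_1,m_2)\Z$ is cyclic of order $4$, $3$, $2$, $2$, $1$ respectively; in particular $|\pi_1(\X)|\leqslant 4$. Since the resolution $\pi\colon \X\to X$ resolves only the rational singularity $(P\in X)$, we have $\pi_1(X)=\pi_1(\X)$, as recorded in Proposition~\ref{2.1}; this group being cyclic, $H_1(X)=\pi_1(X)$ is cyclic of order at most $4$.

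On the other hand, the proof of Theorem~\ref{cases} provides (following \cite{Ha13}) a surjection $\phi\colon H_1(Y)\twoheadrightarrow H_1(X)$ whose kernel is $\Z/2\Z$ or trivial. Now assume $H_1(Y)=\Z/5\Z$. Then $\ker\phi$ is a subgroup of $\Z/5\Z$ of order $1$ or $2$; as $\Z/5\Z$ has no subgroup of order $2$, the kernel is trivial and $\phi$ is an isomorphism, forcing $H_1(X)\simeq\Z/5\Z$. This contradicts the bound $|H_1(X)|\leqslant 4$ just obtained, so no degeneration of the stated type can exist.

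There is really no obstacle beyond this bookkeeping. The one point to note is that $K_X$ ample implies $K_X$ nef, so the standing hypotheses of this section are in force (and with $K_X^2=K_Y^2=1>0$, nef is equivalent to big and nef, as used in Lemma~\ref{lemma:minimal}); moreover the non-cyclic numerical solutions with three or four double fibers have already been discarded within Theorem~\ref{cases}, so the bound $|H_1(X)|\leqslant 4$ is available without any further argument.
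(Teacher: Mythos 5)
Your proof is correct and is exactly the deduction the paper intends: it reads off $|\pi_1(\X)|\leqslant 4$ from the five cases of Theorem~\ref{cases}, uses $\pi_1(X)=\pi_1(\X)$ and the surjection $H_1(Y)\twoheadrightarrow H_1(X)$ with kernel trivial or $\Z/2\Z$ (trivial here since $\Z/5\Z$ has no order-$2$ subgroup), and derives the contradiction. The paper states the corollary as immediate from Theorem~\ref{cases} and its following remark without writing this out, so your argument simply makes the same reasoning explicit.
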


We provide explicit constructions of degenerations $Y\leadsto X$ when $H_1(Y, \Z)$ is not equal to $\Z/5\Z$.

 \begin{prop} \label{p4}There exists a $\Q$-Gorenstein degeneration of a Godeaux surface $Y$ with $H_1(Y)= \Z/4\Z$  to a surface $X$ with a unique  singularity of type $\frac{1}{4}(1,1)$ such that $K_X$ is ample.
 \end{prop}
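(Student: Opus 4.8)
The plan is to construct $X$ directly as a $\Q$-Gorenstein smoothing problem and then verify that a general member of the smoothing is a Godeaux surface with $H_1 = \Z/4\Z$. By Theorem~\ref{cases}, the minimal resolution $\X$ of such an $X$ must be a properly elliptic surface over $\Pro^1$ whose only candidate with $H_1(\X) = \Z/4\Z$ (hence $\pi_1(\X) = \Z/4\Z$) is case (a): two multiple fibers of multiplicity $4$ and $n = C \cdot A = 4$. So first I would build $\X$ as a relatively minimal elliptic surface $f\colon \X \to \Pro^1$ with $\chi(\OO_{\X}) = 1$, $p_g = q = 0$, and two multiple fibers of multiplicity $4$; concretely one can take a suitable elliptic pencil on a rational elliptic surface (a $9$-point blowup of $\Pro^2$, say with two $I_0^*$ or $I_4$-type degenerate fibers arranged so that logarithmic transforms of multiplicity $4$ exist) and perform two logarithmic transformations. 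One then needs a $(-4)$-curve $C$ on $\X$ with $C \cdot A = 4$; since $A$ moves in the pencil and the $(-4)$-curves available on such $\X$ come from sections or from components of reducible fibers of the original rational elliptic surface, I would exhibit $C$ as (the strict transform of) such a curve meeting a general fiber four times — this is where the detailed geometry of the chosen pencil enters.

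Next I would contract $C$ to obtain $X$ with a single cyclic quotient singularity of type $\frac{1}{4}(1,1)$, and record that $K_X$ is $\Q$-Cartier with $K_X^2 = 0 \cdot$ (on $\X$) corrected by the discrepancy: $K_{\X} = \pi^* K_X - \tfrac12 C$ gives $K_X^2 = K_{\X}^2 + \text{(self-intersection of the fractional part)} = 0 + 1 = 1$ as in Proposition~\ref{2.1}. Then I would check $K_X$ is ample: since $K_{\X} = \lambda A$ with $\lambda = -1 + 2\cdot\tfrac34 = \tfrac12 > 0$ in $\Pic(\X)\otimes\Q$, the class $K_X = \pi_*K_{\X} + (\text{boundary correction})$ is a positive multiple of the fiber class pushed forward, so $K_X$ is nef and has positive self-intersection; ampleness then follows from the Nakai–Moishezon criterion once one checks $K_X \cdot D > 0$ for every curve $D$ not contracted by $\pi$, equivalently that no fiber component or section on $\X$ other than $C$ is orthogonal to $K_{\X}$ — which holds because $K_{\X}$ is a positive rational multiple of the (nef, primitive) fiber class $A$ and the only curves with $A \cdot D = 0$ are the vertical ones, all of which are handled by the logarithmic-transform structure and the fact that $C$ is the unique exceptional curve.

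The third step is the deformation-theoretic core: I must show that the unique Wahl singularity on $X$ admits a $\Q$-Gorenstein smoothing with no local-to-global obstruction, i.e.\ that $H^2$ of the sheaf of first-order $\Q$-Gorenstein deformations $\mathcal{T}^{\mathrm{QG}}_X$ vanishes (or more simply that $H^2(X, T_X^{\mathrm{QG}}) = 0$, or that the obstruction space vanishes). For this I would use the exponential/local-global spectral sequence together with the vanishing $H^2(\OO_X) = 0$ established in Proposition~\ref{2.1}, plus the known structure of the local smoothing of $\frac14(1,1)$ (one parameter), arguing as in Hacking's papers \cite{Ha11}, \cite{Ha13} and Lee–Park-type constructions that for surfaces with $p_g = 0$ the global obstruction vanishes. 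A general fiber $Y$ of the resulting smoothing then satisfies $K_Y^2 = K_X^2 = 1$, $p_g(Y) = p_g(X) = 0$ (by upper semicontinuity and $Y$ being of general type with $q = 0$), and $H_1(Y) = \Z/4\Z$ by the surjection $\phi\colon H_1(Y) \twoheadrightarrow H_1(X) = \Z/4\Z$ from \cite{Ha13} whose kernel is $\Z/2\Z$ or trivial — here I would rule out the $\Z/2\Z$ kernel by a direct count, e.g.\ by checking the torsion group is exactly as predicted using the Milnor fiber of the $\frac14(1,1)$ smoothing (whose $\mu = 0$, so $H_1$ is unchanged by the smoothing), giving $H_1(Y) \cong H_1(X)$.

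The main obstacle I expect is the \emph{explicit geometric construction} in step one: producing a rational elliptic surface with a pencil admitting two multiplicity-$4$ logarithmic transforms \emph{and} simultaneously carrying a $(-4)$-curve meeting the general fiber exactly four times, while keeping $K_X$ ample (no extra $(-2)$-curves orthogonal to $K_{\X}$ that would obstruct ampleness of $K_X$ on the contraction, and no worse singularities appearing). Verifying ampleness and the absence of local-to-global obstructions is then comparatively routine given Proposition~\ref{2.1} and the cited deformation theory, but the combinatorics of choosing the right pencil — and exhibiting $C$ inside it — is the step that requires genuine care, and I would likely present it via an explicit equation or an explicit configuration of blown-up points on $\Pro^2$.
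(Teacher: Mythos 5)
Your strategy --- build the minimal resolution $\X$ as an elliptic surface with two multiplicity-$4$ fibers and a $(-4)$-curve, contract, and then $\Q$-Gorenstein smooth --- is a legitimate route in principle (it is essentially how the paper handles the $H_1=\Z/2\Z$ and $H_1=0$ cases in Propositions~\ref{p0} and~\ref{KLP-cor}, by invoking \cite{LP} and \cite{KLP}), but as written it has two genuine gaps, and they are exactly the two hard steps. First, the assertion that the local-to-global obstruction vanishes ``for surfaces with $p_g=0$'' is false: $H^2(\OO_X)=0$ does not imply $H^2(T_X)=0$, and in the Lee--Park and Keum--Lee--Park constructions the vanishing $H^2(T_{X'})=0$ is a substantial computation tied to the specific configuration of curves on the resolution (one bounds $H^0$ of a twisted log tangent sheaf using the bisections and fiber components). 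Without this you do not know that $X$ admits any $\Q$-Gorenstein smoothing, so the proposition is not proved. Second, the construction of $\X$ itself --- an elliptic surface over $\Pro^1$ with $\chi=1$, two multiplicity-$4$ logarithmic transforms, and a $(-4)$-curve $C$ with $C\cdot A=4$ such that no vertical curve is disjoint from $C$ (which you need for ampleness of $K_X$, since $K_X\cdot \bar D=(\tfrac12 A+\tfrac12 C)\cdot D$ vanishes on any fiber component missing $C$) --- is deferred to ``the detailed geometry of the chosen pencil.'' That is most of the content. A smaller point: your Milnor-fiber argument for $H_1(Y)\cong H_1(X)$ is off --- the Milnor fiber of the $\Q$-Gorenstein smoothing of $\tfrac14(1,1)$ has $H_1=\Z/2\Z$, so the kernel of $H_1(Y)\twoheadrightarrow H_1(X)$ can a priori be $\Z/2\Z$, as the paper notes after Theorem~\ref{cases}; the correct fix is that once $Y$ is known to be a Godeaux surface, Reid's bound $|H_1(Y)|\leqslant 5$ together with the surjection onto $\Z/4\Z$ forces $H_1(Y)=\Z/4\Z$.

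The paper sidesteps both difficulties by going in the opposite direction: it starts from Reid's explicit description of the $\Z/4\Z$-cover $\overline Y$ as a complete intersection of two quartics in $\Pro(1^3,2^2)$ and degenerates the \emph{ambient space} via the family $(x_1x_3=x_2^2+tv_0)$, whose special fiber is $\Pro(1^2,4^3)$. The cover then degenerates to an explicit $\Z/4\Z$-invariant complete intersection of two degree-$8$ hypersurfaces meeting the singular locus of $\Pro(1^2,4^3)$ transversely in a single $\Z/4\Z$-orbit of four points, so the quotient $X$ has exactly one $\tfrac14(1,1)$ singularity, $K_X$ is ample by adjunction ($K_{\overline X}=2H|_{\overline X}$), and the smoothing comes for free as the family over $\A^1_t$ --- no obstruction theory and no elliptic-fibration combinatorics are needed. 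If you want to pursue your route, you would need to supply the explicit pencil and the $H^2(T_X)=0$ computation; otherwise the weighted-projective degeneration is the shorter path.
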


\begin{proof}

According to~\cite{Reid} the universal cover $\overline Y$ of a Godeaux surface $Y$ with $H_1(Y)=\Z/4\Z$  is given by a complete intersection of two quartics $\overline Y =\{q_0=q_2=0\} \subset\Pro(1^3,2^2)$ in a weighted projective space with coordinates $x_1, x_2, x_3, y_1, y_3$ of degrees 1, 1, 1, 2, 2, respectively. Then the Godeaux surface $Y$ is the quotient of $\overline Y$ by the $\Z/4\Z$ action generated by $x_i \mapsto \zeta^{i-1} x_{i}$, $y_i \mapsto\zeta^{i-2} y_i$, where $\zeta$ is a primitive 4th root of unity.    We will say that a variable $x_j$ has weight  $ j\in \Z/4\Z$  to indicate that  that $x_j \mapsto \zeta^j x_j$ under the $\Z/4\Z$ action.

We  consider the family $\mathcal P= ( x_1x_3 = x_2^2 + t v_0)\subset \Pro(1^3,2^3) \times \mathbb A^1_t$ with coordinates $x_1, x_2, x_3, v_0, y_1, y_3,t$ which have  weights $0$, $1$, $2$,  $2$, $3$, $1$, $0\in \Z/4\Z$ respectively.
Then if $t\neq 0$, we can solve for $v_0$ and so the general fiber $\mathcal P_t$ is isomorphic to $\Pro(1^3, 2^2)$.

The special fiber $\mathcal P_0$ is isomorphic to the projective space $\Pro(1^2,4^3)$ with coordinates $u_0, u_1, v_0, y_1, y_3$ with weights  $0$, $1$,  $2$, $3$, $1\in  \Z/4\Z$ via setting $x_1 = u_0^2$, $x_2 = u_0u_1$, $x_3=u_1^2$. Then $\overline Y\subset \mathcal  P_t$ degenerates to a complete intersection $\overline X\subset \Pro(1^2, 4^3)$, given by two equations of degree 8 and weights $0$ and $ 2$. 

We describe an example of a $\Z/4\Z$ invariant quasismooth complete intersection in $\Pro(1,1,4,4,4)$. We define  $\overline X=\{f_0=f_2=0\}$, where
\begin{equation}
\begin{aligned}
f_0&=u_0^8+u_1^8+u_0^4u_1^4+y_1y_3+v_0^2+v_0u_0^2u_1^2;\\
f_2&=u_0^6u_1^2+u_0^2u_1^6+y_1^2+y_3^2+u_0^4v_0+u_1^4v_0.
\end{aligned}
\end{equation}

Now $\overline X$ meets the locus $(u_0=u_1=0)\subset \Pro(1^2,4^3)$ transversely at exactly four points $(0, 0, 1, \pm \zeta, \mp\zeta)\in \Pro(1^2,4^3)$, where $\zeta^4 = -1$.
Thus $ \overline X$ has four $\frac{1}{4}(1,1)$ singularities and no other singularities, and its quotient $X=\overline X\big/(\Z/4\Z)$ has a unique $\frac{1}{4}(1,1)$ singularity.

Using the adjunction formula to compute the canonical divisor of $X$ we obtain  $K_{\overline X}= (-14 H + 8 H + 8 H )|_{\overline X} = 2 H|_{\overline X}$, where $H$ is a general hyperplane divisor on $\Pro(1^2, 4^3)$,
so $K_{X}= p_* K_{\overline X}$ is ample. 
\end{proof}

In the proof of Proposition~\ref{p4}, we constructed degeneration on the ambient weighted projective space. In contrast, we will now construct a degeneration in the $H_1(Y, \Z) = \Z/3\Z$ by choosing the equations which meet the singular locus of the weighted projective space.

\begin{prop} \label{p3}There exists a $\Q$-Gorenstein degeneration of a Godeaux surface $Y$ with $H_1(Y)= \Z/3\Z$ to a surface $X$ with a unique  singularity of type $\frac{1}{4}(1,1)$, such that $K_X$ is ample.
\end{prop}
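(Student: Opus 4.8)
The plan is to exhibit an explicit example, following the template of Proposition~\ref{p4}. By~\cite{Reid}, the universal cover $\overline Y$ of a Godeaux surface $Y$ with $H_1(Y) = \Z/3\Z$ is realized as a subvariety of a weighted projective space $\Pro$ carrying a free $\Z/3\Z$-action with $Y = \overline Y/(\Z/3\Z)$. I will construct a one-parameter family $\{\overline X_t\}_{t \in \A^1}$ of $\Z/3\Z$-invariant subvarieties of $\Pro$, of the same degrees as those cutting out $\overline Y$, such that the general member is a smooth $\Z/3\Z$-Godeaux cover while the central member $\overline X := \overline X_0$ has, as its only singularities, a free $\Z/3\Z$-orbit $\{p_1,p_2,p_3\}$ of points of type $\frac{1}{4}(1,1)$. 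Then $X := \overline X/(\Z/3\Z)$ has a unique $\frac{1}{4}(1,1)$ singularity, and dividing the family by the (fibrewise free) $\Z/3\Z$-action produces the required degeneration $Y \leadsto X$. It is $\Q$-Gorenstein: near the $p_i$ the total space is a complete intersection in the $\Q$-Gorenstein variety $\Pro \times \A^1$, and away from them the members carry only cyclic quotient singularities; no vanishing of local-to-global obstructions needs to be invoked, since the smoothing is produced by hand.

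The novelty relative to the $\Z/4\Z$ case is that the ambient weighted projective space $\Pro$ is kept fixed and it is the defining equations that move. Let $Z \subset \mathrm{Sing}(\Pro)$ be a two-codimensional stratum along which $\Pro$ is locally analytically $(\C^2,0) \times (\text{the } \frac{1}{4}(1,1) \text{ singularity})$ --- if Reid's $\Pro$ carries no such stratum, one first replaces it by a partial smoothing that does, exactly as $\Pro(1^3,2^2)$ was replaced by $\Pro(1^2,4^3)$ in Proposition~\ref{p4} --- and fix a free $\Z/3\Z$-orbit $\{p_1,p_2,p_3\} \subset Z$. I would write each defining equation as $f_i = f_i^{(0)} + t\, g_i$, where $f_i^{(0)}$ vanishes along the orbit and cuts out $\overline X$, and the $g_i$ are chosen so that for $t \neq 0$ the member $\overline X_t$ no longer meets $Z$. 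The conditions to verify are: (i) for $t \neq 0$, $\overline X_t$ is quasismooth, disjoint from $\mathrm{Sing}(\Pro)$, with free $\Z/3\Z$-action, hence a smooth $\Z/3\Z$-Godeaux cover; (ii) $\overline X$ is quasismooth with free $\Z/3\Z$-action away from the orbit; and (iii) at each $p_i$, $\overline X$ meets $Z$ transversally, so that the induced cyclic quotient structure there is exactly $\frac{1}{4}(1,1)$.

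Conditions (i) and (ii) are open and are checked by the Jacobian criterion on $\Pro$; condition (iii) is a first-order transversality statement verified by a direct local computation of the $\Z/4\Z$-quotient. Granting these, $K_X$ is ample by adjunction in $\Pro$: $K_{\overline X}$ is a positive multiple of the ample generator $\OO_{\overline X}(1)$, so $K_X = p_* K_{\overline X}$ is ample. The main obstacle --- as in Proposition~\ref{p4} --- is the explicit choice of $\Z/3\Z$-equivariant equations meeting the singular stratum in exactly the right way: producing a $\frac{1}{4}(1,1)$ singularity, neither milder nor worse, at each of the three orbit points, while keeping the general member quasismooth and the $\Z/3\Z$-action free everywhere else. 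Carrying this out is a finite but delicate verification in the weighted projective space.
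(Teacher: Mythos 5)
Your overall strategy matches the paper's: realize the $\Z/3\Z$-cover inside a weighted projective space, move the defining equations in a one-parameter family so that the central fibre acquires a free $\Z/3\Z$-orbit of three $\frac14(1,1)$ points while the general fibre stays a smooth cover, then quotient; and you correctly identify the key contrast with Proposition~\ref{p4}, namely that here the ambient space stays fixed and the equations are pushed into its singular locus. But there are two genuine gaps. First, this is an existence statement whose entire content is the explicit example, and you never produce one: you list conditions (i)--(iii) that an example would have to satisfy and then defer the ``finite but delicate verification.'' The paper carries out exactly that verification, using Reid's explicit format of nine equations for the $\Z/3\Z$-cover in $\Pro(1^3,2^3,3^3)$ with data $r_0,r_1,r_2$ (quadrics) and $S$ (a cubic), choosing a concrete one-parameter family of the $r_i$, and checking by direct computation that at $t=0$ the cover acquires exactly three $\frac14(1,1)$ points at the coordinate points of the $y$-plane and no other singularities, that these form a free $\Z/3\Z$-orbit, and that $K$ is ample because $K_Z=H|_Z$.

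Second, the local mechanism you propose is not available. You ask for a codimension-two stratum of $\mathrm{Sing}(\Pro)$ along which $\Pro$ has transverse $\frac14(1,1)$ singularities, to be met transversally by $\overline X$. In Reid's ambient $\Pro(1^3,2^3,3^3)$ no such stratum exists, since $4$ divides none of the weights; the stabilizers occurring are $\mu_2$, $\mu_3$ and $\mu_6$. In the paper's example the three singular points sit at the $\mu_2$-points $(y_j=1$, all other coordinates zero$)$, where a transverse quasismooth intersection could produce at worst a $\frac12$-quotient point; the $\frac14(1,1)$ arises instead from a \emph{tangential} (non-quasismooth) intersection of local analytic type $(uv=w^2)\subset\frac12(1,1,1)\cong\frac14(1,1)$ --- the same local picture as the equation $x_1x_3=x_2^2+tv_0$ in Proposition~\ref{p4}, but now located at a quotient point of a fixed ambient rather than created by degenerating the ambient. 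Your fallback of replacing $\Pro$ by a ``partial smoothing'' carrying a $\frac14(1,1)$-stratum is unsubstantiated and would change the ambient space, contradicting your own setup. As written, condition (iii) cannot be satisfied, so the local analysis must be replaced by the tangency mechanism before the construction can even be attempted.
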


\begin{proof} In the weighted projective space $\Pro(1^3,2^3,3^3)$ with coordinates $x_i$, $y_i$, $z_i$, where   $i \in \{0,1,2\}$,  consider equations
\begin{equation}
\label{Z3}
\begin{aligned}
&r_2x_1x_0-x_2z_2+y_1y_0=0\\
&x_1z_1-x_2r_1x_0-y_0y_2=0\\
& x_1(Sx_0-r_0y_0)-y_1r_1x_0-z_2y_2=0\\
&x_2(Sx_0-r_0y_0)- y_1z_1-y_2r_2x_0=0\\
&y_0(Sx_0-r_0y_0)-z_1z_2+r_1r_2x_0^2=0\\
&x_0z_0-y_1y_2-r_0x_1x_2=0\\
&y_0z_0-Sx_1x_2+r_2x_1y_2+r_1x_2y_1=0\\
&z_0z_1-Sx_2y_2-r_0r_1x_2^2+r_2y_2^2=0\\
 &z_0z_2-Sx_1y_1-r_0r_2x_1^2+r_1y_1^2=0
\end{aligned}
\end{equation}
Miles Reid showed in \cite{Miles2}  that any Godeaux surface $Y$ with $H_1(Y)= \Z/3\Z$ can be obtained by setting in (\ref{Z3}) $x_0+x_1+x_2=0$, $z_0+z_1+z_2=0$, and $r_i=$ quadratic, $S=$ cubic expression in $x_i$, $y_i$, with the $\Z/3\Z$-action given by cyclic permutation of $(0,1,2)$.  See the example 7.1 in \cite{Miles2}  for details. (See also \cite{Reid}, section 3.) Moreover, denote by $W\subset \Pro(1^3, 2^3, 3^3)$  the $\Z/3\Z$ cover of $Y$, then $K_W=H|_W$, where $H$ is a general hyperplane on $ \Pro(1^3, 2^3, 3^3)$.

We consider a one parameter family of Godeaux surfaces given by setting in the equations~(\ref{Z3})
$$
\begin{aligned}
&r_0=ty_0+ y_1+2y_2+x_1^2\\
&r_1=ty_1+ y_2+2y_0+x_2^2\\
&r_2=ty_2+  y_0+2y_1+x_0^2\\
& S= x_0^3+x_1^3+x_2^3,
\end{aligned}
$$
where $t\in \C^1$, along with $x_0+x_1+x_2=0$ and $z_0+z_1+z_2=0$. Then for each fixed small $t\neq 0$ after taking a quotient  by the group action we obtain a smooth Godeaux surface with $H_1= \Z/3\Z$.

Using Macaulay2, we can see that at $t=0$  the surface $Z$ defined by these equations has three $\frac{1}{4}(1,1)$ singularities at the points where $x_i=z_i=0$, $y_j=1$, $y_k=0$ for all $i$ and $j\neq k\in \{0,1,2\}$ and no other singularities. Direct calculation shows that all three of these singularities are $\frac{1}{4}(1,1)$ singularities. Thus the quotient by the  $\Z/3\Z$ action is  a surface with a unique singularity of type $\frac{1}{4}(1,1)$  which is a degeneration of a smooth Godeaux  surface. Also $K_Z= H|_Z$, so $K_X$ is ample.
\end{proof}

	A construction of a degeneration $X$ of a Godeaux surface $Y$ with a unique singularity of type $\frac{1}{4}(1,1)$  in the cases $H_1(Y)=\Z/2\Z$ and $H_1(Y)=0$   arises from the theory of $\Q$-Gorenstein smoothing for projective surfaces
with special quotient singularities.

\begin{prop} \label{p0}Let $Y$ be a simply connected Godeaux surface, whose construction is described in the ~\cite{LP}, Section 7, Construction A2. Then there is a degeneration $Y\leadsto X$, where $X$ has a unique singularity of type $\frac{1}{4}(1,1)$, and $K_X$ is nef.   Let $\X$ be a minimal resolution of $X$, then $\X$ is a Dolgachev surface, i.e. a properly elliptic surface over $\Pro^1$ with exactly  two multiple fibers of multiplicities 2 and 3.
\end{prop}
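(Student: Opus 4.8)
The plan is to take the simply connected Godeaux surface $Y$ of Lee--Park (\cite{LP}, Construction A2), recall that by construction it arises as a $\Q$-Gorenstein smoothing of an explicit singular surface $X_0$ obtained by a sequence of blowups and contractions starting from a rational (or elliptic) surface, and to identify inside the total space of that $\Q$-Gorenstein family a one-parameter subfamily whose special fiber $X$ has a single $\frac14(1,1)$ singularity rather than the full collection of Wahl singularities appearing in $X_0$. Concretely, the Lee--Park construction produces $X_0$ with several quotient singularities each admitting a local $\Q$-Gorenstein smoothing; because there are no local-to-global obstructions in this setting (the relevant $H^2$ of the tangent sheaf vanishes, as used throughout \cite{LP}), one can smooth the singularities independently, and in particular one can smooth all but one of them while keeping a chosen $\frac14(1,1)$ singularity. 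The resulting partial smoothing $X$ is then a $\Q$-Gorenstein degeneration $Y \leadsto X$ with a unique $\frac14(1,1)$ singularity.

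Next I would verify that $K_X$ is nef. Since $X$ is a $\Q$-Gorenstein degeneration of the minimal surface of general type $Y$, we have $K_X^2 = K_Y^2 = 1 > 0$, so $K_X$ is big; nefness then follows either from the fact that $K_X$ is a limit of the nef (indeed ample) divisors $K_{Y_t}$ in the family, or by checking directly on the explicit model that $K_X$ has nonnegative intersection with every curve. (I do not claim $K_X$ is ample, only nef, which is all that is asserted and all that Proposition~\ref{2.1} and Theorem~\ref{cases} require.)

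It then remains to identify the minimal resolution $\X$. By Proposition~\ref{2.1}, $\X$ is a minimal properly elliptic surface, and by Theorem~\ref{cases} it is elliptic over $\Pro^1$ with exactly two multiple fibers whose multiplicities $(m_1,m_2)$ and fiber-intersection number $n = C\cdot A$ appear in the list (a)--(e). Since $Y$ is simply connected, the remark following Theorem~\ref{cases} gives a surjection $H_1(Y) \twoheadrightarrow H_1(X) = \Z/(m_1,m_2)\Z$ with kernel of order dividing $2$; as $H_1(Y) = 0$ this forces $H_1(X) = 0$, i.e. $(m_1,m_2) = 1$. The only entry in the list (a)--(e) with $(m_1,m_2)=1$ is case (e), $m_1 = 2$, $m_2 = 3$. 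Hence $\X$ is a properly elliptic surface over $\Pro^1$ with exactly two multiple fibers of multiplicities $2$ and $3$, which is by definition a Dolgachev surface.

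The main obstacle I anticipate is the first step: extracting from \cite{LP} the precise structure of the degenerate surface $X_0$ and its $\Q$-Gorenstein smoothing family, and checking that a $\frac14(1,1)$ singularity actually occurs among the singularities of $X_0$ (Construction A2 uses several Wahl singularities, and one must confirm at least one is of type $\frac14(1,1)$) and that the remaining singularities can be simultaneously smoothed while leaving it intact. Once the family is in hand, the nefness of $K_X$ and the identification of $\X$ as a Dolgachev surface are essentially forced, the latter purely by the topological bookkeeping of Theorem~\ref{cases} together with simple connectivity of $Y$.
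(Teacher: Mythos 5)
Your construction of $X$ is exactly the paper's: smooth all but one $\frac{1}{4}(1,1)$ singularity of the Lee--Park surface $X'$, using $H^2(T_{X'})=0$ to rule out local-to-global obstructions so that the singularities deform independently; and your identification of $\X$ via Theorem~\ref{cases} (simple connectivity of $Y$ forces $H_1(X)=0$, hence $(m_1,m_2)=1$, hence case (e)) cleanly supplies a step the paper's proof leaves implicit. One caveat: your first justification of nefness --- that $K_X$ is a limit of the nef divisors $K_{Y_t}$ --- is not valid, since nefness does not pass to special fibers of a family (a curve $C\subset X$ with $K_X\cdot C<0$ need not deform to nearby fibers; this is precisely what happens when minimal surfaces degenerate to a non-minimal one). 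The nefness of $K_X$ should instead be inherited from the Lee--Park setup in the other direction: they prove $K_{X'}$ is nef, and $X$ is a fiber near $X'$ in the (partial) smoothing family, so the same openness argument they use for the general fiber $Y$ applies; your fallback of checking intersection numbers directly on the explicit model would also do. With that repair the argument matches the paper's.
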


\begin{proof}

The surface $X$ is obtained by smoothing all but one $\frac{1}{4}(1,1)$ singularity in the surface $X'$ in the construction  of  Lee and Park. Lee and Park give an example of a simply connected Godeaux surface using $\Q$--Gorenstein smoothing theory~\cite{LP}. They first construct a surface $X'$ having one cyclic quotient singularity of  type $\frac{1}{36}(1, 5)$ and two cyclic quotient singularities of each of the types $\frac{1}{4}(1,1)$ and $\frac{1}{16}(1,3)$ such that $K_{X'}$ is nef. They prove that $X'$ has a $\Q$--Gorenstein smoothing such that a general fiber of the family  is a simply connected, minimal, complex surface of general type with $p_g = 0$ and $K^2 = 1$. See~\cite{LP}, Section 7, Construction A2  for details on the construction, as well as the outline of the proof. 

In particular, since $H^2(T_{X'})=0$ by~\cite{LP}, the cyclic quotient singularities can be smoothed independently. 
So there exists a $\Q$-Gorenstein smoothing such that the deformation of one $\frac{1}{4}(1,1)$ singularity is trivial and the remaining singularities are smoothed.

Then the resulting surface $X$ is a $\Q$-Gorenstein degeneration of the Godeaux surface having a unique $\frac{1}{4}(1,1)$ singularity. 
\end{proof}

\begin{prop} \label{KLP-cor}Let $Y$ be a Godeaux surface with $H_1(Y) = \Z/2\Z$, whose construction is described in \cite{KLP}, Section 3, Example 1. Then there is a degeneration $Y\leadsto X$, where $X$ has a unique singularity of type $\frac{1}{4}(1,1)$, and $K_X$ is ample. The surface $X$ is obtained by smoothing all but one $\frac{1}{4}(1,1)$ singularity in the surface $X'$ in the construction  of Keum, Lee and Park. 
\end{prop}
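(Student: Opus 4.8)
The plan is to follow the same strategy as in Proposition~\ref{p0}: start from the construction of Keum, Lee and Park in \cite{KLP}, Section~3, Example~1, which produces a Godeaux surface with $H_1 = \Z/2\Z$ as a $\Q$-Gorenstein smoothing of a singular surface $X'$ with several special quotient singularities, at least two of which are of type $\frac{1}{4}(1,1)$. First I would recall the precise list of singularities of $X'$ from \cite{KLP} and the statement there that $K_{X'}$ is ample (or at least nef), together with the vanishing $H^2(T_{X'}) = 0$, which is what guarantees the local deformations of the singular points can be performed independently and with no local-to-global obstruction. This vanishing is the input that lets us smooth all but one of the $\frac{1}{4}(1,1)$ singularities while keeping the remaining one equisingular.

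The key steps, in order, are: (1) cite the construction and invariants of $X'$ from \cite{KLP}, in particular its singularity type and the ampleness of $K_{X'}$; (2) invoke $H^2(T_{X'}) = 0$ to conclude that the versal $\Q$-Gorenstein deformation space of $X'$ is smooth and is the product of the local deformation spaces of the singular points, so there is a one-parameter $\Q$-Gorenstein deformation $\mathcal X \to \Delta$ in which the chosen $\frac{1}{4}(1,1)$ singularity is held fixed and all other singularities are simultaneously smoothed; (3) let $X$ be a general fiber of this family — it then carries a unique $\frac{1}{4}(1,1)$ singularity and nothing else; (4) check that $K_X$ is ample. For step~(4), ampleness of the canonical class is an open condition in $\Q$-Gorenstein families with $K^2$ fixed (here $K^2 = 1 > 0$), so ampleness of $K_{X'}$ propagates to nearby fibers; alternatively one observes $K_X^2 = K_{X'}^2 = 1$ and that $K_X$ is nef by openness of nefness combined with $K^2>0$, which forces ampleness. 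Finally, one notes that $X$ is by construction a $\Q$-Gorenstein degeneration of the smooth Godeaux surface $Y$ obtained by smoothing the last singularity, so $Y \leadsto X$ is a $\frac{1}{4}(1,1)$ Wahl degeneration of the desired form.

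I expect the main obstacle to be step~(2): confirming that the relevant obstruction space $H^2(T_{X'})$ (or the appropriate $\Q$-Gorenstein analogue controlling the deformation functor) really vanishes for the \emph{specific} surface in \cite{KLP}, and that the argument in \cite{KLP} establishing smoothability is strong enough to yield independence of the local smoothings rather than merely existence of some global smoothing. If \cite{KLP} only asserts the latter, one would need to supplement it with the local-to-global spectral sequence argument (as in \cite{LP}) to split off the deformation of the distinguished $\frac{1}{4}(1,1)$ point; this is routine given the vanishing but must be stated carefully. A secondary point is bookkeeping on $H_1$: one should check via Theorem~\ref{cases} and the surjection $\phi: H_1(Y) \twoheadrightarrow H_1(X)$ that $H_1(X) = \Z/2\Z$ as well (so that case~(f) or~(g) of Theorem~\ref{cases} does not intervene), which here is immediate since $\ker\phi$ has order dividing $2$ and the resolution $\widetilde X$ must fall into one of the admissible cases of Theorem~\ref{cases} — and this in fact pins down $\widetilde X$ as one of the properly elliptic surfaces listed there.
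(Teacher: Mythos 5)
Your proposal matches the paper's proof: both cite the Keum--Lee--Park construction of $X'$ with its two $\frac{1}{4}(1,1)$ and two $\frac{1}{27}(1,8)$ singularities and $K_{X'}$ ample, and both use the vanishing $H^2(T_{X'})=0$ to smooth the singularities independently, keeping one $\frac{1}{4}(1,1)$ point fixed. Your additional remarks on openness of ampleness and the $H_1$ bookkeeping are finer-grained than what the paper records, but the argument is essentially the same.
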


\begin{proof}
A surface $X$ with $H_1(X) = \Z/2\Z$ and such that its minimal resolution is an elliptic fibration with two multiple fibers of multiplicities $2$ and $4$ can be obtained by smoothing all but one $\frac{1}{4}(1,1)$ singularity in~\cite{KLP}, Example 3.1.  

Keum, Lee and Park  obtain  a surface $X'$ which 
 has two cyclic quotient singularities of the type $\frac{1}{27}(1, 8)$ and two cyclic quotient singularities of type $\frac{1}{4}(1,1)$ such that $K_{X'}$ is ample. They prove that $X'$ has a $\Q$--Gorenstein smoothing such that a general fiber of the family is a minimal, complex surface of general type with $p_g = 0$, $K^2 = 1$, and $H_1= \Z/2\Z$. See \cite{KLP}, Section 3, Example 1 for details on the construction, as well as the outline of the proof. 

In particular, since $H^2(T_{X'})=0$~\cite{KLP}, the cyclic quotient singularities can be smoothed independently. 
So there exists a $\Q$-Gorenstein deformation such that the deformation of one $\frac{1}{4}(1,1)$ singularity is trivial and the remaining singularities are smoothed.
\end{proof}

\begin{remark}  Proposition \ref{prop-d-2div} together with Proposition \ref{c-2-div} imply that this degeneration $X$ corresponds to the case when the minimal resolution $\X$ has two multiple fibers of multiplicity $2$ and $4$. \end{remark}

\begin{remark} At the moment we are missing the construction of a surface $X$  in case $(c)$, i.e. such that the resolution $\X$ has two multiple fibers of multiplicities $2$ and $6$. Note that in this case the canonical line bundle $K_{\X}$ will be 2--divisible modulo torsion. We expect this degeneration to exist, and the construction of this degeneration would require an argument similar to~\cite{LP} or~\cite{KLP}.  \end{remark}

\begin{proof}[Proof of the Theorem \ref{intro-d}.]
By Proposition \ref{2.1} and Theorem \ref{cases}  the minimal resolution $\X$ of $X$ is a minimal elliptic surface with exactly two multiple fibers. In particular, there does not exist a degeneration $Y\leadsto X$ if $H_1(Y, \Z) = \Z/5\Z$. Propositions \ref{p4}, \ref{p3}, \ref{p0}, \ref{KLP-cor} provide the explicit constructions of degenerations in all other cases. Thus there is at least one $\frac{1}{4}(1,1)$ boundary component for each other $H_1(Y, \Z)$. 
\end{proof}

\begin{prop} \label{prop-d-2div}Let $Y\leadsto X$ be the degeneration of a smooth Godeaux surface $Y$ with $H_1(Y) = \Z/2\Z$ to a surface $X$ with a unique Wahl singularity of type $\frac{1}{4}(1,1)$ obtained in the Proposition~\ref{KLP-cor}. Then the divisor $K_X+\sigma$, where $0\neq \sigma \in \Tors H_2(X)$, is not 2-divisible in $H_2(X, \Z)$.
\end{prop}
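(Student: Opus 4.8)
The plan is to argue by contradiction and transfer the hypothetical $2$-divisibility to the minimal resolution $\X$, where it can be analysed through the elliptic fibration structure supplied by Proposition~\ref{KLP-cor} and Theorem~\ref{cases}. Recall from those results that $\X$ is the properly elliptic surface over $\Pro^{1}$ with two multiple fibres $2F_{1}$, $4F_{2}$ of multiplicities $2$ and $4$; that the exceptional $(-4)$-curve $C$ of $\pi\colon\X\to X$ satisfies $C\cdot A=8$, $C\cdot F_{2}=2$ and $C\cdot K_{\X}=2$, where $A$ is a general fibre with $A\sim 2F_{1}\sim 4F_{2}$; and that the Kodaira canonical bundle formula (cf.\ the proof of Theorem~\ref{cases}) gives $K_{\X}\sim -A+F_{1}+3F_{2}\sim F_{2}-\varepsilon$, where $\varepsilon:=F_{1}-2F_{2}$ is a $2$-torsion class. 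Since $h^{1}(\OO_{X})=h^{2}(\OO_{X})=h^{1}(\OO_{\X})=h^{2}(\OO_{\X})=0$, the exponential sequences give $\Pic(X)\cong H^{2}(X,\Z)$ and $\Pic(\X)\cong H^{2}(\X,\Z)$, so $\Tors\Pic(X)=\Tors\Pic(\X)=\Z/2\Z$; here $\sigma$ is the nontrivial $2$-torsion line bundle on $X$, and $\rho:=\pi^{*}\sigma$ generates $\Tors\Pic(\X)$.

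Suppose, for contradiction, that $2D\sim K_{X}+\sigma$ for some Weil divisor class $D$ on $X$ (as $h^{1}(\OO_{X})=0$, this is precisely what the proposition forbids). Because the Wahl singularity $\tfrac14(1,1)$ has index $2$, the class $K_{X}$ has local class $2\in\Z/4\Z=\Cl(P\in X)$, whereas $\sigma$ is Cartier; solving $2[D]_{\mathrm{loc}}=2$ in $\Z/4\Z$ forces $[D]_{\mathrm{loc}}\in\{1,3\}$, so $m:=\widetilde D\cdot C$ is odd, where $\widetilde D$ is the proper transform of $D$ on $\X$. Pulling back (every Weil divisor on the $\Q$-factorial surface $X$ is $\Q$-Cartier) and using $\pi^{*}K_{X}=K_{\X}+\tfrac12 C$ together with $\pi^{*}D=\widetilde D+\tfrac m4 C$, one obtains the integral relation
\begin{equation*}
2\widetilde D\;=\;F_{2}+(\rho-\varepsilon)+\tfrac{1-m}{2}\,C \qquad\text{in }\Pic(\X).
\end{equation*}
Writing $\tfrac{1-m}{2}=2s+r$ with $r\in\{0,1\}$ (so $r=0$ exactly when $m\equiv 1\bmod 4$) and setting $H:=\widetilde D-sC$, this reads $2H=F_{2}+(\rho-\varepsilon)+rC$.

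It remains to rule out each value of $r$. Using $F_{2}^{2}=0$, $F_{2}\cdot C=2$, $C^{2}=-4$, $K_{\X}\cdot F_{2}=0$, $K_{\X}\cdot C=2$, and that the torsion class $\rho-\varepsilon$ pairs to zero with everything, one computes $H^{2}=r(1-r)=0$ and $H\cdot K_{\X}=r$. If $r=1$, Riemann--Roch on $\X$ gives $\chi(\OO_{\X}(H))=\chi(\OO_{\X})+\tfrac12(H^{2}-H\cdot K_{\X})=1-\tfrac12=\tfrac12$, which is absurd. If $r=0$, then $2H=F_{2}+(\rho-\varepsilon)$; multiplying by $4$ and using $4F_{2}\sim A$ and $4(\rho-\varepsilon)\sim 0$ gives $8H\sim A$, so $8$ would divide the fibre class $[A]$ in $\Pic(\X)$, contradicting the fact that the fibre class of a relatively minimal elliptic surface has divisibility equal to the least common multiple of the multiplicities of its multiple fibres, here $\lcm(2,4)=4$ (see \cite{FM}). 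Hence no such $D$ exists, which proves the proposition.

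I expect the main obstacle to lie in the bookkeeping when passing to $\X$: one must track simultaneously the half-integral coefficient of $C$ in $\pi^{*}K_{X}$, the torsion line bundle $\rho$, and the torsion $\varepsilon$ concealed in the canonical bundle formula, and then check that, after absorbing the even part of $\tfrac{1-m}{2}\,C$, one is left with an honest integral class $H$ to which Riemann--Roch applies. The second substantive ingredient is the divisibility of the fibre class used in the case $r=0$; it is precisely this that distinguishes case~(d) of Theorem~\ref{cases}, where $K_{\X}$ (hence $K_{X}+\sigma$) is primitive modulo torsion, from case~(c), where it would be $2$-divisible, so Riemann--Roch alone cannot settle that case.
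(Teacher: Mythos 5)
Your argument has a genuine circularity. Everything rests on the opening assertion that the minimal resolution $\X$ of the Keum--Lee--Park degeneration is the elliptic surface with multiple fibers of multiplicities $2$ and $4$ (case (d) of Theorem~\ref{cases}). But Theorem~\ref{cases} only narrows the possibilities: for $H_1(Y)=\Z/2\Z$ both case (c) (multiplicities $2,6$) and case (d) (multiplicities $2,4$) are consistent, and in case (c) the class $K_X+\sigma$ \emph{is} $2$-divisible (Proposition~\ref{c-2-div}), so the proposition would be false. In the paper's own logical structure, the fact that the KLP degeneration falls into case (d) rather than case (c) is precisely what is \emph{deduced from} Proposition~\ref{prop-d-2div} together with Proposition~\ref{c-2-div} (see the remark following Proposition~\ref{KLP-cor}); it is not established independently by the KLP construction. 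Your closing paragraph even notes that Riemann--Roch cannot distinguish case (c), but you do not supply any argument ruling it out for this particular degeneration, so the substantive content of the proposition is assumed rather than proved.

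The paper takes a different route that avoids identifying the elliptic fibration on $\X$ at all: it passes to the more singular surface $X'$ of the KLP construction via the specialization map $sp\colon H_2(X,\Z)\to H_2(X',\Z)$ with $sp(K_X)=K_{X'}$, and then shows by an explicit lattice computation on the blown-up Enriques surface $\wZ$ (the curves $C_1,\dots,C_{14}$ in the special fiber, the bisections $S_1,S_2$, and the sublattices $A\subset N\subset L$ and $M$) that $K_{X'}$ is nonzero in $(N\otimes\Z/2\Z)/(M\otimes\Z/2\Z)$, hence not $2$-divisible in $\Cl(X')/\pi_*\Tors\wZ$. That computation is the real content here. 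Your calculation on $\X$ — granting multiplicities $(2,4)$ — is correct and is essentially a hands-on version of case (d) of Proposition~\ref{c-2-div}, but it cannot substitute for the step that pins down which case the KLP degeneration actually is.
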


\begin{proof} 
Let $\bar Y$ be the Enriques surface. Following~\cite{KLP}, we blow the surface $\bar Y$ up at five points to obtain the surface $\wZ $, such that the rank of the class group $\Cl \wZ $ is equal to 15. Let  $\pi: \wZ\to X'$ be the map contracting the four chains of $\Pro^1$'s.

We would like show that $K_X+ \sigma$ is not 2-divisible in $H_2(X,\Z)$.

Note that it is  sufficient to show that $K_{X'}$ is not 2-divisible in $H_2(X', \Z)/\Tors X'$. Indeed, we have the specialization map
$sp: H_2(X, \Z) \to H_2(X', \Z)$, such that $sp(K_X) = K_{X'}$.

Moreover, it suffices to show that $K_{X'}$ is not 2-divisible in $H_2(X', \Z)/\Tors X'$, since there is a surjective map $H_2(X',\Z)\twoheadrightarrow H_2(X', \Z)/\Tors X'$, mapping $(K_{X'}+\sigma)\mapsto [K_X']$.   So we will consider everything up to a torsion element. Write $L = \Cl \wZ/\Tors \wZ$.
On $\wZ$ denote the curves in the fiber by $C_1, \dots, C_{14}$, and the two bisections by $S_1, S_2$ as  shown in Figure 1.

\begin{figure}
\caption{Special fiber on $Z$.}
\label{Fig1}
  \centering
\includegraphics[width=0.5\textwidth]{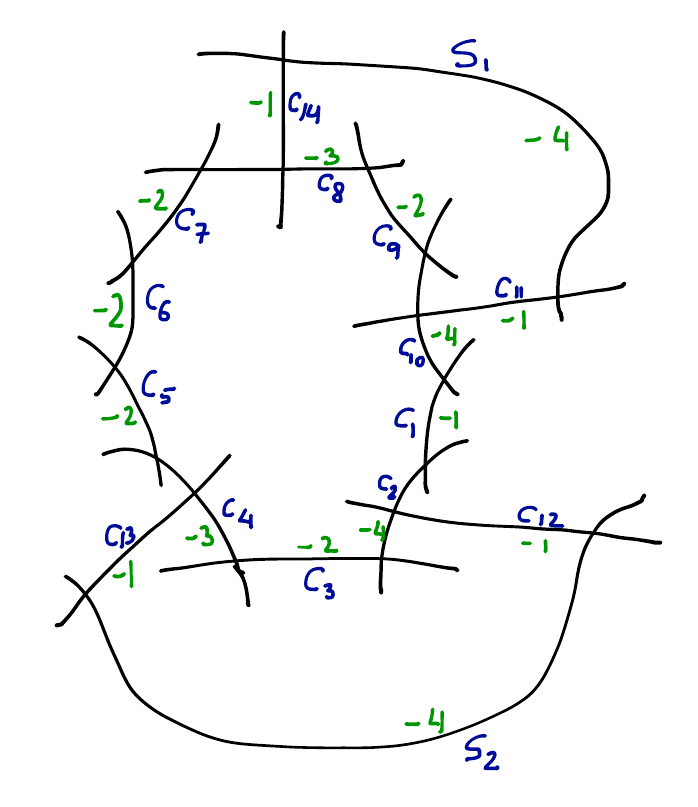}
\end{figure}

Note that $C_1, \dots, C_{14}$ are linearly independent in $L$ since they form a fiber of the elliptic fibration, and denote by $A= \langle C_1, \dots, C_{14}, S_1\rangle$. Then $A$ has rank 15, so it has  finite index in $L$. Using the intersection matrix for $A$ we compute that the index $|L/A|=6$.

Let $M = \langle C_2, \dots, C_5, C_7, \dots, C_{10}, S_1, S_2\rangle$ be the set of all $\pi$-exceptional divisors on $\wZ$, then we need to show that $K_{X'}$ is not 2-divisible in $\Cl(X')/\pi_*(\Tors\wZ) = L/M$ 

Let $G = (1/2) F$, where  $F$ is a general fiber of the fibration $\widetilde Z\to \Pro^1$. Note that $G$ is an element of $L$ since $\wZ$ has two  multiple fibers of multiplicity 2 because it is a blowup of an  Enriques surface \cite{BPHV}, Chapter VIII, Lemma 17.1.

 Consider $N= A+ \Z\cdot G$, then we have $A\subset N\subset L$.  Note that the containments are strict as we have $G \not\in A$. To show this, note that $G= (1/2) F\in F^{\perp}$ since $F^2 =0$. 
  But $G\not\in A\cap F^{\perp}= \langle C_1, \dots, C_{14}\rangle$, since $G = \frac{1}{2}(2C_1+ C_2 + \dots + C_{14})$.

Thus we have $|N/A|=2$, so $|L/ N|=3$.   Moreover, a basis of $N$ is given by $\langle C_1, \dots, C_{13}, G, S_1\rangle$.

 Unfortunately $M\not\subset N$, but since $|L/N|=3$, we have $N\otimes \Z/2\Z\simeq L \otimes \Z/2\Z$.
   So it is enough to show that $K_{X'}$ is nonzero in $(N\otimes \Z/2\Z)/ (M\otimes \Z/2\Z)$ to conclude that it is not 2-divisible in $L/M$.

On $\wZ$ we have $S_i\cdot F=2$,  $i = 1,2$, and $C_i \in F^{\perp}$. 
Note that $S_1-S_2\in F^{\perp}$, so we can write $S_1-S_2 = \sum _{i=1}^{14}x_iC_i$ for some $x_i \in \Q$. Using  the intersection matrix  $(C_i\cdot C_j)$ we can compute the vector 

$\mathbf x = (x_1, \dots , x_{14})  =\frac{-1}{3}(14, 5, 4, 3, 5, 7, 9, 11, 10, 9, 12, 2, 14, 0) + \lambda/6 (2, 1, \dots, 1)$ for some $\lambda\in \Z$, since  we can write the general fiber $F = 2G=\sum_{i=1}^{14}m_i C_i$, where $m_1=2$ and $m_i = 1$ for all $i\neq 2$. 

Thus modulo 2 we have $S_1-S_2 = C_2+ C_4+C_5+C_6+C_7+C_8+C_{10} + \mu G\in N$, for some $\mu \in \Z$.

So $$\frac{N\otimes \Z/2\Z}{M\otimes \Z/2\Z }= \frac{\langle C_1, \dots, C_{13}, G, S_1\rangle\otimes\Z/2\Z }{ \langle C_2, \dots, C_5, C_7, \dots, C_{10}, S_1,S_1- S_2\rangle\otimes\Z/2\Z} = $$
$$=\frac{\langle C_1, C_6, C_{11}, C_{12}, C_{13}, G\rangle}{\langle C_6+ \mu G\rangle}\otimes \Z/2\Z.$$

We have $K_{\widetilde Z} = p^* K_{\hat Y} + (C_1 + C_{11}+ C_{12}+ C_{13}+ C_{14}) = C_1 + C_{11}+ C_{12}+ C_{13}+ C_{14}$ in $L$.  Then $K_{X'} = \pi _* K_{\widetilde Z}\in \Cl(X') /\pi_*\Tors \widetilde Z= L/M$, so $K_{X'} = C_1 + C_{11}+ C_{12}+ C_{13}+ C_{14}$. 

Finally in $(N\otimes \Z/2\Z)/(M\otimes \Z/2\Z )\simeq (\Z/2\Z)^6/\langle0,1,0,0,0,\mu\rangle$ we have $K_{X'}\equiv 2G-C_1-C_6$, so $K_{X'}\equiv C_1+C_6$. Thus $K_{X'}$ is nonzero in $(N\otimes \Z/2\Z)/(M\otimes \Z/2\Z)$, so it is not divisible in $H_2(X', \Z)$.\end{proof}


\section{Exceptional Vector Bundles}

In this section we study $K_Y$ stable exceptional vector bundles of rank 2 on smooth Godeaux surfaces $Y$.

\begin{defin}
An {\it exceptional vector bundle}  $E$ on a surface $Y$ is a holomorphic vector bundle such that $\Hom (E, E) =\C$ and $\Ext^1(E, E) = \Ext^2(E, E) =0$.  
\end{defin}

\begin{defin} The {\it slope} of a vector bundle $E$ of rank $r$   on a surface $Y$  with respect to an ample line bundle $H$ is 
\begin{equation}
\mu(E)= \frac{c_1(E)\cdot H}{r}.
\end{equation}
\end{defin}

\begin{defin} A vector bundle $E$ on a surface $Y$  is called {\it stable with respect to an ample divisor}  $H$ if for every  vector bundle $F\hookrightarrow E$, such that $0<\rank(F)<\rank(E)$  we have $\mu(F)<\mu(E)$. 
\end{defin}

\begin{defin} Let $E$ and $F$ be two vector bundles on a smooth projective surface $Y$. Define
$$\chi(E, F) =\sum_{i=0}^2(-1)^i\Ext^i(E,F).$$ 
\end{defin}

We will use the following two facts. 
\begin{lem} \label{intro:thm:exc}Let $E$ be a vector bundle on a smooth projective surface $Y$. Then $E$ is exceptional if and only if the dual vector bundle $E^{\vee}$ is exceptional. Moreover, for any line bundle $L$ on $Y$ we have $E$ is exceptional if and only if $E\otimes L$ is exceptional.
\end{lem}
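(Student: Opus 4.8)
The plan is to prove Lemma~\ref{intro:thm:exc} by translating each defining condition of ``exceptional'' into a statement about $\Ext$-groups and then applying two standard dualities: Serre duality on the surface $Y$, and the natural isomorphism $\Ext^i(E,F)\simeq \Ext^i(E\otimes L, F\otimes L)$ for any line bundle $L$.

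\medskip

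First I would record the two elementary observations that drive everything. For any coherent sheaves $E,F$ and any line bundle $L$, tensoring by $L$ is an exact autoequivalence of the category of $\OO_Y$-modules, hence induces isomorphisms
\begin{equation*}
\Ext^i_Y(E,F)\;\simeq\;\Ext^i_Y(E\otimes L,\,F\otimes L)\qquad (i\geqslant 0).
\end{equation*}
In particular, taking $F=E$ gives $\Ext^i(E,E)\simeq\Ext^i(E\otimes L,E\otimes L)$ for all $i$, so $E$ is exceptional if and only if $E\otimes L$ is. That disposes of the second assertion with essentially no work. Second, for a vector bundle $E$ on a smooth projective surface, $\cHom(E,E)\simeq E^{\vee}\otimes E\simeq \cHom(E^{\vee},E^{\vee})$ as sheaves (both are the endomorphism bundle $\End E$), and more generally $\Ext^i(E,E)\simeq H^i(Y,\End E)\simeq H^i(Y,\End E^{\vee})\simeq \Ext^i(E^{\vee},E^{\vee})$, because $\End E$ and $\End E^{\vee}$ are canonically isomorphic vector bundles via $\phi\mapsto \phi^{\vee}$ (equivalently, dualizing the identification $\End E\simeq E^{\vee}\otimes E$ and using $(E^{\vee})^{\vee}=E$).

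\medskip

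Concretely, the key steps in order: (1) note $\Ext^i(E,E)=H^i(Y,E^{\vee}\otimes E)$ since $E$ is locally free (local $\mathcal Ext$ sheaves vanish, so the local-to-global spectral sequence degenerates); (2) observe $E^{\vee}\otimes E\simeq (E^{\vee})^{\vee}\otimes E^{\vee}$ canonically, whence $H^i(Y,E^{\vee}\otimes E)\simeq H^i(Y,(E^{\vee})^{\vee}\otimes E^{\vee})=\Ext^i(E^{\vee},E^{\vee})$; (3) conclude that the three defining conditions $\Hom(E,E)=\C$, $\Ext^1(E,E)=0$, $\Ext^2(E,E)=0$ hold for $E$ if and only if they hold for $E^{\vee}$, which is the first assertion; (4) for the line-bundle statement, invoke the tensor-autoequivalence isomorphism displayed above with $F=E$. (Alternatively one can route step (2) through Serre duality, $\Ext^2(E,E)\simeq\Hom(E,E\otimes K_Y)^{\vee}$ and $\Ext^1(E,E)\simeq\Ext^1(E,E\otimes K_Y)^{\vee}$, but the direct $\End E\simeq\End E^{\vee}$ comparison is cleaner and avoids a twist by $K_Y$ that would not obviously match up.)

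\medskip

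I do not anticipate a genuine obstacle here: the statement is formal, and the only point requiring a word of care is the canonicity of the isomorphism $\End E\simeq \End E^{\vee}$ and the verification that it identifies the identity endomorphism with the identity — this is what guarantees $\Hom(E,E)=\C\cdot\mathrm{id}$ transports to $\Hom(E^{\vee},E^{\vee})=\C\cdot\mathrm{id}$ rather than merely matching the one-dimensionality. So the ``hard part,'' such as it is, is simply being precise that these isomorphisms are induced by the dualization functor on the bundle $E$ and are compatible with composition, so that all three cohomological conditions transfer simultaneously.
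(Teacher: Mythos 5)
Your argument is correct and is essentially the paper's own proof: both reduce all three conditions to the cohomology of the endomorphism sheaf via $\Ext^i(E,E)=H^i(\cHom(E,E))$ and then observe that $\cHom(E\otimes L,E\otimes L)=\cHom(E,E)=\cHom(E^{\vee},E^{\vee})$. Your extra remarks on canonicity and compatibility with the identity are a reasonable refinement but not a different method.
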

\begin{proof} We have $\Ext^i(E, E)= H^i(\cHom (E,E))$, as well as $\cHom(E\otimes L, E\otimes L) = \cHom (E,E)$, also $\cHom(E^{\vee}, E^{\vee})= \cHom (E, E)$.\end{proof}

\begin{lem} \label{thm:c2} Let $Y$ be a smooth surface with $\chi(\OO_Y)=1$.
Let $E$ be an exceptional vector bundle of rank $n$ on $Y$.  Then 
\begin{equation}\label{c2}
c_2(E) = \frac{n-1}{2n}(c_1(E)^2+n+1)
\end{equation}
\end{lem}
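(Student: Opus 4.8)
The plan is to use the Riemann--Roch theorem together with the defining vanishing of an exceptional bundle. Since $E$ is exceptional, $\chi(E,E) = \dim\Hom(E,E) - \dim\Ext^1(E,E) + \dim\Ext^2(E,E) = 1 - 0 + 0 = 1$. So the whole computation reduces to expressing $\chi(E,E)$ in terms of the Chern classes of $E$ and setting it equal to $1$.

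First I would write down the Hirzebruch--Riemann--Roch formula for $\chi(E,E) = \chi(E^\vee \otimes E) = \chi(\cHom(E,E))$ on the surface $Y$. Using $\mathrm{ch}(E^\vee \otimes E) = \mathrm{ch}(E^\vee)\,\mathrm{ch}(E)$ and the Todd class $\mathrm{td}(Y) = 1 - \tfrac12 K_Y + \chi(\OO_Y)$, one gets a formula of the shape
\begin{equation*}
\chi(E,E) = \int_Y \mathrm{ch}(E^\vee)\,\mathrm{ch}(E)\,\mathrm{td}(Y).
\end{equation*}
Expanding $\mathrm{ch}(E) = n + c_1(E) + \tfrac12(c_1(E)^2 - 2c_2(E))$ and the analogous expression for $\mathrm{ch}(E^\vee)$ (obtained by $c_1 \mapsto -c_1$, $c_2 \mapsto c_2$), the degree-$4$ part of the product picks up contributions from $\mathrm{ch}_0 \cdot \mathrm{ch}_2^\vee + \mathrm{ch}_1 \cdot \mathrm{ch}_1^\vee + \mathrm{ch}_2 \cdot \mathrm{ch}_0^\vee$ paired with $\mathrm{td}_0 = 1$, plus a contribution of $\mathrm{ch}_0 \cdot \mathrm{ch}_0^\vee = n^2$ paired with $\mathrm{td}_2 = \chi(\OO_Y)$, while the $K_Y$ term of the Todd class contributes nothing because $c_1(E^\vee \otimes E) = 0$. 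The upshot is the standard identity
\begin{equation*}
\chi(E,E) = n^2\chi(\OO_Y) + \bigl(2n\,c_2(E) - (n-1)c_1(E)^2\bigr)\cdot(-1) + \bigl(2n\,c_2(E) - (n-1)c_1(E)^2\bigr)\cdot 0,
\end{equation*}
more precisely $\chi(E,E) = n^2\chi(\OO_Y) - (2n\,c_2(E) - (n-1)c_1(E)^2)$; I would double-check the sign and coefficient by specializing to $E = \OO_Y$ (giving $\chi(\OO_Y, \OO_Y) = \chi(\OO_Y)$, consistent with $n=1$, $c_1 = c_2 = 0$).

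Then I would substitute $\chi(\OO_Y) = 1$ (the hypothesis) and $\chi(E,E) = 1$ (exceptionality), obtaining $1 = n^2 - 2n\,c_2(E) + (n-1)c_1(E)^2$, i.e. $2n\,c_2(E) = n^2 - 1 + (n-1)c_1(E)^2 = (n-1)(n+1) + (n-1)c_1(E)^2 = (n-1)(c_1(E)^2 + n + 1)$, which rearranges exactly to the claimed formula $c_2(E) = \frac{n-1}{2n}(c_1(E)^2 + n + 1)$.

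The only real obstacle is bookkeeping: getting the signs and the factor of $2n$ in the Riemann--Roch expansion exactly right, since $\mathrm{ch}_2(E) = \tfrac12(c_1(E)^2 - 2c_2(E))$ and the cross-terms $\mathrm{ch}_1(E)\cdot\mathrm{ch}_1(E^\vee) = -c_1(E)^2$ both feed into the $c_1^2$ coefficient. I would pin this down with the sanity check $E = \OO_Y$ and, if desired, a second check with $E$ a line bundle $L$, where $\chi(L,L) = \chi(\OO_Y)$ must again come out independent of $c_1(L)$. Once the coefficients are verified, the proof is a two-line substitution.
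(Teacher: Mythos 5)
Your proposal is correct and follows exactly the paper's argument: exceptionality gives $\chi(E,E)=1$, Hirzebruch--Riemann--Roch gives $\chi(\End E)=n^2\chi(\OO_Y)+(n-1)c_1(E)^2-2nc_2(E)$, and solving for $c_2(E)$ with $\chi(\OO_Y)=1$ yields the formula. The extra detail you supply on expanding the Chern characters is just the verification the paper leaves implicit.
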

\begin{proof} On one hand we have $\chi(E, E) = \Hom(E, E) - \Ext^1(E, E) + \Ext^2(E, E) = 1$, since $E$ is exceptional. Also by the Hirzebruch--Riemann--Roch formula we have 
$$1=\chi(\mathcal End E) =n^2\chi(\OO_Y)+ (n-1)c_1(E)^2 -2nc_2(E).$$
We can get the formula~(\ref{c2}) by solving this equation for $c_2(E)$.
\end{proof}

In \cite{Ha11} Hacking describes a way to construct exceptional vector bundles of rank $n$ on a
smooth surface $Y$ such that  $H^1(\OO_Y)=H^2(\OO_Y)=0$ using degenerations $Y\leadsto X$ of $Y$ to a surface $X$ with
a unique singularity of Wahl type $\frac{1}{n^2}(1, na - 1)$. We quote the following theorem of Hacking.

\begin{thm}[\cite{Ha11}, Thm. 1.1]\label{thm:hacking} Let $\mathcal X/(0\in S)$ be a one parameter $\Q$-Goren\-stein smoothing of a normal projective surface $X$ with a unique singularity $(P\in X)$ of Wahl type $\frac{1}{n^2}(1, na-1)$. Let $Y$ denote a general fiber of $\mathcal X/(0 \in S)$. Assume that $H^1(\OO_Y)=H^2(\OO_Y)=0$ and the map $H_1(Y, \Z)\to H_1(X, \Z)$ is injective.
Then after a finite base change $S'\to S$ there is a rank $n$ reflexive sheaf $\mathcal E$ on $\mathcal X'$ such that  \item $E:=\mathcal E|_Y$ is an exceptional bundle on $Y$.
The topological invariants of $E$ are given by:  $\rank(E)=n$,  
${c_1(E)\cdot K_Y=\pm a \mod n}$, and $c_2(E) = \frac{n-1}{2n}(c_1(E)^2+n+1)$. 

If $\mathcal H$ is an ample line bundle on $\mathcal X$ over $S$, then $E$ is slope stable with respect to $\mathcal H|_Y$. 

\end{thm}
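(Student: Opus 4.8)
The plan is to manufacture $\mathcal E$ from a universal local model of the $\Q$-Gorenstein smoothing near the Wahl point, to globalize that model over the family, and then to read off the Chern classes, the exceptionality, and the stability from the resulting geometry. First I would recall the standard local picture: a one-parameter $\Q$-Gorenstein smoothing of $\frac{1}{n^2}(1,na-1)$ has for total space the terminal cyclic quotient singularity $\frac1n(1,-1,a)$ of $\A^3$, with central fibre recovered as $A_{n-1}/\mu_n$ and general fibre the Milnor fibre $M$, the free $\mu_n$-quotient of the $A_{n-1}$-Milnor fibre; thus $H_\ast(M;\Q)$ is that of a point and $\pi_1(M)\cong\Z/n$. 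On the germ $(P\in\mathcal X)$ the class group of rank-one reflexive sheaves is $\Z/n$, and I would single out the rank-$n$ reflexive sheaf $\mathcal F:=\pi_\ast\OO$ obtained from the degree-$n$ index-one (canonical) cover $\pi\colon\A^3\to\frac1n(1,-1,a)$ (up to the harmless twist and dualization allowed by Lemma~\ref{intro:thm:exc}). Its restriction to $M$ is the flat bundle of the regular representation of $\pi_1(M)$, i.e.\ $\bigoplus_{i=0}^{n-1}L^{\otimes i}$ with $L$ a generator of $\Pic(M)\cong\Z/n$, while its restriction to the punctured germ $\mathcal U^\ast$ determines the one class in $\Pic(\mathcal U^\ast)\cong\Z/n$ that controls the globalization below.

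Since $\mathcal X$ is smooth away from $P$, producing $\mathcal E$ amounts to extending $\mathcal F|_{\mathcal U^\ast}$ to a rank-$n$ bundle on $\mathcal X\setminus\{P\}$, and the obstruction is that the torsion class cutting out $\mathcal F|_{\mathcal U^\ast}$ must lie in the image of $\Pic(\mathcal X\setminus\{P\})$. This is exactly where the hypothesis that $H_1(Y,\Z)\to H_1(X,\Z)$ is injective enters: via a Mayer--Vietoris comparison of the link of $P$ with the homology of the nearby and special fibres, injectivity guarantees that the relevant torsion line bundle is already defined on $Y$ (equivalently, that $\Z/n=\pi_1(M)$ injects into $\pi_1(Y)$), so $\mathcal F$ can be matched, across the link of $P$, with a bundle on the rest of the total space. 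A finite base change $S'\to S$ is then needed to resolve this gluing uniformly over the whole family (the residual, gerbe-type obstruction becomes trivial after the base change). This produces the rank-$n$ reflexive sheaf $\mathcal E$ on $\mathcal X'$; it is a vector bundle along a general fibre $Y$ because such a $Y$ avoids $P$ together with the finitely many further points (codimension $\ge 3$) at which a reflexive sheaf on the smooth locus of $\mathcal X'$ can fail to be locally free.

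Put $E=\mathcal E|_Y$. The identity $c_2(E)=\tfrac{n-1}{2n}\bigl(c_1(E)^2+n+1\bigr)$ is automatic from Lemma~\ref{thm:c2} once $E$ is known to be exceptional, so the only numerical point to verify directly is $c_1(E)\cdot K_Y\equiv\pm a\pmod n$, a local computation on $\frac1n(1,-1,a)$ comparing $c_1(\mathcal F)$ with the relative dualizing class through the $\mu_n$-character data. For exceptionality I would show that $\End(E)$ has vanishing higher cohomology and only scalar global sections. Rigidity of $\mathcal F$ relative to $S$ follows from the unobstructedness of $\Q$-Gorenstein deformations of Wahl singularities together with the fact that the local reflexive sheaf is pinned down by its class; a local-to-global spectral sequence combined with $H^1(\OO_Y)=H^2(\OO_Y)=0$ then forces $\Ext^1_Y(E,E)=\Ext^2_Y(E,E)=0$, and the same cohomology vanishing removes the non-scalar endomorphisms that the locally split model would otherwise contribute, leaving $\Hom_Y(E,E)=\C$. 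Finally, since stability is an open condition in a flat family of pure sheaves on a polarized family, it suffices that $\mathcal E|_X$ be $\mathcal H|_X$-stable, which follows from its explicit local shape at $P$: any saturated subsheaf of intermediate rank is constrained there so that its slope cannot reach that of $\mathcal E|_X$; alternatively one specializes a hypothetical destabilizer of $E$ to the central fibre by a Langton/properness argument.

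The step I expect to be the crux is the exceptionality, and within it the bookkeeping that makes the local-to-global passage work: one must see that the vanishing $H^1(\OO_Y)=H^2(\OO_Y)=0$ genuinely cuts the endomorphism algebra of this locally decomposable but globally twisted sheaf down to $\C$ and kills $\Ext^1$ — that is, reconcile the split local picture of $\mathcal F$ near $P$ with the global indecomposability of $E$ — all while controlling the finite base change needed to extend the sheaf across $P$ in the first place.
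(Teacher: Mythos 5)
First, a point of comparison: the paper itself gives no proof of this statement --- it is quoted verbatim from Hacking \cite{Ha11}, Theorem~1.1 --- so your proposal has to be measured against Hacking's published argument rather than anything internal to this paper. Measured that way, your route is genuinely different from his, and the difference is not a harmless variation: the decisive step is missing. You reduce the construction of $\mathcal E$ to ``matching $\mathcal F|_{\mathcal U^\ast}$ across the link of $P$ with a bundle on the rest of the total space,'' with the obstruction claimed to be a torsion class in $\Pic$ that the injectivity of $H_1(Y,\Z)\to H_1(X,\Z)$ kills. But for a rank-$n$ sheaf this is not a matching problem governed by a class in $\Pic$: there is no global rank-$n$ bundle on $\mathcal X\setminus\{P\}$ waiting to be matched with the local model, and producing one is precisely the content of the theorem; agreement of determinants (or of any torsion invariant) on the link is nowhere near sufficient to glue. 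Hacking's actual construction is geometric: after the base change $t=s^n$ he performs a weighted blowup of the total space at $P$ whose exceptional divisor $W$ is a specific normal projective surface with $h^i(\OO_W)=h^i(\OO_{\Pro^2})$, meeting the strict transform of $X$ along a smooth rational curve; $W$ carries a canonically constructed exceptional bundle of rank $n$, which is glued to $\OO^{\oplus n}$ on the strict transform (an elementary gluing along a $\Pro^1$), and the resulting bundle on the reducible special fiber is deformed sideways over $S'$ using $H^1(\OO_Y)=H^2(\OO_Y)=0$ and pushed down to $\mathcal X'$. The $H_1$-injectivity enters in controlling $c_1$ and the line bundles involved, not in solving a rank-$n$ descent problem on the link. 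Your local model $\pi_\ast\OO$ of the index-one cover, whose restriction to the Milnor fibre is a direct sum of torsion line bundles, is at least of the right flavour (Hacking shows $E|_M\simeq\bigoplus_{i=0}^{n-1}L^{\otimes i}$), but identifying the local model is the easy part.

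Two further steps are asserted rather than proved. Exceptionality: the vanishing $H^1(\OO_Y)=H^2(\OO_Y)=0$ does not by itself force $\Ext^1(E,E)=\Ext^2(E,E)=0$ or $\Hom(E,E)=\C$ for a rank-$n$ bundle whose local structure near the vanishing region is split; in \cite{Ha11} these are computed on the modified special fiber $\widetilde X\cup W$ from the exceptionality of the bundle on $W$ and its trivial restriction to $\widetilde X$, and then transported to $Y$ by semicontinuity and constancy of $\chi$. (Your observation that the $c_2$ formula follows from Lemma~\ref{thm:c2} once exceptionality is known is fine, since $\chi(\OO_Y)=1$ here.) Stability: $\mathcal E|_X$ is a reflexive sheaf on a projective surface, and a destabilizing subsheaf need not be visible near $P$, so ``its explicit local shape at $P$'' cannot establish stability of the central fiber; the openness-of-stability/Langton fallback you mention needs a stable object somewhere in the family to start from, which again requires the global analysis on the modified special fiber. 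So the proposal identifies the right ingredients ($H_1$-injectivity, the base change, the local class group, $H^1=H^2=0$) but does not supply the construction that makes the theorem true.
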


 For a $K_Y$-stable exceptional vector bundle $E$ as produced by Theorem~\ref{thm:hacking}, we will be most interested in its slope vector, which is the numerical invariant $c_1(E)/\rank(E)\in H^2(Y, \Q)$. Furthermore, we will say that two slope vectors are equivalent $``\sim"$ if they differ a combination of  translation by $H^2(Y, \Z)$, multiplication by $\pm1$, and the action of the monodromy group $\Gamma \subset \Aut(H^2(Y, \Z))$.  (The first two correspond to the vector bundle operations $E\leadsto E\otimes L$ for a line bundle $L$, and $E\leadsto E^*$.)

In this section we study  exceptional vector bundles $E$ of rank $2$ on a  Godeaux surface $Y$ such that $E$ is stable with respect to $K_Y$ up to the equivalence relation defined in Remark~\ref{equiv}. Note that these exceptional vector bundles correspond to the degenerations of smooth Godeaux surfaces to surfaces with a unique singularity of type $\frac{1}{4}(1,1)$ in the construction of Hacking.

\begin{remark}\label{equiv} Since we are only dealing with vector bundles of the same rank,
we can redefine the equivalence relation $``\sim"$ from Theorem~\ref{thm:hacking} as follows. Two exceptional vector bundles $E_1$ and $E_2$ of the same rank are equivalent if $c_1(E_1)$ can be obtained from $c_1(E_2)$ by translation by $\rank(E)H^2(Y, \Z)$, multiplication by $\pm 1$ and the action of the monodromy group. So we will be thinking of the equivalence relation $\sim$ as a relation on $c_1(E)\in H^2(Y, \Z)/\Tors Y$.
\end{remark}

Then  we can compute orbits of $c_1(E)$ under the equivalence relation $\sim$. 

\medskip

The lattice $B:= H^2(Y, \Z)/\Tors Y$of a Godeaux surface $Y$  is isomorphic to $ \Z K_Y\oplus (-E_8)$.
We can translate $E$ by a line bundle $\OO(M)$ for some 
$M\in H^2(Y, \Z)$ so that   $c_1(E)$ can be replaced by $c_1(E\otimes \OO(M)) = c_1(E) + 2c_1(\OO(M))$. Thus  $c_1(E)\in H^2(Y, \Z) \mod \Tors Y$ can be considered as $c_1(E)\in B/2B\simeq (\Z/2\Z)^9$.

The monodromy group here is $$\Gamma \subseteq \Aut(B, K_Y) = \{g \in \Aut B\mid g(K_Y)=K_Y\}.$$ 
We have $\Aut (B, K_Y) = \Aut(-E_8)$, and the automorphism group $\Aut (-E_8)$ is equal to the Weyl group $W(E_8)$.
We expect the monodromy group  to be equal to  $W(E_8)$. Under this assumption the following lemma holds.

\begin{lem} 
\label{lem:c1}
Suppose that the monodromy group is equal to $W (E_8)$. Up to the equivalence relation generated by translation by an element of $H^2(Y, \Z)$,  and the monodromy group, there are two equivalence classes  of $c_1(E)\in B$ with $c_1(E)^2 \equiv 1\mod 4$, one is given by $c_1(E) \sim K_Y$ modulo torsion. 
\end{lem}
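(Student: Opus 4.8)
The plan is to reduce everything to the classification of orbits of nonzero vectors in $B/2B \simeq (\Z/2\Z)^9$ under the action of the monodromy group $\Gamma = W(E_8)$, using the quadratic form induced by the intersection pairing. First I would observe that translation by $H^2(Y,\Z)$ replaces $c_1(E)$ by $c_1(E) + 2c_1(\OO(M))$, so the relevant invariant is the class $\bar c_1(E) \in B/2B$; since $B \simeq \Z K_Y \oplus (-E_8)$ is an even lattice up to the $K_Y$-summand (with $K_Y^2 = 1$ odd), the value $c_1(E)^2 \bmod 4$ is well defined on the coset $c_1(E) + 2B$, and the hypothesis $c_1(E)^2 \equiv 1 \bmod 4$ is a $\Gamma$-invariant and translation-invariant condition. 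The key point is then purely lattice-theoretic: the Weyl group $W(E_8)$ acts on $(-E_8) \otimes \Z/2\Z \simeq (\Z/2\Z)^8$, and I would invoke the classical fact that this action has exactly three orbits — the zero vector, and the two orbits of nonzero vectors distinguished by the value ($0$ or $1$) of the mod-$2$ quadratic form $q(x) = \tfrac12 x^2 \bmod 2$ (equivalently, $W(E_8)$ acting on $\F_2^8$ realizes the orthogonal group $O^+_8(\F_2)$, which is transitive on each of the two types of nonzero vectors).

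Next I would translate this into a statement about $B/2B$. Write $\bar c_1(E) = \varepsilon \bar K_Y + \bar w$ with $\varepsilon \in \{0,1\}$ and $\bar w \in (-E_8)\otimes \Z/2\Z$. Computing $c_1(E)^2 \bmod 4$: if $\varepsilon = 0$ then $c_1(E)^2 \equiv w^2 \equiv 0$ or $2 \bmod 4$, which can never be $\equiv 1$; hence the condition $c_1(E)^2 \equiv 1 \bmod 4$ forces $\varepsilon = 1$. So $\bar c_1(E) = \bar K_Y + \bar w$, and $c_1(E)^2 \equiv 1 + w^2 \equiv 1 \bmod 4$ forces $w^2 \equiv 0 \bmod 4$, i.e. $\bar w$ lies in the "$q = 0$" locus of $(-E_8)\otimes\Z/2\Z$. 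That locus consists of $\bar 0$ together with one $W(E_8)$-orbit of nonzero vectors. Since $W(E_8) = \Aut(B, K_Y)$ fixes $\bar K_Y$ and acts on the $(-E_8)$-part, the class $\bar c_1(E)$ is equivalent either to $\bar K_Y$ (the case $\bar w = 0$, i.e. $c_1(E) = K_Y$ modulo torsion) or to $\bar K_Y + \bar w_0$ for a fixed nonzero isotropic $\bar w_0$. That gives exactly the two claimed equivalence classes, one of them being $c_1(E) \sim K_Y$ modulo torsion.

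The main obstacle is pinning down the orbit structure of $W(E_8)$ on $\F_2^8$ cleanly — in particular verifying that within the quadric $\{q = 0\}$ the group is transitive on nonzero vectors, and that $\{q=1\}$ forms a single further orbit (this is the standard identification $W(E_8)/\{\pm 1\} \cong O^+_8(\F_2)$, together with Witt's theorem for the nondegenerate even quadratic form of plus type in dimension $8$ over $\F_2$); I would cite this rather than prove it. A secondary point requiring care is checking that $c_1(E)^2 \bmod 4$ is genuinely well defined modulo translation by $2B$ and invariant under $\Gamma$ — this is immediate since for $v \in B$ and $b \in B$ one has $(v + 2b)^2 = v^2 + 4(v\cdot b) + 4 b^2 \equiv v^2 \bmod 4$, and $\Gamma$ preserves the pairing. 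Everything else is bookkeeping with the splitting $B = \Z K_Y \oplus (-E_8)$.
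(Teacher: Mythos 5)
Your proof is correct and takes essentially the same route as the paper: both reduce the statement to the orbit structure of $W(E_8)=\Aut(B,K_Y)$ acting on $B/2B$. The paper merely asserts (after identifying $B$ with $H^2(\Bl^8\Pro^2,\Z)$) that there are two relevant orbits, represented by $H$ and $K$, whereas you supply the justification it omits, via the orthogonal splitting $B=\Z K_Y\oplus(-E_8)$ and the classification of $W(E_8)$-orbits on $(\Z/2\Z)^8$ by the mod-$2$ quadratic form, together with the correct observation that $c_1(E)^2\equiv 1 \bmod 4$ forces the $K_Y$-component to be odd and the $(-E_8)$-component to be isotropic.
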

\begin{proof}

Note that $(H^2(Y, \Z)/\Tors Y, \cup) = (H^2(\operatorname{Bl}^8 \Pro^2), \cup)$, so instead of computing the orbits of the action of the monodromy group on $(H^2(Y, \Z)/\Tors Y, \cup)$, we can find them for $(H^2(\operatorname{Bl}^8 \Pro^2), \cup)$. The lattice $(H^2(\operatorname{Bl}^8 \Pro^2), \cup)$ has  basis $\pi^*(H)$, $E_1, \dots, E_8$, where $\pi: \operatorname{Bl}^8 \Pro^2\to \Pro^2$,  $H \subset \Pro^2$ is a hyperplane, and $E_1, \dots, E_8$ are exceptional divisors on $\operatorname{Bl}^8\Pro^2$ satisfying $E_i^2 =-1$ and $E_i \cdot E_j = 0$ for $i\neq j$.  Then for the $\Aut(E_8)$ group action on the lattice $H^2(\operatorname{Bl}^8 \Pro^2))/2H^2(\operatorname{Bl}^8 \Pro^2))$ there are only 2 orbits (given by $\pi^*(H)$ and $-3\pi^*(H)  +  E_1 + \dots + E_8 = K_{\operatorname{Bl}^9 \Pro^2}$).  Thus there are only two corresponding orbits in $(H^2(Y, \Z), \cup)$. \end{proof}

We use the following construction of  rank 2 vector bundles.

\begin{thm}[\cite{Hl}, Theorem 5.1.1] 
\label{Hlt}
Let $Z\subset X$ be a local complete intersection of codimension two, and let $M$ and $L$ be line bundles on $X$. Then there exists an extension
$$0\to L\to E\to M\otimes \II_Z\to 0$$ 
such that $E$ is locally free if and only if the pair $(L^{\vee}\otimes M\otimes K_X, Z)$ has the Cayley--Bacharach property:

(CB) If $Z'\subset Z$ is a subscheme with $\operatorname{length}
(Z') = \operatorname{length}
(Z)-1$ and $s\in H^0(X,   L^{\vee}\otimes M\otimes K_X)$ is a section with $s|_{Z'}=0$, then $s|_Z=0$.

Moreover, $c_1(E)=c_1(L)+c_1(M)$, and  $c_2(E)= c_1(L)\cdot c_1(M)+\length (Z)$.
\end{thm}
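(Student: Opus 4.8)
The plan is to prove the Serre construction (Theorem~\ref{Hlt}) by the classical argument; I will sketch the case $\dim X=2$, which is the only one used in this paper (so $Z$ is zero-dimensional, hence an Artinian Gorenstein scheme with invertible dualizing sheaf $\omega_Z\cong\mathcal{E}xt^2_{\OO_X}(\OO_Z,\omega_X)$), the general codimension-two case being analogous. First I would reduce to the case $L=\OO_X$: tensoring any exact sequence $0\to L\to E\to M\otimes\II_Z\to0$ by $L^\vee$ replaces it by $0\to\OO_X\to E\otimes L^\vee\to N\otimes\II_Z\to0$ with $N:=L^\vee\otimes M$, and leaves both the local freeness of the middle term and the pair $(L^\vee\otimes M\otimes K_X,Z)=(N\otimes K_X,Z)$ (hence condition (CB)) unchanged. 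Such extensions are classified up to isomorphism by $\Ext^1(N\otimes\II_Z,\OO_X)\cong\Ext^1(\II_Z,N^\vee)$, and the Chern class formulas are immediate from multiplicativity of the total Chern class in the sequence, using $c_1(\II_Z)=0$ and $c_2(\II_Z)=\length(Z)$.

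Next I would characterize when such an extension has locally free middle term; this is a local question at the points of $Z$. Fixing $z\in Z$, putting $R=\OO_{X,z}$ (regular local of dimension two) and $I=\II_{Z,z}=(f,g)$ with $(f,g)$ a regular sequence, the extension reads locally $0\to R\to E_z\to I\to0$, with class in $\Ext^1_R(I,R)$; the sequence of $0\to I\to R\to R/I\to0$ gives $\Ext^1_R(I,R)\cong\Ext^2_R(R/I,R)$, which the Koszul resolution on $(f,g)$ identifies with $R/I$. The Koszul complex itself realizes $0\to R\xrightarrow{(-g,f)^{t}}R^{2}\xrightarrow{(f,g)}I\to0$, so the extension class equal to a \emph{generator} of $R/I$ gives the free module $E_z=R^{2}$, whereas a class in $\mathfrak m\cdot(R/I)$ gives a non-free $E_z$. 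Globalizing via the local-to-global spectral sequence, using $\cHom(\II_Z,N^\vee)\cong N^\vee$ and $\mathcal{E}xt^1(\II_Z,N^\vee)\cong\mathcal{E}xt^2(\OO_Z,N^\vee)\cong\Lambda$ where $\Lambda:=\omega_Z\otimes(N\otimes\omega_X)^{\vee}|_Z$ (relative duality for $Z\hookrightarrow X$), one concludes that an extension class $e\in\Ext^1(\II_Z,N^\vee)$ gives a locally free $E$ exactly when its image under the edge map $\rho\colon\Ext^1(\II_Z,N^\vee)\to H^0(Z,\Lambda)$ is nowhere vanishing on $Z$.

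It then remains to decide which nowhere-vanishing sections of $\Lambda$ lie in the image of $\rho$. The local-to-global sequence reads
\begin{equation*}
H^1(X,N^\vee)\to\Ext^1(\II_Z,N^\vee)\xrightarrow{\ \rho\ }H^0(Z,\Lambda)\xrightarrow{\ \partial\ }H^2(X,N^\vee),
\end{equation*}
so a section $\tau$ lifts iff $\partial\tau=0$. Serre duality on $X$ gives $H^2(X,N^\vee)\cong H^0(X,N\otimes K_X)^{\vee}$, and Serre (trace) duality on the Artinian Gorenstein scheme $Z$ gives a perfect pairing $H^0(Z,\Lambda)\times H^0(Z,(N\otimes K_X)|_Z)\to H^0(Z,\omega_Z)\xrightarrow{\ \mathrm{tr}_Z\ }\C$; tracing through the construction of $\partial$ identifies it with $\tau\mapsto\bigl(s\mapsto\mathrm{tr}_Z(\tau\cdot s|_Z)\bigr)$. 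Hence the liftable sections are exactly the annihilator of the image of the restriction $H^0(X,N\otimes K_X)\to H^0(Z,(N\otimes K_X)|_Z)$ under this pairing, and a purely linear-algebra argument over the Artinian ring $\OO_Z$ (dualizing the colength-one filtrations of $\OO_Z$) shows that this annihilator contains a nowhere-vanishing section if and only if, for every subscheme $Z'\subset Z$ of length $\length(Z)-1$, any $s\in H^0(X,N\otimes K_X)$ with $s|_{Z'}=0$ has $s|_Z=0$, i.e. if and only if $(L^\vee\otimes M\otimes K_X,Z)$ satisfies (CB).

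The step I expect to be the main obstacle is making the two duality identifications precise and compatible: the relative-duality isomorphism $\mathcal{E}xt^2_{\OO_X}(\OO_Z,\omega_X)\cong\omega_Z$ (which uses that $Z$ is a codimension-two l.c.i., hence Cohen--Macaulay) and, on top of it, the identification of the coboundary $\partial$ with the Cayley--Bacharach pairing via Serre duality on $X$ and the trace pairing on $Z$. Once these canonical identifications are in hand, the "local freeness $\Leftrightarrow$ generating section" criterion (from the Koszul computation) and the final reduction of "a nowhere-vanishing liftable section exists" to (CB) are routine linear algebra, and the Chern class computation is formal.
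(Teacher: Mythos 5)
The paper does not prove this statement at all: it is quoted verbatim from Huybrechts--Lehn \cite{Hl}, Theorem~5.1.1, and used as a black box, so there is no internal proof to compare against. Your sketch is the standard Serre-construction argument --- reduction to $L=\OO_X$, the Koszul computation identifying $\mathcal Ext^1(\II_Z,N^\vee)$ with $\omega_Z\otimes(N\otimes\omega_X)^\vee|_Z$ and the local ``class generates the stalk $\Leftrightarrow$ $E$ free at $z$'' criterion, then the local-to-global sequence and Serre/trace duality to turn liftability of a generating section into (CB) --- and this is essentially the proof given in that reference, correct in outline. The two points you single out as delicate (compatibility of the relative-duality and Serre-duality identifications with the coboundary $\partial$) are indeed where the work lies; the only other step worth making explicit is the final existence argument when $Z$ is non-reduced, where ``the annihilator of the restriction image contains an everywhere-generating section'' is deduced from (CB) by observing that the non-generating sections form a finite union of proper subspaces of $H^0(Z,\Lambda)$, which cannot exhaust a vector space over $\C$.
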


After a twist by a line bundle ($E\mapsto M\otimes E$) there exists a section $s\in \Gamma(Y, E)$ with isolated zeroes, so any rank two vector bundle on a surface $Y$  can be described as an extension
\begin{equation}
\label{extdef}
0\to \OO_Y\to E\to L \otimes \II_Z\to 0,
\end{equation}
where $L=c_1(E)$ is a line bundle on the surface $Y$, $Z$ is a zero dimensional subscheme of $Y$ with $\operatorname{length}(Z)=c_2(E)$, and  the divisor $K_Y+L$ satisfies the Cayley-Bacharach property with respect to $Z$.

The following technical lemma gives us a method to check if the vector bundle constructed using the exact sequence~(\ref{extdef}) is exceptional.

\begin{lem}
\label{conditions}
 Let $Y$ be a Godeaux surface with $K_Y$ ample. Suppose that  a vector bundle $E$ of rank 2 on $Y$ and such that $c_2(E) = \frac{1}{4}(c_1(E)^2+3)$ is given by an exact sequence
\begin{equation}
\label{generalE}
0\to \OO_Y \to E \to L \otimes \II_Z\to 0.
\end{equation}
Assume that $L\cdot K_Y>0$. Then $E$ is exceptional if $Z\neq\emptyset$, $L\neq K_Y$, $H^0(L\otimes \II_Z) = 0$, and  $H^0( \OO_Y(K_Y)\otimes L \otimes \II_Z^2)=0$.
\end{lem}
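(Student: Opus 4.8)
The plan is to verify the three vanishing conditions $\Hom(E,E)=\C$ and $\Ext^1(E,E)=\Ext^2(E,E)=0$ directly by applying $\Hom(E,-)$ to the defining sequence \eqref{generalE}, and then $\Hom(-,\OO_Y)$ and $\Hom(-,L\otimes\II_Z)$ to the resulting pieces, reducing everything to cohomology of line bundles twisted by $\II_Z$ and $\II_Z^2$ on $Y$. First I would record the numerical input: since $c_2(E)=\tfrac14(c_1(E)^2+3)$, Lemma~\ref{thm:c2} guarantees $\chi(E,E)=1$, so it suffices to show $\Hom(E,E)=\C$ (which then forces $\Ext^1(E,E)=\Ext^2(E,E)=0$ once we also check one of the two higher $\Ext$'s vanishes, or more cleanly, show $\Ext^2(E,E)=0$ and $\Hom(E,E)=\C$ and deduce $\Ext^1=0$ from $\chi=1$). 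Actually the cleanest route: prove $\Hom(E,E)=\C$ and $\Ext^2(E,E)=0$; then $\Ext^1(E,E)=\Hom(E,E)+\Ext^2(E,E)-\chi(E,E)=1+0-1=0$.

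For $\Ext^2(E,E)$: by Serre duality $\Ext^2(E,E)=\Hom(E,E\otimes K_Y)^{\vee}$. Tensoring \eqref{generalE} by $E\otimes K_Y$ and taking $\Hom(E,-)$, one sees it suffices to show $\Hom(E,K_Y)=H^2(E^{\vee}\otimes K_Y)^{\vee}$-type terms and $\Hom(E, L\otimes K_Y\otimes\II_Z)$ vanish; dualizing \eqref{generalE} (using that $E$ is locally free, so $E^{\vee}$ sits in $0\to L^{\vee}\to E^{\vee}\to\OO_Y\otimes\II_Z'\to\cdots$, or better tensor \eqref{generalE} directly by a line bundle and use $E^\vee\cong E\otimes L^{\vee}$ up to torsion in rank 2), these reduce to $H^0$, $H^1$, $H^2$ of the line bundles $K_Y$, $K_Y+L$, $K_Y-L$, $2K_Y$, etc., twisted by $\II_Z$ or $\II_Z^2$. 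The hypotheses $L\cdot K_Y>0$, $L\neq K_Y$, $Z\neq\emptyset$ are exactly what kill the unwanted $H^0$'s and (via $p_g=q=0$ and Serre duality) the $H^2$'s: e.g. $H^0(\OO_Y)=\C$ gives the scalars, $H^0(-L)=0$ since $L\cdot K_Y>0$ and $K_Y$ ample, $H^0(K_Y-L)=0$ for the same reason once $L\neq K_Y$ (a section would be an effective divisor of $K_Y$-degree $<K_Y^2=1$, hence zero, or $=K_Y$ which is excluded), and $H^2(L)=H^0(K_Y-L)^\vee=0$ likewise.

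For $\Hom(E,E)=\C$: apply $\Hom(E,-)$ to \eqref{generalE} to get $0\to\Hom(E,\OO_Y)\to\Hom(E,E)\to\Hom(E,L\otimes\II_Z)$. I expect $\Hom(E,\OO_Y)=H^0(E^\vee)$; from the dual sequence $0\to L^{\vee}\to E^{\vee}\to\II_Z\to 0$ (the rank-2 dual, with $\det E^\vee=L^\vee$) this is sandwiched by $H^0(L^\vee)=0$ and $H^0(\II_Z)\subseteq H^0(\OO_Y)=\C$, and any section of $E^\vee$ lifting $1\in H^0(\OO_Y)$ splits the original sequence — which is impossible unless the extension is nontrivial, but here we must argue more carefully: the map $\Hom(E,E)\to\Hom(E,L\otimes\II_Z)$ sends $\mathrm{id}_E$ to the surjection $E\to L\otimes\II_Z$, and I would show $\Hom(E,\OO_Y)=\C\cdot(\text{the inclusion }\OO_Y\hookrightarrow E\text{ composed with nothing})$, i.e. $H^0(E^\vee)$ is spanned by the dual of that inclusion, giving the scalar endomorphisms; then one checks $\Hom(E,L\otimes\II_Z)$ contains at most the image of $\mathrm{id}_E$, using the hypotheses $H^0(L\otimes\II_Z)=0$ and $H^0(K_Y\otimes L\otimes\II_Z^2)=0$ — the latter governs $\Hom(L\otimes\II_Z, L\otimes\II_Z)=H^0(\OO_Y)+\text{(correction terms from }\II_Z/\II_Z^2)$ and more precisely the term $\mathrm{Ext}^1(L\otimes\II_Z,\OO_Y)=H^1(K_Y+L\otimes\II_Z^2\cdots)$ by local duality, so that the obstruction to $\Hom(E,L\otimes\II_Z)=\C$ lives in a group controlled by $H^0(K_Y\otimes L\otimes\II_Z^2)$.

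The main obstacle I anticipate is the careful bookkeeping of the sheaf $\II_Z^2$ versus $\II_Z$ when one computes $\mathcal Hom$ and $\mathcal Ext^i$ of the non-locally-free sheaf $L\otimes\II_Z$ with itself and with $\OO_Y$: the condition $H^0(K_Y\otimes L\otimes\II_Z^2)=0$ is clearly designed to control exactly one such $\Ext$ or $\Hom$ group (via Serre duality on $Y$ applied to a local-to-global spectral sequence, since $\mathcal Ext^1_{\OO_Y}(\II_Z,\OO_Y)$ is supported on $Z$ and related to the square of the ideal). Pinning down precisely which cohomology group $H^0(K_Y\otimes L\otimes\II_Z^2)$ computes, and confirming that its vanishing together with the other three hypotheses suffices (rather than being merely necessary), is the delicate point; everything else is a diagram chase through long exact sequences combined with the vanishing $H^i(\OO_Y)=0$ for $i>0$ and Serre duality on the Godeaux surface.
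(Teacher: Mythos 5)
Your overall framework is the same as the paper's: use $c_2(E)=\frac14(c_1(E)^2+3)$ to get $\chi(E,E)=1$ via Lemma~\ref{thm:c2}, then prove $\Hom(E,E)=\C$ and $\Ext^2(E,E)=0$ and deduce $\Ext^1(E,E)=0$ from the Euler characteristic. Your treatment of $\Hom(E,E)=\C$ is essentially sound (and close to the paper's, which applies $\Hom(-,E)$ where you apply $\Hom(E,-)$): $\Hom(E,\OO_Y)=H^0(E^{\vee})$ vanishes because it is sandwiched between $H^0(L^{\vee})=0$ and $H^0(\II_Z)=0$ (using $Z\neq\emptyset$), and $\Hom(E,L\otimes\II_Z)$ reduces to $\Hom(L\otimes\II_Z,L\otimes\II_Z)\subseteq H^0(\mathcal Hom(\II_Z,\II_Z))=\C$ once $H^0(L\otimes\II_Z)=0$; the remarks about sections ``splitting the sequence'' are unnecessary detours but not errors.

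The genuine gap is in the $\Ext^2(E,E)=0$ half, and you have in fact flagged it yourself: you never establish which cohomology group the hypothesis $H^0(\omega_Y\otimes L\otimes\II_Z^2)=0$ actually controls. After Serre duality and the reduction $\mathcal Hom(E,E\otimes\omega_Y)\cong E\otimes L^{\vee}\otimes\omega_Y\oplus_{\mathrm{ext}}E\otimes\omega_Y\otimes\II_Z$ (coming from $E^{\vee}\cong E\otimes L^{\vee}$ and the twisted defining sequence), the first piece is killed by $L\cdot K_Y>0$, $L\neq K_Y$ and $p_g=0$, exactly as you say. But the second piece requires the isomorphism $H^0(E\otimes\omega_Y\otimes\II_Z)\simeq H^0(\omega_Y\otimes L\otimes\II_Z^2)$, and this is the one nontrivial idea of the proof. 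The paper gets it from a commutative diagram comparing the sequences $0\to E\otimes\omega_Y\otimes\II_Z\to E\otimes\omega_Y\to(E\otimes\omega_Y)|_Z\to 0$ and $0\to\omega_Y\otimes L\otimes\II_Z^2\to\omega_Y\otimes L\otimes\II_Z\to(\omega_Y\otimes L\otimes\II_Z)|_Z\to 0$: the middle vertical map is an isomorphism because $H^0(\omega_Y)=H^1(\omega_Y)=0$, and the right-hand vertical map is an isomorphism precisely because the section $\OO_Y\to E$ defining the extension vanishes on $Z$, so that $\omega_Y|_Z\to(E\otimes\omega_Y)|_Z$ is the zero map. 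This last geometric input --- the vanishing of the extension's defining section along $Z$, which is what makes $\II_Z^2$ rather than $\II_Z$ appear --- is absent from your sketch; your appeal to ``local duality'' and $\mathcal Ext^1(\II_Z,\OO_Y)$ being supported on $Z$ does not by itself produce the identification, and without it the hypothesis $H^0(\omega_Y\otimes L\otimes\II_Z^2)=0$ is never connected to $\Ext^2(E,E)=0$.
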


\begin{proof}

By the Riemann--Roch theorem and our assumption that  $c_2(E) = \frac{1}{4}(c_1(E)^2+3)$ we have $\chi(\End E)=1$, see Theorem~\ref{thm:c2}. Thus it is enough to prove that $\Hom(E,E)=\C$ and $\Ext^2(E,E)=0$.    By the decomposition $H^2(Y, \Z)/\Tors =\langle K_Y\rangle\oplus\langle K_Y^{\perp}\rangle$  we can write $L = mK_Y + A$, where $A\in K_Y^{\perp}$. Here $m = L\cdot K_Y$ and we assume $m>0$.

First let us examine the condition $\Hom(E,E)=\C$. By applying $\Hom(-, E)$ to~(\ref{generalE}) we obtain the exact sequence
$$0\to H^0(E\otimes L^{\vee})\to \Hom(E,E)\to H^0(E)\to \Ext^1(L\otimes \II_Z, E)\to\dots .$$
By tensoring~(\ref{generalE}) with $L^{\vee}$ we get
$$0\to L^{\vee}\to E\otimes L^{\vee}\to \II_Z\to 0.$$

Note that $H^0(L^{\vee})=0$ since $K_Y$ is ample,  and $L\cdot K_Y>0$. Also $H^0(\II_Z)=0$ because $Z\neq \emptyset$ and $H^0(\OO_Y)=\C$.  Therefore $H^0(E\otimes L ^{\vee})=0$.
By~(\ref{generalE}) $H^0(E)/H^0(\OO_Y)\simeq H^0(L\otimes \II_Z) $, using  $H^1(\OO_Y)=0$.
So the condition $\Hom(E,E)=\C$ is implied by our assumption
\begin{equation}
\label{condition1}
H^0(L\otimes \II_Z) = 0.
\end{equation}

\medskip

Now consider the condition $\Ext^2(E,E)=0$. Write $\omega_Y = \OO_Y(K_Y)$.
Note that $\Ext^2(E,E)=\Hom(E, E\otimes \omega_Y)^*$ by Serre duality, so it suffices to prove that $\Hom(E, E\otimes \omega_Y)$ is trivial.

Tensoring the exact sequence ~(\ref{generalE}) with $\omega_Y$, we obtain the following exact sequence
\begin{equation}
\label{EK+a}
0\to\omega_Y\to E\otimes \omega_Y\to\omega_Y\otimes L \otimes \II_Z\to 0.
\end{equation}
Now we can apply $\mathcal Hom (E, -)$ to the exact sequence~(\ref{EK+a}) using the fact  that $\mathcal Hom(E,-)=E^{\vee}\otimes -  = E\otimes L^{\vee}\otimes -$. Here $E^{\vee}\simeq E\otimes L^{\vee}$, since $\rank E=2$, and $L = \det E = \wedge^2 E$, so the perfect pairing $E\otimes E\to \wedge^2E=L$ induces an isomorphism $E\simeq E^{\vee}\otimes L$.
The result will be an exact sequence again since $E$ is locally free:
\begin{equation*}
0\to E\otimes L^{\vee} \otimes \omega_Y\to \mathcal Hom(E, E\otimes \omega_Y)\to E\otimes\omega_Y\otimes \II_Z\to 0.
\end{equation*}

By tensoring the exact sequence~(\ref{generalE}) with $L^{\vee}\otimes \omega_Y$ we obtain that  $$H^0(E\otimes L^{\vee}\otimes \omega_Y)=0,$$ 
because  $H^0(\OO_Y(-(m-1)K_Y - A)) =0 $ for all $m>1$, and  $H^0(\OO_Y(K_Y))=0$. In the case $m=1$ we have $H^0(\OO_Y(-A))=0$ unless $A=0$, because $A\cdot K_Y=0$, and $K_Y$ is ample. By Lemma~\ref{noexK} below, $E$ is not exceptional in the remaining case $A=0$, i.e. $L= K_Y$.

So it suffices to show that  $H^0(E\otimes\omega_Y\otimes \II_Z)=0$. 
We will show that $$H^0(E\otimes \omega_Y\otimes \II_Z)\simeq H^0(\omega_Y\otimes L \otimes \II_Z^2).$$

By tensoring $(E\otimes\omega_Y)$  with the exact sequence
$0\to\II_Z\to \OO_Y\to \OO_Z\to 0$
we obtain the exact sequence
$$0\to E\otimes\omega_Y \otimes \II_Z\to  E\otimes\omega_Y \to E\otimes\omega_Y|_Z\to 0$$
and the associated long exact sequence of cohomology
\begin{equation*}
0\to H^0(E\otimes \omega_Y\otimes \II_Z)\to H^0(E\otimes \omega_Y)\to H^0((E\otimes\omega_Y)|_Z)\to\dots
\end{equation*}
Now using~(\ref{EK+a})  we get $H^0(E\otimes\omega_Y)\simeq H^0(\omega_Y\otimes L \otimes \II_Z)$ since $H^0(\omega_Y)=H^1(\omega_Y)=0$.

Also we can restrict the sequence~(\ref{EK+a}) to $Z$ to obtain an exact sequence
\begin{equation*}
\omega_Y|_Z\to E\otimes \omega_Y|_Z\to  \omega_Y\otimes L \otimes \II_Z|_Z\to 0.
\end{equation*}
The map $s:\omega_Y\to E\otimes\omega_Y$ is defined by a section of $E$ which vanishes on $Z$, so the map $\omega_Y|_Z\to E\otimes\omega_Y|_Z$ is the zero map, thus $H^0(E\otimes \omega_Y|_Z)\simeq H^0(\omega_Y\otimes L \otimes \II_Z|_Z)$.

By tensoring the exact sequence $0\to \II_Z\to \OO_Y\to \OO_Z\to 0$ with $\II_Z$, we obtain $\II_Z\otimes \II_Z\to \II_Z\to\II_Z|_Z\to 0$, or 
\begin{equation}\label{IZ}
0\to \II_Z^2\to   \II_Z\to\II_Z|_Z\to 0
\end{equation}

We have the following commutative diagram with  exact rows
$$
\begin{CD}
0\rightarrow H^0(E\otimes \omega_Y\otimes \II_Z)@>>> H^0(E\otimes \omega_Y)@>>>H^0((E\otimes \omega_Y)|_Z)\\
@VaVV@VbVV@VcVV\\
0\rightarrow H^0(L\otimes \omega_Y\otimes \II_Z^2)@>>> H^0(L\otimes \omega_Y\otimes \II_Z)@>>>H^0((L\otimes\omega_Y\otimes \II_Z)|_Z),\\
\end{CD}
$$
where the bottom row is obtained by tensoring (\ref{IZ}) with $(L\otimes \omega_Y)$, and then taking global sections.
Since the maps $b$ and $c$ are isomorphisms, we conclude that $a$ is an isomorphism as well.

Thus $\Ext^2(E, E) =0$ if \begin{equation}
\label{condition2}
 H^0( \omega_Y\otimes L \otimes \II_Z^2)=0.
\end{equation}\end{proof}

Our analysis of exceptional vector bundles on a Godeaux surface will be divided into two cases corresponding to the two orbits from Lemma\ref{lem:c1}. We first handle the case when $c_1(E)\sim K_Y \mod \Tors Y$ by the following theorem.
We state more precise version of the Theorem~\ref{intro-e} here.

\begin{thm}
\label{Kvb}
 Let $Y$ be a Godeaux surface with $K_Y$ ample, $\sigma \in \Tors Y$ and $P$ a base point of $|2K_Y+\sigma|$. There is a unique  
 vector bundle $E$ of rank 2 on $Y$ given by an extension
\begin{equation}
\label{K}
0\to \OO_Y\to E\to \OO_Y(K_Y+ \sigma)\otimes \II_P\to 0.
\end{equation}
Then\begin{itemize}
\item $E$ is stable with respect to $K_Y$, $c_1(E)=K_Y+\sigma$, and $c_2(E)=1$.
\item Conversely, every $K_Y$-stable  rank 2 vector bundle with $c_1(E)= K_Y$ modulo torsion and $c_2(E)=1$ is equivalent to one of this form.   
\item  $E$ is exceptional if  $\sigma \in \Tors Y\setminus 2\Tors Y$  (in particular, we require $|\Tors Y|$ is even), and $P$ is a simple base point of $|2K_Y+\sigma|$.
\item If $E$ is exceptional, then $\sigma \in \Tors Y\setminus 2\Tors Y$
\end{itemize}

\end{thm}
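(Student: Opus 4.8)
The plan is to establish the four assertions in order, relying on Theorem~\ref{Hlt} (the Serre-type construction), Lemma~\ref{conditions}, the later Lemma~\ref{noexK}, and Kodaira vanishing. For existence and uniqueness of $E$ I would apply Theorem~\ref{Hlt} with $L=\OO_Y$, $M=\OO_Y(K_Y+\sigma)$ and $Z=\{P\}$, a reduced point and hence a local complete intersection of codimension two. For such a $Z$ the Cayley--Bacharach condition for $(L^{\vee}\otimes M\otimes K_Y,Z)=(\OO_Y(2K_Y+\sigma),\{P\})$ reduces to the statement that every section of $|2K_Y+\sigma|$ vanishes at $P$ --- that is, that $P$ is a base point, which is exactly our hypothesis. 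So a locally free extension~(\ref{K}) exists, with $c_1(E)=K_Y+\sigma$ and $c_2(E)=1$ directly from Theorem~\ref{Hlt}. For uniqueness I would compute $\Ext^1(\OO_Y(K_Y+\sigma)\otimes\II_P,\OO_Y)\cong\Ext^1(\II_P,\OO_Y(-K_Y-\sigma))$ from $0\to\II_P\to\OO_Y\to\OO_P\to0$: modulo $H^1(\OO_Y(-K_Y-\sigma))$ this group is the kernel of $\C\cong\Ext^2(\OO_P,\OO_Y(-K_Y-\sigma))\to H^2(\OO_Y(-K_Y-\sigma))$, which is Serre dual to evaluation at $P$ and hence zero because $P$ is a base point. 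Since a positive multiple of $K_Y+\sigma$ is linearly equivalent to a multiple of the ample divisor $K_Y$, the class $K_Y+\sigma$ is ample, so Kodaira vanishing gives $H^1(\OO_Y(2K_Y+\sigma))=0$, hence $H^1(\OO_Y(-K_Y-\sigma))=0$ by Serre duality; thus $\Ext^1$ is one-dimensional and $E$ is unique.

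For $K_Y$-stability, note $\mu(E)=\tfrac12$ and let $\OO_Y(D)\hookrightarrow E$ be a saturated rank-one subsheaf; it suffices to rule out $D\cdot K_Y\ge1$. If the composite $\OO_Y(D)\to\OO_Y(K_Y+\sigma)\otimes\II_P$ vanishes then $\OO_Y(D)\hookrightarrow\OO_Y$ and $D\cdot K_Y\le0$; otherwise it is injective, $K_Y+\sigma-D$ is effective, so $D\cdot K_Y\le K_Y^2=1$, and equality would force $D\equiv K_Y+\sigma$ linearly and the composite to be a nonzero section of $\II_P$, which is impossible. Hence $E$ is stable. For the converse, let $E$ be $K_Y$-stable of rank two with $c_1(E)=K_Y+\sigma$, $\sigma\in\Tors Y$, and $c_2(E)=1$; Riemann--Roch gives $\chi(E)=1$, so $h^0(E)+h^2(E)\ge1$, and since $h^2(E)=h^0(E^{\vee}\otimes\omega_Y)$ with $E^{\vee}\otimes\omega_Y$ again $K_Y$-stable of the same numerical type and $\sim$-equivalent to $E$ (Remark~\ref{equiv}), I may assume $h^0(E)\ge1$. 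A nonzero section has zero scheme $D+Z_0$ with $D$ effective; stability bounds $D\cdot K_Y\le0$, and ampleness of $K_Y$ then forces $D=0$, so the section vanishes on a length-one scheme, i.e. a reduced point $P$, giving an extension~(\ref{K}). That $P$ is a base point of $|2K_Y+\sigma|$ follows from the Cayley--Bacharach property via the ``only if'' direction of Theorem~\ref{Hlt}.

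For exceptionality I would apply Lemma~\ref{conditions} to~(\ref{K}) with $L=\OO_Y(K_Y+\sigma)$, $Z=\{P\}$: the numerical identity $c_2=\tfrac14(c_1^2+3)$ holds, $L\cdot K_Y=1>0$, and $L\neq\OO_Y(K_Y)$ since $\sigma\neq0$. As $E$ is stable it is simple, so $\Hom(E,E)=\C$ is automatic and only $\Ext^2(E,E)=0$ must be checked, which by Lemma~\ref{conditions} follows from $H^0(\OO_Y(2K_Y+\sigma)\otimes\II_P^2)=0$. Now $h^0(2K_Y+\sigma)=2$ by Riemann--Roch and Kodaira vanishing, so $|2K_Y+\sigma|$ is a pencil, and $P$ being a \emph{simple} base point means its two generators have independent differentials at $P$; hence no member has multiplicity $\ge2$ at $P$, which gives the desired vanishing. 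For the last assertion, suppose $E$ is exceptional with $\sigma=2\tau\in 2\Tors Y$; then $E\otimes\OO_Y(-\tau)$ is $K_Y$-stable of rank two with $c_1=K_Y$ and $c_2=1$, so the construction in the converse exhibits $E\otimes\OO_Y(-\tau)$, or its dual twisted by $\omega_Y$, as an extension~(\ref{K}) with $\sigma=0$; such a bundle is not exceptional by Lemma~\ref{noexK}, and since duality and tensoring by line bundles preserve exceptionality (Lemma~\ref{intro:thm:exc}) this contradicts the exceptionality of $E$. (The existence of $\sigma\in\Tors Y\setminus 2\Tors Y$ forces $|\Tors Y|$ even, i.e. $H_1(Y)=\Z/4\Z$ or $\Z/2\Z$.)

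The steps I expect to be most delicate are the $\Ext^1$ computation underpinning uniqueness --- where both the base-point hypothesis and the vanishing $H^1(\OO_Y(2K_Y+\sigma))=0$ are genuinely used --- and the converse, where one must exclude a divisorial component in the zero scheme of the chosen section and correctly arrange, by passing to $E^{\vee}\otimes\omega_Y$ when necessary, that a suitable section exists at all. Once these are in hand, the two exceptionality statements reduce to applications of Lemmas~\ref{conditions} and~\ref{noexK}, the main point being that stability already supplies $\Hom(E,E)=\C$, so it is only the $\Ext^2$-vanishing --- equivalently, the absence of order-two base points --- that must be verified.
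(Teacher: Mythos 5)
Your proposal is correct and follows the same overall architecture as the paper's proof: Serre's construction with the Cayley--Bacharach condition for existence, a one-dimensional $\Ext^1(\OO_Y(K_Y+\sigma)\otimes\II_P,\OO_Y)$ for uniqueness, a destabilizing-line-subbundle analysis for stability, Riemann--Roch plus Serre duality plus stability for the converse, reduction of $\Ext^2(E,E)=0$ to $H^0(\OO_Y(2K_Y+\sigma)\otimes\II_P^2)=0$ via Lemma~\ref{conditions}, and Lemma~\ref{noexK} for the final assertion. The one genuine divergence is in establishing $\Hom(E,E)=\C$: the paper verifies condition~(\ref{condition1}), i.e.\ $H^0(\OO_Y(K_Y+\sigma)\otimes\II_P)=0$, by a geometric argument --- the unique curve $C\in|K_Y+\sigma|$ is irreducible of arithmetic genus $2$, its dualizing sheaf $(2K_Y+\sigma)|_C$ is base-point free by Hartshorne's theorem on Gorenstein curves, hence $P\notin C$ --- whereas you obtain simplicity for free from slope stability. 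Your shortcut is valid and cleaner, but it requires opening up Lemma~\ref{conditions} rather than citing it as stated, since that lemma needs all four hypotheses to conclude exceptionality; you should note explicitly that its proof derives $\Ext^2(E,E)=0$ from condition~(\ref{condition2}) together with $L\cdot K_Y>0$ and $L\neq\OO_Y(K_Y)$ alone, without using condition~(\ref{condition1}). Your $\Ext^1$ computation (identifying the connecting map $\Ext^2(\OO_P,\OO_Y(-K_Y-\sigma))\to H^2(\OO_Y(-K_Y-\sigma))$ as dual to evaluation at the base point) is an equivalent reformulation of the paper's local-to-global spectral sequence argument, which instead deduces nonvanishing of the map $\Ext^1\to H^0(\mathcal Ext^1)$ from the existence of a locally free extension; both routes use the same two inputs, Kodaira vanishing and the base-point hypothesis.
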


 The proof breaks into three lemmas.

\begin{lem} We can define a vector bundle $E$ using the exact sequence~(\ref{K}), and such a vector bundle is stable with respect to $K_Y$. Moreover, $E$ is uniquely determined by $\sigma$ and $P$. Conversely, if $E$ is exceptional vector bundle with $c_1(E) = K_Y \mod \Tors Y$ and $c_2(E)=1$,  stable with respect to $K_Y$, then up to $E\leadsto E(\tau)$, $\tau \in \Tors Y$,  it is given by an exact sequence~(\ref{K}).
 \end{lem}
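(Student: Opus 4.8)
The plan is to establish the four assertions --- that (\ref{K}) defines a bundle, that it is $K_Y$-stable, that it is determined by $(\sigma,P)$, and the converse --- essentially in that order. For existence and local freeness I would invoke Theorem~\ref{Hlt} with $X=Y$, $Z=\{P\}$ (a reduced point, hence a local complete intersection of codimension $2$), $L=\OO_Y$ and $M=\OO_Y(K_Y+\sigma)$. The line bundle governing the Cayley--Bacharach condition is then $L^{\vee}\otimes M\otimes K_Y=\OO_Y(2K_Y+\sigma)$; since $\length Z=1$ the only proper subscheme of $Z$ is empty, so (CB) reduces to saying that every section of $\OO_Y(2K_Y+\sigma)$ vanishes at $P$, i.e.\ that $P$ is a base point of $|2K_Y+\sigma|$ --- exactly the hypothesis. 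Theorem~\ref{Hlt} then produces a locally free $E$ fitting in (\ref{K}) with $c_1(E)=K_Y+\sigma$ and $c_2(E)=c_1(\OO_Y)\cdot(K_Y+\sigma)+\length Z=1$. For uniqueness it suffices to check that $\Ext^1(\OO_Y(K_Y+\sigma)\otimes\II_P,\OO_Y)$ is one-dimensional: then the nontrivial extension class --- which exists by the above --- is unique up to scalar, so $E$ is unique up to isomorphism. Applying $\Ext^\bullet(-,\mathcal L)$ with $\mathcal L:=\OO_Y(-K_Y-\sigma)$ to $0\to\II_P\to\OO_Y\to\OO_P\to 0$, and using that the local $\Ext$ sheaves $\mathcal{E}xt^j(\OO_P,\OO_Y)$ vanish for $j\neq 2$ and equal $\OO_P$ for $j=2$, yields an exact sequence $0\to H^1(\mathcal L)\to\Ext^1(\II_P,\mathcal L)\to\C\xrightarrow{\delta}H^2(\mathcal L)$. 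Here $H^1(\mathcal L)\cong H^1(\OO_Y(2K_Y+\sigma))^{\vee}=0$ by Kawamata--Viehweg vanishing ($2K_Y+\sigma=K_Y+(K_Y+\sigma)$, with $K_Y+\sigma$ nef and big since it is numerically equal to the ample $K_Y$), and Serre duality identifies $\delta$ with the dual of the evaluation $H^0(\OO_Y(2K_Y+\sigma))\to\OO_Y(2K_Y+\sigma)\otimes k(P)$, which is zero because $P$ is a base point; hence $\Ext^1(\II_P,\mathcal L)\cong\C$.

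For stability, I would observe that $(K_Y+\sigma)\cdot K_Y=K_Y^2=1$, so $\mu(E)=\tfrac12$, and $K_Y$-stability of the rank-$2$ bundle $E$ amounts to $D\cdot K_Y\leq 0$ for every saturated rank-$1$ subsheaf $\OO_Y(D)\hookrightarrow E$. Composing such an inclusion with the projection $E\to\OO_Y(K_Y+\sigma)\otimes\II_P$: either the composite is zero --- in which case $\OO_Y(D)\subseteq\OO_Y$ and saturatedness forces $D=0$ --- or it is nonzero, giving $\OO_Y(D)\hookrightarrow\OO_Y(K_Y+\sigma)$, so $K_Y+\sigma-D$ is effective and $D\cdot K_Y\leq 1$. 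Equality would force $K_Y+\sigma-D=0$, making $\OO_Y(D)\cong\OO_Y(K_Y+\sigma)$ inject into $\OO_Y(K_Y+\sigma)\otimes\II_P$, impossible since $H^0(\II_P)=0$. Thus $D\cdot K_Y\leq 0<\tfrac12$ in all cases.

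For the converse, let $E$ be exceptional, $K_Y$-stable, with $c_1(E)\equiv K_Y\pmod{\Tors Y}$ and $c_2(E)=1$, and write $c_1(E)=K_Y+\sigma_0$, $\sigma_0\in\Tors Y$. Riemann--Roch gives $\chi(E(\tau))=1$ for every $\tau\in\Tors Y$, the Chern numbers of $E(\tau)$ being those of $E$. The key step is to produce a nonzero section after a torsion twist: since $E^{\vee}\cong E(-c_1(E))$ (rank $2$), one has $E^{\vee}\otimes\omega_Y\cong E(-\sigma_0)$, so Serre duality gives $h^2(E(\tau))=h^0(E(-\sigma_0-\tau))$, and summing over $\tau\in\Tors Y$ yields $\sum_\tau h^0(E(\tau))=\sum_\tau h^2(E(\tau))$, whence $2\sum_\tau h^0(E(\tau))\geq\sum_\tau\chi(E(\tau))=|\Tors Y|\geq 1$, so $h^0(E(\tau))\neq 0$ for some $\tau$. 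Replacing $E$ by $E(\tau)$ --- still exceptional and $K_Y$-stable by Lemma~\ref{intro:thm:exc}, still with $c_2=1$ and $c_1\equiv K_Y$ --- I would take $0\neq s\in H^0(E)$, write its zero scheme as $D+Z_0$ with $D$ the effective divisorial part and $Z_0$ zero-dimensional, and obtain $0\to\OO_Y(D)\to E\to\OO_Y(c_1(E)-D)\otimes\II_{Z_0}\to 0$. Saturating $\OO_Y(D)$ and applying the stability argument above forces $D\cdot K_Y\leq 0$; as $D$ is effective and $K_Y$ ample, $D=0$. Then $\length Z_0=c_2(E)=1$, so $Z_0=\{P\}$ is a reduced point, and we have an extension of the form (\ref{K}) with $\sigma=c_1(E)-K_Y\in\Tors Y$ (and $P$ is automatically a base point of $|2K_Y+\sigma|$ by Theorem~\ref{Hlt}). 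I expect the main obstacle to be precisely this last move --- forcing a section to exist after twisting by some torsion line bundle; the averaging identity $\sum_\tau h^0(E(\tau))=\sum_\tau h^2(E(\tau))$ (equivalently, the nonvanishing of $H^0$ of the pullback of $E$ to the \'etale $\Tors Y$-cover of $Y$) is what makes it work, and one must handle with care the vanishing $H^1(\OO_Y(2K_Y+\sigma))=0$ and the rank-$2$ self-duality $E^{\vee}\cong E\otimes(\det E)^{-1}$ used throughout.
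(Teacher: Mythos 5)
Your proof is correct and follows essentially the same route as the paper: Serre construction with the Cayley--Bacharach condition for existence, the same destabilizing-line-bundle analysis for stability, $\chi(E)=1$ plus Serre duality ($E^{\vee}\otimes\omega_Y\cong E(-\sigma_0)$) to produce a section after a torsion twist for the converse, and $\Ext^1(\OO_Y(K_Y+\sigma)\otimes\II_P,\OO_Y)\cong\C$ for uniqueness. The only (harmless) divergence is in that last computation, where you identify the connecting map with the dual of evaluation at the base point $P$, whereas the paper deduces nonvanishing of the injection $\Ext^1\hookrightarrow H^0(\mathcal Ext^1)\cong\C$ from the existence of a locally free extension guaranteed by (CB).
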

\begin{proof}   The (CB) condition in this case is satisfied for $P$ being a base point of $|K_Y+L| = |2K_Y + \sigma|$. So we can define a vector bundle $E$ using~(\ref{K}).
We need  to show that such a vector bundle $E$ is stable with respect to $K_Y$. Suppose that $E$ is not stable with respect to $K_Y$. Then since $ \mu(E)=\frac{1}{2}c_1(E).K_Y=\frac{1}{2}$, there is an effective divisor $D$ such that $D\cdot K_Y>0$ and a nonzero map $\OO_Y(D)\hookrightarrow E$. Write $D=nK_Y+\beta$, where $n= D\cdot K_Y>0$ and $\beta \in K_Y^{\perp}$. Consider the exact sequence
\begin{equation*}
0\to \OO_Y(-D)\to E(-D)\to \OO_Y(K_Y+\sigma)\otimes \OO_Y(-D)\otimes \II_P\to 0,
\end{equation*}
obtained by tensoring (\ref{K}) by $\OO_Y(-D)$.

We have $H^0(\OO_Y(-D))=0$ because $D\cdot K_Y>0$,  and $$H^0(E(-D))=\Hom (\OO_Y(D), E)\neq 0$$ by assumption, so
 \begin{equation*}
 H^0(\OO_Y(K_Y+\sigma-nK_Y-\beta)\otimes \II_P)\neq 0.
 \end{equation*}
 This is impossible for $n>1$. If $n=1$ then $H^0(\OO_Y(\sigma -\beta))=0$ unless we have $\sigma - \beta=0$. But then $H^0(\OO_Y\otimes \II_P) = H^0(\II_P)=0$. So such a divisor $D$ does not exist and $E$ is stable with respect to $K_Y$.
 
 \medskip

 Conversely, if $E$ is an exceptional vector bundle of rank 2 with $c_1(E) = K+ \sigma$, then according to the formula~(\ref{c2}), the second Chern class $c_2(E)$ is fixed and is equal to $ 1$. 
 
 We need to check that $E$ can be defined by the exact sequence~(\ref{K}) for some $\sigma$ and $P$. First, let us show that there exists a section $s\in H^0(E)$, or equivalently, that $h^0(E)>0$. By the Riemann--Roch formula $\chi(E) = 1$. Also by  Serre Duality $h^2(E) = h^0(E^{\vee}\otimes K) = h^0(E(-\sigma))$ since  have $E\simeq  E^{\vee}\otimes \det E = E^{\vee}\otimes \OO_Y(K_Y+\sigma)$.

 So $0<\chi(E)=h^0(E)-h^1(E)+h^0(E(-\sigma))\leq h^0(E)+h^0(E(-\sigma))$. Thus $h^0(E)>0$, or $h^0(E(-\sigma))>0$, so, possibly after a twist $E\leadsto E(-\sigma)$, there is  a nonzero global section of $E$.
 
 Now we need to check that a nonzero global  section of $E$ has isolated zeroes, assuming that $E$ is $K_Y$-stable. Suppose that it does not, so there is a map $\OO(D) \to E$ for some effective divisor $D$. But then $\mu(\OO(D)) = D.K\geq 1$ and $\mu(E) = \frac{1}{2}K^2= \frac{1}{2}$, which contradicts stability of  $E$. 
 
 Finally, we have $$0\to \OO_Y\to E \to L\otimes \II_Z\to 0,$$ 
 where $c_1(E)=L$, $c_2(E) = \length(Z)$. Then $L = K_Y+ \sigma$, for some $\sigma \in \Tors Y$, and $Z=P$ is a reduced point. Now by Theorem~\ref{Hlt}, $P$ is a base point of $|2K_Y + \sigma|$.

Now let us show that such $E$ is unique. It suffices to show that $\Ext^1(\OO_Y(K_Y+\sigma)\otimes \II_P, \OO_Y)\simeq \C$.  By  local-to-global spectral sequence for $\Ext$ we have an exact sequence.
\begin{equation*}
0\to H^1(\mathcal Hom )\to \Ext^1\to H^0(\mathcal Ext^1)\to H^2(\mathcal Hom)
\end{equation*}

Now $\mathcal \Hom (\OO_Y(K_Y+\sigma)\otimes \II_P, \OO_Y)= \OO_Y(-K_Y-\sigma)$, and $H^1(\OO_Y(-K_Y-\sigma))=0$ by Kodaira vanishing.
Also $\mathcal Ext^1 (\OO_Y(K_Y+\sigma)\otimes \II_P, \OO_Y)\simeq \mathcal Ext^1(\II_P, \OO_Y)\simeq k(P)$, so that $H^0(\mathcal Ext^1(\OO_Y(K_Y+\sigma)\otimes \II_P, \OO_Y))\simeq \C$. 

So we have an exact sequence
\begin{equation*}
0\to \Ext^1\xrightarrow{\alpha} H^0(\mathcal Ext^1)\to H^2(\mathcal Hom)
\end{equation*}

Since  the Cayley--Bacharach condition is satisfied,  there exists an extension which is a vector bundle. So $\alpha\neq 0$, thus $\Ext^1(\OO_Y(K_Y+\sigma)\otimes \II_P, \OO_Y)\simeq \C$. 

\end{proof}

\begin{lem}
\label{noexK}
If $c_1(E) = K_Y+\sigma$,where $\sigma \in 2 \Tors Y$,  then the vector bundle $E$ defined by the exact sequence~(\ref{K}) is not exceptional.
\end{lem}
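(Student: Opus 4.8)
The plan is to combine a numerical identity with a single non-vanishing. Write $\sigma=2\tau$ for some $\tau\in\Tors Y$ and set $F:=E\otimes\OO_Y(-\tau)$. By Lemma~\ref{intro:thm:exc} it suffices to prove that $F$ is not exceptional, and the purpose of the twist is that now $c_1(F)=c_1(E)-2\tau=K_Y+\sigma-2\tau=K_Y$, so that $\det F=\OO_Y(K_Y)=\omega_Y$ \emph{exactly}. Twisting~(\ref{K}) by $\OO_Y(-\tau)$ and applying Theorem~\ref{Hlt} (equivalently, a twist by a line bundle with torsion first Chern class changes neither $c_1^2$ nor $c_2$) gives $c_1(F)^2=K_Y^2=1$ and $c_2(F)=1$. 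Since $\End F=\End E$, the Hirzebruch--Riemann--Roch computation in the proof of Lemma~\ref{thm:c2}, applied to $F$, yields
$$\chi(\End F)=4\chi(\OO_Y)+c_1(F)^2-4c_2(F)=4+1-4=1,$$
that is, $\dim\Hom(F,F)-\dim\Ext^1(F,F)+\dim\Ext^2(F,F)=1$.

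The substance of the argument is to show $\Ext^2(F,F)\neq 0$. Because $F$ has rank $2$, the perfect pairing $F\otimes F\to\wedge^2F=\det F$ gives $F^\vee\cong F\otimes(\det F)^{-1}$, which here reads $F^\vee\cong F\otimes\omega_Y^{-1}$, i.e. $F\cong F^\vee\otimes\omega_Y$. By Serre duality this forces $h^0(F)=h^2(F)$, while Riemann--Roch gives $\chi(F)=2\chi(\OO_Y)+\tfrac12 c_1(F)\cdot(c_1(F)-K_Y)-c_2(F)=2+0-1=1$; hence $2h^0(F)-h^1(F)=1$ and $F$ admits a nonzero global section $s$. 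Since $F$ is locally free, $s\otimes s$ is a nonzero section of $F\otimes F$, and under the isomorphism $F\otimes F\cong F^\vee\otimes F\otimes\omega_Y=\cHom(F,F\otimes\omega_Y)$ (using $F^\vee\otimes\omega_Y\cong F$ once more) it produces a nonzero element of $\Hom(F,F\otimes\omega_Y)$. By Serre duality $\Ext^2(F,F)\cong\Hom(F,F\otimes\omega_Y)^*$, so $\Ext^2(F,F)\neq 0$.

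Now the identity from the first paragraph closes the argument: $\dim\Hom(F,F)\geq 1$ (the identity endomorphism) and $\dim\Ext^2(F,F)\geq 1$, so $\dim\Ext^1(F,F)=\dim\Hom(F,F)+\dim\Ext^2(F,F)-1\geq 1$. Thus $\Ext^1(F,F)\neq 0$, $F$ is not exceptional, and therefore neither is $E$.

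I do not expect a genuine obstacle here; the only delicate point is bookkeeping with the torsion twists. The hypothesis $\sigma\in 2\Tors Y$ is used exactly once, to make the twist $F=E(-\tau)$ available so that $\det F=\omega_Y$ holds on the nose, and hence the self-duality $F\cong F^\vee\otimes\omega_Y$, which is the hinge of the proof. (For $\sigma\notin 2\Tors Y$ this fails, consistently with the fact, established elsewhere in this section, that such $E$ can be exceptional.) One should also invoke Serre duality in the form valid for the locally free sheaf $F$, and check that $s\otimes s\neq 0$, which is immediate pointwise.
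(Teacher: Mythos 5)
Your proof is correct, and its skeleton matches the paper's: twist by $\tau$ so that $\det F\cong\omega_Y$, exhibit a nonzero element of $\Hom(F,F\otimes\omega_Y)$, and conclude $\Ext^2(F,F)\neq 0$ by Serre duality (which already contradicts exceptionality; your final step deducing $\Ext^1(F,F)\neq 0$ from $\chi(\End F)=1$ is harmless but not needed). The one genuine difference is how the nonzero map $F\to F\otimes\omega_Y$ is produced. The paper reads it off the presentation~(\ref{K}): it composes the surjection $E\to\OO_Y(K_Y)\otimes\II_P$ with the inclusions $\OO_Y(K_Y)\otimes\II_P\subset\OO_Y(K_Y)\subset E\otimes\OO_Y(K_Y)$, the last arrow being the defining section tensored with $\OO_Y(K_Y)$. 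You instead ignore the extension entirely (beyond extracting $c_1$ and $c_2$), prove $h^0(F)\geq 1$ from $\chi(F)=1$ together with the self-duality $F\cong F^{\vee}\otimes\omega_Y$, and take $s\otimes s$ under $F\otimes F\cong\cHom(F,F\otimes\omega_Y)$; unwinding the identification $E^{\vee}\cong E\otimes(\det E)^{-1}$ shows this is essentially the same homomorphism. Your route buys a little robustness: the twisted bundle $E\otimes\OO_Y(-\tau)$ is not literally presented by a sequence of the form~(\ref{K}) (its sub-line-bundle is $\OO_Y(-\tau)$, not $\OO_Y$), so the paper's reduction "then $E\otimes\OO(-\tau)$ has $c_1=K_Y$, hence is not exceptional by the first case" tacitly needs the observation that the first case depends only on having a section with torsion-free cokernel; your argument needs no such remark since it manufactures the section numerically.
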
  
\begin{proof}First suppose that $c_1(E)=K_Y$. We have inclusions $\OO_Y(K_Y)\otimes\II_P\subset\OO_Y( K_Y)\subset E\otimes \OO_Y(K_Y)$ and a surjection $E\to \OO_Y(K_Y)\otimes \II_P$ coming from the exact sequence~(\ref{K}). Thus there exists a nonzero map $f:E\to E\otimes\OO_Y( K_Y)$, defined by the composition of surjection followed by inclusion. But $\Ext^2(E,E)=H^2(\mathcal Hom(E,E))= H^0(\mathcal Hom(E,E)^{\vee}\otimes \omega_Y)^*= \Hom(E,E\otimes \omega_Y)^*$ by Serre duality. Thus since $f\in \Hom(E, E\otimes \omega_Y)$, we conclude that $\Ext^2(E,E)\neq 0$, so $E$ is not exceptional.

If $\sigma = 2\tau\in \Tors Y$, then $E\otimes \OO(-\tau)$ has $c_1 = K_Y$, thus $E\otimes \OO_Y(-\tau)$ is not exceptional, so $E$ is not exceptional by Theorem~\ref{intro:thm:exc}. 
\end{proof}

\begin{lem} 
Let $Y$ be a Godeaux surface, $\sigma \in \Tors Y/2\Tors Y$, $P\in |2K_Y+\sigma|$, and $E$ the associated vector bundle defined by~(\ref{K}). Then $E$ is exceptional if  $P$ is a smooth point of the base locus of $|2K_Y+ \sigma|$.
\end{lem}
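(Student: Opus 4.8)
The plan is to reduce exceptionality of $E$ to a single cohomological vanishing and then read that vanishing off the hypothesis on $P$. Put $L=\OO_Y(K_Y+\sigma)$. By Theorem~\ref{Hlt} the extension~(\ref{K}) has $c_1(E)=K_Y+\sigma$ and $c_2(E)=1$, and since torsion classes carry no intersection numbers, $c_1(E)^2=(K_Y+\sigma)^2=K_Y^2=1$, so $c_2(E)=\tfrac14(c_1(E)^2+3)$. By Lemma~\ref{thm:c2} this forces $\chi(\End E)=1$. We already know $E$ is stable with respect to $K_Y$, hence simple, so $\Hom(E,E)=\C$; combined with $\chi(\End E)=\dim\Hom(E,E)-\dim\Ext^1(E,E)+\dim\Ext^2(E,E)=1$ this gives $\dim\Ext^1(E,E)=\dim\Ext^2(E,E)$, so $E$ is exceptional if and only if $\Ext^2(E,E)=0$.

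By Serre duality $\Ext^2(E,E)\cong\Hom(E,E\otimes\omega_Y)^\vee$, so it suffices to show $\Hom(E,E\otimes\omega_Y)=0$. Here I would reuse the computation in the proof of Lemma~\ref{conditions}: the part of that proof showing $\Ext^2(E,E)=0$ uses only that $L\neq K_Y$ as a line bundle, that $p_g(Y)=q(Y)=0$, and the vanishing $H^0(\omega_Y\otimes L\otimes\II_P^2)=0$; it does \emph{not} use the hypothesis $H^0(L\otimes\II_P)=0$, which we cannot verify here anyway (for $\sigma\neq 0$ one has $h^0(K_Y+\sigma)=1$ and the unique section need not avoid $P$) and whose role is played instead by stability. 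Concretely: tensor~(\ref{K}) by $\omega_Y$ and apply $\cHom(E,-)=E\otimes L^\vee\otimes-$ (using $E^\vee\cong E\otimes L^\vee$); on global sections this exhibits $\Hom(E,E\otimes\omega_Y)$ as an extension of a subspace of $H^0(E\otimes\omega_Y\otimes\II_P)$ by $H^0(E\otimes L^\vee\otimes\omega_Y)$, and the latter vanishes because the exact sequence obtained by tensoring~(\ref{K}) with $L^\vee\otimes\omega_Y=\OO_Y(-\sigma)$ has outer terms $H^0(\OO_Y(-\sigma))=0$ ($\sigma$ a nonzero torsion class) and $H^0(\omega_Y\otimes\II_P)\subseteq H^0(\omega_Y)=0$. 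It is essential here that $\sigma\neq 0$, so that the excluded case $L=K_Y$ of Lemma~\ref{noexK} does not arise. The diagram chase of Lemma~\ref{conditions} (using $H^0(\omega_Y)=H^1(\omega_Y)=0$ and that the section of $E$ defining~(\ref{K}) vanishes along $P$) then identifies $H^0(E\otimes\omega_Y\otimes\II_P)$ with $H^0(\OO_Y(2K_Y+\sigma)\otimes\II_P^2)$, so everything comes down to $H^0(\OO_Y(2K_Y+\sigma)\otimes\II_P^2)=0$.

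Finally I would translate the hypothesis on $P$ into this vanishing. Riemann--Roch gives $\chi(2K_Y+\sigma)=2$, and Kawamata--Viehweg vanishing applied to $2K_Y+\sigma=K_Y+(K_Y+\sigma)$ with $K_Y+\sigma$ nef and big (it is numerically $K_Y$) gives $h^1=h^2=0$, so $h^0(2K_Y+\sigma)=2$ and $|2K_Y+\sigma|$ is a pencil. As $P$ is a base point, every section vanishes at $P$, so $H^0(\OO_Y(2K_Y+\sigma)\otimes\II_P)$ is all of $H^0(\OO_Y(2K_Y+\sigma))$, and $H^0(\OO_Y(2K_Y+\sigma)\otimes\II_P^2)$ is the kernel of the first-jet map $H^0(\OO_Y(2K_Y+\sigma))\to\mathfrak m_P/\mathfrak m_P^2$. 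The condition that $P$ is a smooth, zero-dimensional point of the base scheme of the pencil is exactly that the two basis sections cut $P$ out transversally, i.e. that this jet map is injective --- equivalently that no member of $|2K_Y+\sigma|$ is singular at $P$ --- which is the required vanishing. Thus $\Ext^2(E,E)=0$ and $E$ is exceptional.

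Most of the work is a transcription of the proof of Lemma~\ref{conditions}, the only subtleties being to draw $\Hom(E,E)=\C$ from stability rather than from an unverified vanishing, and to remember that $\sigma$ is torsion (so in the ``$L=mK_Y+A$'' bookkeeping of Lemma~\ref{conditions} one has $m=1$, $A=\sigma\neq 0$, which avoids the $L=K_Y$ exception). I expect no real obstacle; the one genuinely new point is the last paragraph --- unwinding ``smooth point of the base locus'' into $H^0(\OO_Y(2K_Y+\sigma)\otimes\II_P^2)=0$ and checking $h^0(2K_Y+\sigma)=2$ so that ``simple base point'' and ``jet map injective'' coincide.
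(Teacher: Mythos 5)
Your proof is correct, and for the $\Ext^2(E,E)=0$ half it follows the paper's own route: the $\Ext^2$ portion of the proof of Lemma~\ref{conditions} indeed uses only $L\neq K_Y$ (i.e.\ $\sigma\neq 0$, which holds since $\sigma\notin 2\Tors Y$) together with the vanishing $H^0(\omega_Y\otimes L\otimes \II_P^2)=0$, and your unwinding of ``smooth/simple base point'' into $H^0(\OO_Y(2K_Y+\sigma)\otimes\II_P^2)=0$ via $h^0(2K_Y+\sigma)=2$ is exactly what the paper does (it cites Reid's Lemma~0.3 for $h^0=2$ where you use Riemann--Roch plus Kawamata--Viehweg; both are fine). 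Where you genuinely diverge is the simplicity of $E$: the paper verifies condition~(\ref{condition1}), namely $H^0(\OO_Y(K_Y+\sigma)\otimes\II_P)=0$, by a geometric argument --- the unique curve $C\in|K_Y+\sigma|$ is irreducible of arithmetic genus $2$, $(2K_Y+\sigma)|_C=\omega_C$, and $\omega_C$ is base-point free on an irreducible Gorenstein curve of positive arithmetic genus by Hartshorne's theorem, so $C$ cannot pass through the base point $P$ --- whereas you deduce $\Hom(E,E)=\C$ from the $K_Y$-stability of $E$ established in the first lemma of the proof of Theorem~\ref{Kvb}. Your route is legitimate (slope-stable implies simple) and arguably more economical since stability is already in hand. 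Note, however, that your parenthetical claim that the unique member of $|K_Y+\sigma|$ ``need not avoid $P$'' is actually false --- the paper's argument shows it always avoids $P$ --- and the resulting vanishing $H^0(\OO_Y(K_Y+\sigma)\otimes\II_P)=0$ is reused afterwards (to get $h^0(E)=1$, so that $P$ is recovered as the zero locus of the unique section and the four bundles are pairwise non-isomorphic), so the paper's extra work is not redundant even though your proof of this lemma does not need it.
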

\begin{proof}

Here we have  $L =\OO_Y( K_Y + \sigma)$. By Lemma~\ref{conditions} we only need to check that conditions ~(\ref{condition1}) and ~(\ref{condition2}) are satisfied.

Let $P$ be a point in the base locus. First, we show that the condition~(\ref{condition1}) is satisfied for all Godeaux surfaces $Y$. By~\cite{Reid}, Lemma 0.3, we have  $H^0(K_Y+\sigma)\simeq \C$, therefore $|K_Y+\sigma|\neq \emptyset$.  Let $C\in |K_Y+\sigma|$, then $C$ is an irreducible curve since $K_Y\cdot C = K_Y^2=1$. Suppose that $P\in C$, then $(2K_Y+\sigma)|_C = K_C$ by the adjunction formula. 
More precisely, $\omega_C = \omega_Y^{\otimes 2}(\sigma)|_C$, where $\omega_C$ is the dualizing sheaf of $C$. 
We have an exact sequence 
$$0\to \omega_Y\to \omega_Y^{\otimes 2}(\sigma)\to \omega_C\to 0,$$
so $H^0(\omega_Y^{\otimes 2}(\sigma))\twoheadrightarrow H^0(\omega_C)$ because $H^1(\omega_Y)= H^1(\OO_Y)^*=0$.  It follows that $P$ is a base point of $\omega_C$.

By the adjunction formula $$2p_a(C)-2  = \deg \omega_C = (K_Y + C)\cdot C  = (2K_Y +\sigma)\cdot (K_Y + \sigma)=2,$$ so $p_a(C)=2$.

But by \cite{Hartshorne}, Theorem 1.6   if $C$ is a projective, irreducible Gorenstein curve with $p_a(C)>0$, then $\omega_C$ is base point free.  Thus $P\not\in C$, so 
$H^0(\OO_Y(K_Y+\sigma)\otimes \II_P) = 0$ and   $\Hom(E, E) = \C$.

\medskip

So we just need to check the condition (\ref{condition2}). By~\cite{Reid}, Lemma 0.3, we have  $\dim H^0(2K_Y+\sigma)=2$, so $|2K_Y+\sigma|\simeq \Pro^1$. Then $P$ is a simple base point of $|2K_Y+\sigma|$ if and only if there exist $C_1$ and $C_2\in |2K_Y+\sigma|$ given locally by $(x=0)$ and $(y=0)$, where $x$ and $y$ are some local coordinates at $P$, and $P=C_1\cap C_2$, so that the general $C\in |2K_Y+\sigma|$ is given locally by $( \lambda x+\mu y=0)$ for some $\lambda, \mu\in \C$. This is equivalent to $H^0(\OO_Y(2K_Y+\sigma)\otimes \II_P^2)=0$.
\end{proof}

This completes the proof of the Theorem~\ref{Kvb}.

\medskip

In the proposition below we provide an explicit construction of a Godeaux surface satisfying all assumptions of Theorem~\ref{Kvb}.

\begin{prop} There exists a Godeaux surface $Y$ with $H_1(Y, \Z)= \Z/4\Z$, such that $|2K_Y+\sigma|$  has simple base points, where $\sigma \in \Tors Y$ is a torsion element of order 4.
\end{prop}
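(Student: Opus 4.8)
The plan is to exhibit an explicit Godeaux surface $Y$ with $H_1(Y,\Z) = \Z/4\Z$ — most naturally one of the surfaces whose universal cover $\overline{Y} \subset \Pro(1^3,2^2)$ appears in the proof of Proposition~\ref{p4}, following Reid's description — and to verify the base-point condition by an explicit coordinate computation on $\overline{Y}$, descending to $Y$. Recall from \cite{Reid}, Lemma~0.3, that for a nonzero torsion element $\sigma$ of order $4$ we have $\dim H^0(Y, 2K_Y+\sigma) = 2$, so $|2K_Y+\sigma|$ is a pencil $\simeq \Pro^1$; by Theorem~\ref{Kvb} the only thing to check is that the two base points of this pencil are simple, i.e. that $H^0(\OO_Y(2K_Y+\sigma)\otimes \II_P^2)=0$ at each base point $P$, equivalently that the two generating sections $C_1, C_2 \in |2K_Y+\sigma|$ meet transversally at $P$.

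First I would pull the linear system $|2K_Y+\sigma|$ back to the $\Z/4\Z$-cover $\overline{Y}$. On the cover, $K_{\overline{Y}}$ is the hyperplane class $H$ from $\Pro(1^3,2^2)$ and the four characters of $\Z/4\Z$ decompose each $H^0(\overline{Y}, mH)$ into eigenspaces; the twist by $\sigma$ corresponds to taking the appropriate character eigenspace. Concretely, sections of $2K_Y+\sigma$ correspond to weight-$j$ (for the appropriate $j \in \Z/4\Z$ determined by $\sigma$) polynomials of degree $2$ in $x_1,x_2,x_3,y_1,y_3$ on $\Pro(1^3,2^2)$, restricted to $\overline{Y}$; the weight-$2$ eigenspace in degree $2$ is spanned by things like $y_1, y_3, x_ix_j$ with the correct character, and a dimension count against Reid's data gives the two-dimensional pencil. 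Then I would write down the base locus of this pencil as a subscheme of $\overline Y$ — it consists of the $\Z/4\Z$-orbit of $P$, a length-$4$ reduced scheme — and check transversality by computing the Jacobian of the two generators together with the two defining quartics $q_0, q_2$ of $\overline{Y}$ at one base point, verifying the combined Jacobian has full rank. This is exactly the kind of quasismoothness / transversality computation already used in Propositions~\ref{p4} and~\ref{p3}, and can be done by hand or verified in Macaulay2.

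The main obstacle is that the statement is not true for \emph{every} Godeaux surface with $H_1 = \Z/4\Z$ — for special members of the family the pencil $|2K_Y+\sigma|$ could acquire a tangency or a fixed component — so the argument must be genericity-based: I would fix a generic choice of the quartics $q_0,q_2$ (equivalently a generic point of Reid's moduli space, which is known to be irreducible and unirational in the $\Z/4\Z$ case) and show the transversality condition is an open nonempty condition. Nonemptiness is the real content, and the cleanest route is to produce one explicit $(q_0,q_2)$ — for instance a symmetric choice analogous to the $f_0,f_2$ of Proposition~\ref{p4} — for which a direct computation confirms the two base points are simple; openness in the moduli is then automatic since $H^0(\OO_Y(2K_Y+\sigma)\otimes\II_P^2)=0$ is an open condition on the (proper, flat) family of pairs $(Y,P)$. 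I would carry the steps out in this order: (1) recall Reid's structure theorem and the dimensions $h^0(K_Y+\sigma)=1$, $h^0(2K_Y+\sigma)=2$; (2) identify the pencil $|2K_Y+\sigma|$ explicitly on the cover via character eigenspaces; (3) compute the base scheme and its Jacobian for a specific symmetric example; (4) conclude simplicity for that example, hence for a general $Y$ in the component. Combined with Theorem~\ref{Kvb}, this produces the desired $K_Y$-stable exceptional rank-$2$ bundle on $Y$.
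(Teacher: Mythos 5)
Your proposal follows essentially the same route as the paper: the paper's proof writes down an explicit $\Z/4\Z$-invariant complete intersection $F_{4,4}=\{q_0=q_2=0\}\subset\Pro(1,1,1,2,2)$, identifies the pencil $|2K_Y+\sigma|$ on the cover as the weight-one degree-two eigenspace $\langle x_2x_3,\,y_1\rangle$, and checks simplicity by exhibiting the base locus as $16$ distinct points on the cover (hence $4$ on $Y$), which is your steps (1)--(4). The only quibble is your count: since $(2K_Y+\sigma)^2=4$ the quotient pencil has four base points, not two, and the base scheme upstairs has length $16$, not $4$; this does not affect the argument.
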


\begin{proof}

We use the notations of~\cite{Reid} to describe such a surface.

In the weighted projective space $\Pro(1,1,1,2,2)$ with coordinates $x_1$, $x_2$, $x_3$, $y_1$, $y_3$ consider the surface $F_{4,4} = \{q_0=q_2=0\}\subset \Pro(1,1,1,2,2)$ defined by the  complete intersection of two quartics
$$q_0= x_1^4+ x_2^4+x_3^4 + y_1y_3,$$
$$q_2 = x_1^3x_3+ x_1x_3^3 +x_1x_2y_3+  y_1^2+ y_3^2.$$
 Consider the action of $\Z/4\Z$ on $F_{4,4}$ induced from the action of $\Z/4\Z$ on the weighted projective space $\Pro(1,1,1,2,2)$  defined by $x_i \mapsto \zeta^i x_i$, $y_i\mapsto \zeta^i y_i$, where $\zeta$ is a primitive fourth root of unity.

 Then the quotient of  $F_{4,4}$ under the $\Z/4\Z$ action is a smooth Godeaux surface $Y$ with $H_1(Y)= \Z/4\Z$~\cite{Reid}. 
For this surface and $\sigma = 1\in \Z/4\Z$,  the linear system $|2K_Y+\sigma|$ has simple base points, so the condition~(\ref{condition2}) is satisfied for this particular surface. 
  Note that $\{q_0=q_2=0\}$ define the smooth \'etale cover $F_{4,4}$ of a Godeaux surface $Y$ in $\Pro(1,1,1,2,2)$.
The base locus of $|2K+\sigma|$ is the union of the sets $\{x_3=y_1=0\}\cap F_{4,4}$ and $\{x_2=y_1=0\}\cap F_{4,4}$. It is not hard to show that it consists of 16 distinct points in $\Pro(1,1,1,2,2)$, namely $(1, \zeta, 0,0,0)$, $(1, \zeta, 0,0,-\zeta)$, $(1,0, \zeta, 0,\pm\sqrt{-\zeta-\zeta^3})$, where $\zeta^4 = -1$.

Thus its quotient under the action of $\Z/4\Z$  consists of four distinct points, so $H^0( (2K+\sigma)\otimes \II_P^2)=0$. 
\end{proof}

Thus a vector bundle $E$ of Theorem~\ref{Kvb} is exceptional for a general Godeaux surface $Y$ with $\Tors Y = \Z/4\Z$. 
In the case $\Tors Y = \Z/2\Z$ it becomes significantly harder to write out equations for a surface. 
Stephen Coughlan showed by explicit calculation in Macaulay2 that condition (\ref{condition2})  is satisfied for a least one Godeaux surface, so that it holds in general, i.e, on a Zariski open subset of the irreducible component of moduli space containing Coughlan's surface.

Note that using the Theorem \ref{Kvb} we obtain four non-isomorphic vector bundles with the same rank, $c_1$, and $c_2$, based on the choice of the point $P$. Since $H^0(\OO_Y(K_Y+\sigma)\otimes\II_P)=0$, using the exact sequence (\ref{K}) we conclude  that $H^0(E) \simeq H^0(\OO_Y) \simeq \C$, thus there is exactly one section $s\in H^0(E)$ and $P$ is uniquely determined as  the zero locus of this section. 

\begin{cor} Let $Y$ be a general Godeaux surface with $H_1(Y, \Z)=\Z/4\Z$ and $\sigma \in \Tors Y$ is an element of order 4.
There exist at least four isomorphism classes of  exceptional vector bundles of rank 2 on $Y$ with  $c_1= K_Y+\sigma$ which are stable with respect to $K_Y$.
\end{cor}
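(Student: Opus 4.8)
The plan is to leverage Theorem~\ref{Kvb} directly, which already does all the heavy lifting. First I would recall that under the hypotheses ($Y$ general with $H_1(Y,\Z)=\Z/4\Z$, $\sigma\in\Tors Y$ of order $4$), the linear system $|2K_Y+\sigma|$ is a pencil $\cong\Pro^1$ with simple base points, by the explicit computation in the preceding proposition (and the fact that $h^0(2K_Y+\sigma)=2$ from \cite{Reid}, Lemma 0.3). The same proposition shows the base locus of $|2K_Y+\sigma|$ on $Y$ consists of exactly four distinct points $P_1,P_2,P_3,P_4$, each a simple base point.

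Next, for each $i$, I would apply Theorem~\ref{Kvb} with this fixed $\sigma$ and with $P=P_i$: there is a unique rank $2$ vector bundle $E_i$ fitting in the extension
\begin{equation*}
0\to\OO_Y\to E_i\to\OO_Y(K_Y+\sigma)\otimes\II_{P_i}\to 0,
\end{equation*}
and by that theorem $E_i$ has $c_1(E_i)=K_Y+\sigma$, $c_2(E_i)=1$, is stable with respect to $K_Y$, and is exceptional because $\sigma\in\Tors Y\setminus 2\Tors Y$ (note $\Tors Y=\Z/4\Z$ and $\sigma$ has order $4$, so $\sigma\notin 2\Tors Y$) and $P_i$ is a simple base point. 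So each $E_i$ is a $K_Y$-stable exceptional rank $2$ bundle with $c_1=K_Y+\sigma$.

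The only remaining point is that the four bundles $E_1,\dots,E_4$ are pairwise non-isomorphic. Here I would use the observation recorded just before the corollary: since $H^0(\OO_Y(K_Y+\sigma)\otimes\II_{P_i})=0$ (shown in the proof of Theorem~\ref{Kvb} via the base-point-freeness of $\omega_C$ for a curve $C\in|K_Y+\sigma|$), the long exact sequence of~(\ref{K}) gives $H^0(E_i)\cong H^0(\OO_Y)\cong\C$; thus $E_i$ has a unique (up to scalar) global section, and its zero scheme is forced to be $P_i$. Consequently $P_i$ is an isomorphism invariant of $E_i$, so $E_i\cong E_j$ would force $P_i=P_j$. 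Since the four base points are distinct, the $E_i$ are pairwise non-isomorphic, giving at least four isomorphism classes.

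I do not expect a serious obstacle: the statement is essentially a bookkeeping corollary of Theorem~\ref{Kvb} plus the explicit count of four base points. The one spot that needs a careful word is the non-isomorphism argument --- one must make sure the section of $E_i$ really is unique and that its vanishing locus is exactly the length-one scheme $P_i$ (rather than, say, a divisorial component), but this follows from $c_2(E_i)=1$, stability (which rules out a section vanishing on a divisor, since $D\cdot K_Y\geq 1>\tfrac12=\mu(E_i)$), and $h^0(E_i)=1$.
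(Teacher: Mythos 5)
Your proof is correct and follows essentially the same route as the paper: the paper likewise invokes Theorem~\ref{Kvb} for each of the four base points of $|2K_Y+\sigma|$ supplied by the preceding explicit construction, and distinguishes the resulting bundles by noting $H^0(E)\simeq H^0(\OO_Y)\simeq\C$, so that $P$ is recovered as the zero locus of the unique section. Your added remark that stability rules out a divisorial component in that zero locus is a harmless extra precaution beyond what the paper records.
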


In the  case $c_1(E)\not\sim K_Y \mod\Tors Y$, we provide  method of constructing non--stable exceptional vector bundles of rank 2 with $c_1(E)\not \sim K_Y \mod \Tors Y$.

\begin{thm}\label{nonstablebundle} Let $Z$ be a special Godeaux surface such that  $Z$ contains a $(-3)$-curve $C$. Let $Y$ be a small deformation of $Z$ such that $C$ does not deform to $Y$.  
 Define the line bundle  $L$ as the inverse  image of the line bundle   $\OO_Z(-C)$ under the isomorphism $H^2(Y, \Z) \to H^2(Z, \Z)$ given by specialization. Then there exists a unique extension 
\begin{equation}\label{def:f}
0\to \OO_Y\to F\to L\to 0,
\end{equation} 
where $F$ is an exceptional vector bundle of rank 2 on $Y$, but it is not $K_Y$--stable. 
\end{thm}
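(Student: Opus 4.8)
\textbf{Proof proposal for Theorem~\ref{nonstablebundle}.}

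The plan is to produce $F$ as a non-trivial extension of $L$ by $\OO_Y$ and then verify the three required properties: $F$ is locally free, $F$ is exceptional, and $F$ is not $K_Y$-stable. First I would record the numerical data. On the special surface $Z$, the $(-3)$-curve $C$ satisfies $C^2=-3$ and $K_Z\cdot C=1$ by adjunction, so $\OO_Z(-C)$ has $c_1^2=-3$ and $c_1\cdot K_Z=-1$; since specialization is an isometry of $H^2$ preserving $K$, the line bundle $L$ on $Y$ satisfies $L^2=-3$ and $L\cdot K_Y=-1$. In particular $c_1(F)^2 = (L)^2 = -3$, and because we want $F$ exceptional of rank $2$, Lemma~\ref{thm:c2} forces $c_2(F)=\frac14(c_1(F)^2+3)=0$; this is exactly consistent with the extension~(\ref{def:f}) having zero-dimensional part $Z'=\emptyset$, i.e. $c_2(F)=c_1(\OO_Y)\cdot c_1(L)+\length(\emptyset)=0$. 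Note that $L\cdot K_Y = -1 < 0$, so $c_1(F)\cdot K_Y = -1 \not\equiv 1 \pmod 2$ is odd but negative, and $c_1(F)\not\sim K_Y$ modulo torsion since $K_Y\cdot K_Y=1>0$ cannot be matched by an element of norm $-3$; this is the second orbit of Lemma~\ref{lem:c1}.

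Next I would establish existence and uniqueness of the extension together with local freeness. Uniqueness amounts to $\Ext^1(L,\OO_Y)\simeq\C$: since $L$ is a line bundle, $\mathcal Ext^i(L,\OO_Y)=0$ for $i>0$, so by the local-to-global spectral sequence $\Ext^1(L,\OO_Y)\simeq H^1(\cHom(L,\OO_Y))=H^1(\OO_Y(-L))$. By Riemann--Roch on $Y$ (using $q=p_g=0$, $\chi(\OO_Y)=1$) one computes $\chi(\OO_Y(-L))=1+\frac12((-L)^2-(-L)\cdot K_Y)=1+\frac12(-3-1)=-1$; and $H^0(\OO_Y(-L))=0$ because if $-L$ were effective it would meet the ample $K_Y$ non-negatively, contradicting $(-L)\cdot K_Y=1>0$ — wait, that gives $(-L)\cdot K_Y=1>0$ which is consistent with effectivity, so instead I argue $H^0(\OO_Y(-L))=0$ because a section would be a curve $D$ with $D\equiv -L$, but then the $(-3)$-curve on $Z$ would deform to an effective divisor on $Y$, which by hypothesis it does not. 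Similarly $H^2(\OO_Y(-L))=H^0(\OO_Y(K_Y+L))^\vee=0$ since $(K_Y+L)\cdot K_Y=0$ and $K_Y+L\neq 0$. Hence $H^1(\OO_Y(-L))$ has dimension $1$, giving a unique non-trivial extension. Local freeness: since $L$ is a line bundle (not an ideal sheaf times a line bundle with non-trivial $Z$), the Cayley--Bacharach condition in Theorem~\ref{Hlt} is vacuous, so the non-trivial extension $F$ is automatically locally free.

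Then I would prove $F$ is exceptional. Since $c_2(F)=\frac14(c_1(F)^2+3)$, Lemma~\ref{thm:c2} gives $\chi(\End F)=1$, so it suffices to show $\Hom(F,F)=\C$ and $\Ext^2(F,F)=0$. The cleanest route is to transport the question to $Z$: on $Z$, consider the analogous extension $0\to\OO_Z\to F_Z\to\OO_Z(-C)\to 0$; dualizing and twisting, $F_Z^\vee\otimes\OO_Z(-C)$ fits into $0\to\OO_Z(-C)\to\OO_Z(-C)\otimes F_Z^\vee\to\OO_Z\to 0$, and one recognizes that $F_Z$ is, up to twist and dualization, the rank-$2$ bundle obtained from the elementary modification along $C$; concretely $F_Z\otimes\OO_Z(C)$ is an extension of $\OO_Z$ by $\OO_Z(C)$, and since $C$ is a $(-3)$-curve with $H^0(\OO_C(C))=H^1(\OO_C(C))=0$ (as $\deg\OO_C(C)=-3<0$ on $C\simeq\Pro^1$), the standard computation shows $\Hom(F_Z,F_Z)=\C$ and $\Ext^1(F_Z,F_Z)=\Ext^2(F_Z,F_Z)=0$, i.e. $F_Z$ is exceptional on $Z$. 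Exceptionality is an open condition in families (the three $\Ext$-groups are upper semicontinuous and $\chi(\End)=1$ is constant), and $F$ is a deformation of $F_Z$ over the base of the deformation $Z\leadsto Y$ after possibly restricting to a smaller neighborhood; hence $F$ is exceptional on $Y$. Alternatively one argues directly on $Y$ as in Lemma~\ref{conditions}, checking $H^0(L\otimes\II_Z)=0$ and $H^0(\omega_Y\otimes L\otimes\II_Z^2)=0$ with $Z=\emptyset$, which reduce to $H^0(\OO_Y(L))=0$ (true since $L\cdot K_Y=-1<0$) and $H^0(\OO_Y(K_Y+L))=0$ (true since $(K_Y+L)\cdot K_Y=0$ and $K_Y+L\neq 0$); the only subtlety is that Lemma~\ref{conditions} as stated assumes $L\cdot K_Y>0$, so I would adapt its proof to the case $L\cdot K_Y<0$, where the vanishing statements are in fact easier.

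Finally, non-$K_Y$-stability is immediate from the defining sequence~(\ref{def:f}): the sub-line-bundle $\OO_Y\hookrightarrow F$ has slope $\mu(\OO_Y)=0$, while $\mu(F)=\frac12 c_1(F)\cdot K_Y=\frac12 L\cdot K_Y=-\frac12<0=\mu(\OO_Y)$, so $\OO_Y$ is a destabilizing subsheaf and $F$ is not $K_Y$-stable (indeed not even $K_Y$-semistable). The main obstacle I anticipate is making the deformation-theoretic argument for exceptionality fully rigorous — specifically, arguing that $L$ (defined only numerically via specialization) is represented by an actual line bundle on $Y$ to which $\OO_Z(-C)$ specializes, and that $F_Z$ deforms to a bundle $F$ over the whole one-parameter base; this requires that the deformation $Z\leadsto Y$ be realized by a smooth family and that $\Pic$ behaves well in it, which is where the hypothesis that $C$ "does not deform" is used to control $H^0$ on nearby fibers. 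The purely cohomological verification on $Y$ sidesteps the family argument but then relies on knowing the intersection numbers $L^2=-3$, $L\cdot K_Y=-1$, which again come from the fact that specialization is an isometry.
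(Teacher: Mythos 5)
Your treatment of existence, uniqueness, and instability is essentially the paper's: you compute $L^2=-3$, $L\cdot K_Y=-1$, get $\Ext^1(L,\OO_Y)\simeq H^1(L^{\vee})\simeq\C$ from $\chi(L^{\vee})=-1$ together with $H^0(L^{\vee})=0$ (via the hypothesis that $C$ does not deform) and $H^2(L^{\vee})=H^0(K_Y+L)^*=0$, and you destabilize with $\OO_Y\hookrightarrow F$. That part is correct. The problem is exceptionality, which is the heart of the theorem, and there your primary route fails. The bundle $F_Z$ on the special fiber $Z$ is \emph{not} exceptional: since $C$ is effective on $Z$, we have $\Hom(\OO_Z(-C),\OO_Z)=H^0(\OO_Z(C))\neq 0$, and the composite $F_Z\twoheadrightarrow\OO_Z(-C)\hookrightarrow\OO_Z\hookrightarrow F_Z$ is a nonzero nilpotent endomorphism, so $F_Z$ is not even simple. (Your supporting claim $H^1(\OO_C(C))=0$ is also false: $h^1(\Pro^1,\OO(-3))=2$; correspondingly $\Ext^1(\OO_Z(-C),\OO_Z)=H^1(\OO_Z(C))$ is $2$-dimensional on $Z$, so there is not even a distinguished extension there.) Exceptionality is a phenomenon of the general fiber only --- it appears precisely because $C$ ceases to be effective on $Y$ --- so no semicontinuity argument from $Z$ can produce it.

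Your fallback, the direct verification on $Y$, is the paper's actual approach (carried out there as a nine-term $\Ext$ diagram chase), but as sketched it has two gaps. First, you need $H^2(L)=H^0(K_Y-L)^*=0$: in the diagram, $\Ext^2(F,L)$ surjects onto $\Ext^2(\OO_Y,L)=H^2(L)$ and $\Ext^2(F,F)$ surjects onto $\Ext^2(F,L)$, so this vanishing is genuinely necessary, and it does \emph{not} follow from intersection numbers since $(K_Y-L)\cdot K_Y=2>0$. The paper proves it on the special fiber: $(K_Z+C)\cdot C=-2<0$ forces any member of $|K_Z+C|$ to contain $C$, leaving an effective member of $|K_Z|=\emptyset$; then semicontinuity transfers the vanishing to $Y$. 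Your claim that the vanishings become ``easier'' when $L\cdot K_Y<0$ is exactly wrong for this one. Second, Lemma~\ref{conditions} cannot be quoted with $Z=\emptyset$: its proof of $\Hom(E,E)=\C$ uses $H^0(\II_Z)=0$, which fails here ($\II_\emptyset=\OO_Y$). Instead you must use non-splitness: the connecting map $H^0(\OO_Y)\to H^1(L^{\vee})\simeq\C$ sends $1$ to the extension class, which is nonzero, giving $H^0(F\otimes L^{\vee})=H^0(F^{\vee})=0$ and hence $\Hom(F,F)=\C$. This is precisely the role of the map $\delta$ in the paper's proof, and it is the step that distinguishes the unique nontrivial extension from the split one.
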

\begin{proof} 
By the definition of $L$ we have $L^2=-3$ and $L\cdot K_Y = -1$. Also we can compute $\chi (L) = 1 + (1/2)L \cdot (L-K_Y)=0$, and $\chi(L^{\vee}) = 1+ 1/2 (-L)\cdot (-L-K_Y) = -1$.

We claim that the line bundle $L$ satisfies $H^0(L)=H^1(L)=H^2(L)=0$, and $H^0(L^{\vee})=H^2(L^{\vee})=0$, $H^1(L^{\vee})=\C$.

Indeed, $H^0(L)=0$, since $L\cdot K_Y=-1$, and $K_Y$ is ample. On $Z$ we have $(K_Z+C)\cdot C = 1-3=-2<0$. Thus if $D\in |K_Z+C|$, then we have $D=D'+C$, where $D'$ is effective. So $D'\in |K_Z|$, which is impossible since $|K_Z|=\emptyset$.  Also by semicontinuity of cohomology~\cite{Har77}, Ch III, Theorem 12.8, the fact that $H^0(\OO_Z(K_Z+C))=0$ on $Z$ implies that $H^0(\OO_Y(K_Y)-L)=0$ on $Y$. 
We conclude that $H^1(L)=0$ using the fact that $\chi(L)=0$. 

To show that $H^0(L^{\vee})=0$, note that if $D\in H^0(L^{\vee})$, then $D^2=-3$, $D\cdot K_Y=1$, so $D$ is an irreducible curve on $Y$. By the adjunction formula $$2p_a(D)-2=D\cdot (D+K_Y)=-2,$$ so that $p_a(D)=0$.
Then $D\simeq \Pro^1$, and by our assumption $C$ does not deform to a $(-3)$-curve on $Y$.  This is a contradiction, thus $H^0(L^{\vee})=0$. Finally $H^2(L^{\vee})= H^0(\OO_Y(K_Y+L))^*$ by Serre Duality.
We have $K_Y\cdot (K_Y+L)=0$, but $K_Y+L\not\sim 0$, since for example $L\cdot (L+K_Y)=-4\neq 0$, so $H^0(\OO_Y(K_Y+L))^*=0$ since $K_Y$ is ample. Finally since $\chi(L^{\vee})=-1$, we conclude that $H^1(L^{\vee})\simeq \C$.

Now we notice that the Cayley--Bacharach property (CB) is satisfied here automatically, since $Z=\emptyset$. We compute $\Ext^1(L, \OO_Y) = H^1(L^{\vee})\simeq \C$, so the extension~(\ref{def:f}) exists, and it is unique. 
It is easy to see that $F$ is not stable here, since $c_1(F)=L$, and $L\cdot K_Y=-1$.

So it only remains to show that $F$ is exceptional.
We can apply $\Hom$ from the exact sequence~(\ref{def:f}) to itself to obtain the commutative diagram shown in Figure~\ref{figure2}.

\begin{figure}
\begin{equation*}
\begin{CD}
@.0@.0@.0\\
@.@AAA@AAA@AAA\\
@. \stackrel{b_3}{\longrightarrow} \Ext^2(\OO_Y, \OO_Y)@>>>\Ext^2(\OO_Y, F)@>>>\Ext^2(\OO_Y, L) \rightarrow0\\
@.@AAA@AAA@AAA\\
@. \stackrel{b_2}{\longrightarrow} \Ext^2(F, \OO_Y)@>>>\Ext^2(F, F)@>>>\Ext^2(F, L) \rightarrow0\\
@.@AAA@AAA@AAA\\
@. \stackrel{b_1}{\longrightarrow} \Ext^2(L, \OO_Y)@>>>\Ext^2(L, F)@>>>\Ext^2(L, L) \rightarrow 0\\
@.@AAA@AAA@AAA\\
@. \stackrel{a_3}{\longrightarrow} \Ext^1(\OO_Y, \OO_Y)@>>>\Ext^1(\OO_Y, F)@>>>\Ext^1(\OO_Y, L)\stackrel{b_3}{\longrightarrow} \\
@.@AAA@AAA@AAA\\
@. \stackrel{a_2}{\longrightarrow} \Ext^1(F, \OO_Y)@>>>\Ext^1(F, F)@>>>\Ext^1(F, L)\stackrel{b_2}{\longrightarrow} \\
@.@AAA@AAA@AAA\\
@. \stackrel{a_1}{\longrightarrow} \Ext^1(L, \OO_Y)@>>>\Ext^1(L, F)@>>>\Ext^1(L, L) \stackrel{b_1}{\longrightarrow} \\
@.@A\delta AA@AAA@AAA\\
@.0\rightarrow\Hom(\OO_Y, \OO_Y)@>>>\Hom(\OO_Y, F)@>>>\Hom(\OO_Y, L) \stackrel{a_3}{\longrightarrow} \\
@.@AAA@AAA@AAA\\
@.0\rightarrow\Hom(F, \OO_Y)@>>>\Hom(F, F)@>>>\Hom(F, L) \stackrel{a_2}{\longrightarrow} \\
@.@AAA@AAA@AAA\\
@.0\rightarrow\Hom(L, \OO_Y)@>>>\Hom(L, F)@>>>\Hom(L, L) \stackrel{a_1}{\longrightarrow} \\
@.@AAA@AAA@AAA\\
@.0@.0@.0
\end{CD}
\end{equation*}
\caption{Commutative diagram obtained from applying $\Hom$ from the exact sequence~(\ref{def:f}) to itself}
\label{figure2}
\end{figure}

Now $\Hom(\OO_Y, L)= \Ext^1(\OO_Y, L)=\Ext^2(\OO_Y,L)=0$, since $H^i(L)=0$ for all $i$. Also $\Hom(\OO_Y, \OO_Y)=\C$, and $\Ext^i(\OO_Y, \OO_Y)=0$, for $i=1,2$. Since $\Ext^i(F,G) = H^i(F^{\vee}\otimes G)$, we conclude that $\Hom(L, L)\simeq \C$, and $\Ext^i(L, L)=0$ for $i=1,2$. Also $\Hom(L,\OO_Y)=H^0(L^{\vee})=0$, $\Ext^1(L, \OO_Y)\simeq \C$, and $\Ext^2(L, \OO_Y)=0$.

Now since $\Ext^2(F, \OO_Y) = \Ext^2(F, L)=0$, we conclude that $\Ext^2(F, F)=0$.
Also $\Ext^1(F, L)=0$, and the map $\delta\neq 0$, since $\delta(id)=e\in \Ext^1(L, \OO_Y)\simeq \C$  is the extension class~\cite{Har77}, Chapter III, Exercise 6.1, and, moreover, since $\dim \Ext^1(L, \OO_Y)=1$, $\delta$ is an isomorphism. It implies that 
\begin{equation*}\Hom(F, \OO_Y)=\Ext^1(F, \OO_Y)=0.
\end{equation*}
 So $\Ext^1(F, F)=0$, and $\Hom(F, F)\simeq \Hom(F, L)\simeq \Hom(L, L)\simeq \C$. Thus $F$ is exceptional. 
\end{proof}

\begin{remark}\label{rk:flip}
Note that the vector bundle $F$ from  Theorem~\ref{nonstablebundle} can be obtained by applying the construction of Hacking (Theorem~\ref{thm:hacking}) to  birational modification of the family $\mathcal Z/\Delta$, with special fiber $Z$ and general fiber $Y$.

We can blow $Z$ up at a point of $C$, to obtain a surface $\X$ containing a chain of two $\Pro^1$'s with self intersections $(-4)$, and $(-1)$, intersecting at one point. Now we can contract the $(-4)$ curve on $\X$ to obtain a surface $X$ with a unique $\frac{1}{4}(1,1)$ singularity. Let $\Gamma$ be the image of $(-1)$--curve on $X$.  Then the birational map $X\dashrightarrow Z$, $X\backslash \Gamma \simeq Z\backslash C$ extends to a birational map  $\mathcal X\dashrightarrow \mathcal Z$ over $\Delta$, with $\mathcal X\backslash\Gamma \simeq \mathcal Z\backslash C$, where $\mathcal X$ is a 3-fold with terminal singularities.  
   Thus according to ~\cite{Ha11}, we can construct a reflexive sheaf $\mathcal E$ on $\mathcal X$ such that its restriction $F$ on a nearby smooth fiber $Y$ is exceptional, and $sp(c_1(F))= 2\Gamma \in H_2(X, \Z)$, where $sp: H_2(Y,\Z)\to H_2(X, \Z)$ is a specialization map. Then $c_1(F)=L \in H_2(Z,\Z)$, and one can show that $F$ is isomorphic to the bundle constructed in Theorem~\ref{nonstablebundle}.
\end{remark}

We now show that a degeneration $Y\leadsto Z$ as in Theorem~\ref{nonstablebundle} exists for a Godeaux surface $Y$ with $\Tors Y= \Z/5\Z$.
\begin{thm}
There exists a Godeaux surface $Y$ with $H_1(Y,\Z)\simeq \Z/5\Z$, containing no $(-3)$-curves. 
\end{thm}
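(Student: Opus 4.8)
The plan is to exhibit a concrete Godeaux surface with $H_1(Y,\Z)\simeq\Z/5\Z$ and then verify that it contains no $(-3)$-curve by a direct argument on the quintic \'etale cover. Recall from \cite{Reid} that a Godeaux surface $Y$ with $\Tors Y=\Z/5\Z$ is the quotient $\overline Y/(\Z/5\Z)$, where $\overline Y\subset\Pro^4$ is a quintic hypersurface invariant under the diagonal $\Z/5\Z$-action $x_i\mapsto\zeta^i x_i$ (here $\zeta$ a primitive fifth root of unity), and the action is free on $\overline Y$; moreover $K_{\overline Y}=\OO_{\overline Y}(1)$ and the covering $p\colon\overline Y\to Y$ satisfies $p^*K_Y=K_{\overline Y}$. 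So the first step is to pick an explicit $\Z/5\Z$-invariant quintic $\overline Y$ that is smooth and on which the group acts without fixed points; the Fermat-type quintic $x_0^5+\dots+x_4^5$ does not have a free action, but a generic invariant quintic (e.g.\ one built from the monomials $x_i^3x_{i+1}x_{i+2}$ and similar, which form a basis of the invariant part of $H^0(\OO_{\Pro^4}(5))$) does, and smoothness plus freeness are Zariski-open conditions that can be checked on one member.

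**Ruling out $(-3)$-curves.** The core of the argument is: suppose $C\subset Y$ is an irreducible $(-3)$-curve. By the adjunction formula $2p_a(C)-2=C\cdot(C+K_Y)=-3+C\cdot K_Y$, and since $K_Y$ is ample $C\cdot K_Y\geq 1$, so $p_a(C)\geq 0$ forces $C\cdot K_Y=1$ and $p_a(C)=0$, i.e.\ $C\simeq\Pro^1$ and $C$ meets the canonical curve minimally. Pull back to the cover: $p^{-1}(C)$ is a curve on $\overline Y$ with $p^{-1}(C)\cdot K_{\overline Y}=5$ (since $p$ is \'etale of degree $5$ and $p^*K_Y=K_{\overline Y}$), and since $\overline Y$ is embedded by $|K_{\overline Y}|=\OO_{\overline Y}(1)$, the curve $p^{-1}(C)$ is a degree-$5$ curve in $\Pro^4$. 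Either $p^{-1}(C)$ is irreducible (and then it is a $\Z/5\Z$-invariant degree-$5$ curve, a rational normal quintic or a singular/degenerate degree-$5$ rational curve, spanning at most $\Pro^4$), or it splits as $C_1+\dots+C_5$ with the $C_i$ permuted transitively by the group, each $C_i$ a line in $\Pro^4$ lying on $\overline Y$. The plan is to show the invariant quintic can be chosen so that neither possibility occurs: $\overline Y$ contains no line (a generic quintic threefold contains finitely many lines, and one checks a generic invariant one contains none that is $\Z/5\Z$-stable or that forms a transitively-permuted $5$-tuple), and $\overline Y$ contains no invariant rational normal quintic curve of the required type. Both are closed conditions violated on an explicit member, so I would either cite a known enumerative bound or do the finite computation (e.g.\ in Macaulay2, as elsewhere in the paper) for one specific equation.

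**Main obstacle.** The genuinely delicate part is the finiteness/emptiness analysis of degree-$5$ rational curves on $\overline Y$: one must organize the possibilities for $p^{-1}(C)$ according to whether it is reduced, irreducible, the span it generates in $\Pro^4$, and the combinatorics of the $\Z/5\Z$-action on its components, and rule each out. The line case is controlled by the classical fact that lines on a generic quintic threefold are isolated together with an equivariance/genericity argument; the irreducible degree-$5$ case needs a parameter count showing that a general invariant quintic avoids the (positive-codimension) locus of quintics containing an invariant genus-$0$ degree-$5$ curve. I expect this bookkeeping, rather than any single hard computation, to be where the real work lies, and I would discharge the remaining numerics by an explicit Macaulay2 verification on one invariant quintic, exactly as the paper does for the other existence statements.
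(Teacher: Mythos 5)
Your overall strategy is the paper's: pass to the $\Z/5\Z$-\'etale cover, observe that a $(-3)$-curve has $K_Y\cdot C=1$ and $p_a(C)=0$, identify its preimage with lines on an invariant quintic, and rule those out by a genericity/deformation computation on one explicit member. But your setup contains an error that derails the execution. The \'etale cover of a Godeaux surface with $\Tors Y=\Z/5\Z$ is a quintic \emph{surface} in $\Pro^3$ (with $K=\OO(1)$ by adjunction, as you use), not a quintic hypersurface in $\Pro^4$, which would be a threefold. This is not cosmetic: your ``main obstacle'' section analyzes degree-$5$ rational curves and lines on a quintic \emph{threefold}, and the classical fact you invoke there --- that lines on a generic quintic threefold are isolated --- does not give absence of lines (a generic quintic threefold contains $2875$ of them). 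In the correct setting the needed statement is that a general $\Z/5\Z$-invariant quintic surface in $\Pro^3$ contains no lines at all; this does not follow from genericity of arbitrary quintic surfaces, because the invariant ones form a proper subfamily, so it must be checked. The paper does exactly this: it lists the lines on the Fermat quintic surface, writes $F=AX+BY$ for a line $L=(X=Y=0)$, and shows by a first-order deformation computation (a Macaulay2 check that a general invariant $G$ does not lie in the ideal $(X,Y,A,B)$) that none of these lines deforms to a general invariant quintic. Relatedly, your claim that the Fermat quintic does not admit a free $\Z/5\Z$-action is false --- the fixed locus of $x_i\mapsto\zeta^i x_i$ on $\Pro^3$ is the set of coordinate points, none of which lies on the Fermat quintic; its quotient is the classical Godeaux surface, and the Fermat quintic is precisely the base point of the paper's deformation argument.

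Two smaller points. Since $C\simeq\Pro^1$ is simply connected, any \'etale cover of it is trivial, so $\phi^{-1}(C)$ is automatically a disjoint union of five copies of $C$, each a line; the irreducible degree-$5$ case you set aside for separate treatment cannot occur, and that branch of your case analysis is unnecessary. Finally, note that the statement does not require exhibiting a surface on which the absence of lines is verified outright; it suffices (and is what the paper does) to show that the finitely many lines on one invariant quintic are obstructed to first order in the invariant directions, so that a general member of the invariant family contains none of them.
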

\begin{proof}
Suppose that a Godeaux surface $Y$ with $H_1(Y, \Z) = \Z/5\Z$ contains a $(-3)$ curve $C$. Then by adjunction formula $K_Y\cdot C$=1. On the \'etale cover $\phi:Z\to Y$ of $Y$ corresponding to $\Tors Y=\Z/5\Z$ the preimage $\phi^{-1}(C)$ therefore has to be a union of five disjoint copies of $(-3)$ curves, i.e.  five lines on a quintic surface. 

Let us show that these lines do not appear on a general $\Z/5\Z$-invariant quintic surface. We can list all the lines explicitly for the Fermat quintic and then check by a first order deformation theory calculation that these lines do not deform to a nearby general surface. 

Let $S=(F=0) \subset \Pro^3$ be a Fermat quintic, and let $L = (X=Y=0)$ be a line on $S$.  
Then we can write $F$ as $F = AX + BY$ for some quartic  forms $A$ and $B$.

Consider a family $\mathcal S\to T$, where $T = k[t]/(t^2)$ such that $\mathcal S_0 = S$ and $\mathcal S_t = (F+tG=0)\subset \Pro^3$, where $G$ is $\Z/5\Z$ invariant.
Then the line  $L\subset S$ deforms to $\mathcal L\subset \mathcal S$
over $k[t]/(t^2)$ if and only if   we can write $(F+tG)$ as $\big((A+tC)(X+tZ)+(B+tD)(Y+tW)\big)$ for some $C,D,Z,W$ of corresponding degrees 4,4,1,1. If this is possible, then we can write $G=AZ+CX+BW+DY$, so $G\in (X,Y,A,B)$.
A Macaulay calculation shows that  a general  $\Z/5\Z$-invariant $G$  cannot be written in this form for any of  the lines on  the Fermat quintic. Thus there are no $(-3)$ curves on a general Godeaux surface $Y$ with $H_1(Y) = \Z/5\Z$.\\
\end{proof}


\section{Correspondence}

This section summarizes our results relating to the correspondence~(\ref{ch}) between degenerations and vector bundles.

\begin{thm}\label{c:thm:2d}  Let $Y\leadsto X$ be a  $\Q$-Gorenstein degeneration, where $Y$ is a Godeaux surface, and $X$ has a unique singularity of type $\frac{1}{4}(1,1)$ and $K_X$ is ample. Assume $H_1(Y, \Z)\simeq H_1(X, \Z)$. Let $\sigma \in H_1(Y)$. The construction of Hacking  produces an exceptional vector bundle $E$ of rank $2$ on $Y$ with $c_1(E) = K_Y+\sigma$ modulo the equivalence relation if and only if $K_X+\sigma\in H_2(Y, \Z)$ is a 2-divisible divisor on $X$. 
\end{thm}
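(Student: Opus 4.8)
\textbf{Proof proposal for Theorem~\ref{c:thm:2d}.}

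The plan is to compute the first Chern class of the reflexive sheaf $\mathcal E$ produced by Hacking's construction (Theorem~\ref{thm:hacking}) by working on the total space of the degeneration and then restricting to the general fiber $Y$, comparing the answer with the divisors classified in Theorem~\ref{Kvb}. Concretely, let $\mathcal X/(0\in S)$ be the $\Q$-Gorenstein smoothing with special fiber $X$ and general fiber $Y$, let $\pi\colon\widetilde X\to X$ be the minimal resolution with exceptional $(-4)$-curve $C$, and let $sp\colon H_2(Y,\Z)\to H_2(X,\Z)$ be the specialization map, which is an isomorphism modulo torsion since $(P\in X)$ is a rational singularity and $H_2(Y,\Q)\simeq H_2(X,\Q)$. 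The construction of Hacking attaches to the Wahl singularity a ``discrepant'' reflexive sheaf whose restriction $D:=\mathcal E|_X$ to the central fiber has a well-understood class: on the resolution $\widetilde X$ one has $c_1(\pi^*D)=\tfrac{n}{2}(\text{something})$, and more precisely for $n=2$ the local model of the rank~$2$ exceptional sheaf on the $\frac14(1,1)$ singularity gives $c_1$ equal to an odd multiple of the generator of the local class group, so that globally $sp(c_1(E)) = 2\Gamma + (\text{torsion})$ for a suitable integral class $\Gamma\in H_2(X,\Z)$. I would extract this from Hacking's paper together with the explicit toric computation on $A_1$-type data underlying $\frac14(1,1)$.

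The two directions then go as follows. For the ``only if'' direction: if Hacking's construction yields an $E$ with $c_1(E)=K_Y+\sigma$ modulo the equivalence relation, then applying $sp$ and using that $sp$ commutes with the monodromy/translation identifications, we get $sp(K_Y+\sigma)=K_X+\sigma'$ for a torsion $\sigma'$, and this must equal $2\Gamma$ up to torsion in $H_2(X,\Z)$; hence $K_X+\sigma$ (for the appropriate representative $\sigma$) is $2$-divisible on $X$. For the ``if'' direction: suppose $K_X+\sigma$ is $2$-divisible in $H_2(X,\Z)$, say $K_X+\sigma=2\Gamma$. Then I would run Hacking's construction to produce \emph{some} exceptional rank~$2$ bundle $E$ on $Y$; by Theorem~\ref{thm:hacking} its numerical invariants are forced ($c_1(E)\cdot K_Y\equiv \pm 1\bmod 2$, $c_2(E)=\tfrac14(c_1(E)^2+3)$), and by Lemma~\ref{lem:c1} its class lies in one of the two monodromy orbits in $B/2B$. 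The divisibility hypothesis on $K_X+\sigma$ pins down $sp(c_1(E))$ to be even, which under the isomorphism $sp$ (mod torsion) and the orbit analysis forces $c_1(E)\sim K_Y\bmod\Tors Y$ rather than the other orbit; combined with $c_2(E)=1$ this is exactly the shape appearing in Theorem~\ref{Kvb}, so $E$ is equivalent to the bundle there with $c_1(E)=K_Y+\sigma$.

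I expect the main obstacle to be the precise bookkeeping of the class $sp(c_1(\mathcal E|_Y))$ in terms of the resolution data of the $\frac14(1,1)$ singularity and of $K_X$: one must correctly identify which integral lift of the local fractional class of the exceptional sheaf glues to a global class, and track the torsion ambiguity $\sigma$ versus $\sigma'$ through $sp$. A secondary subtlety is showing that $2$-divisibility of $K_X+\sigma$ on the possibly singular surface $X$ is the right condition (rather than on $\widetilde X$ or $Y$): here I would use that $H_2(X,\Z)$ sits between $H_2(\widetilde X,\Z)$ and its quotient by the $(-4)$-curve, and that Hacking's $\Gamma$ is by construction the class of the exceptional locus of the partial resolution appearing in the flip/blowup picture (cf.\ Remark~\ref{rk:flip}), so that $2\Gamma=K_X+\sigma$ is precisely the statement that the relevant extension class is realized. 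Once the class computation is in hand, the equivalence-relation and Chern-class matching are formal, using Lemma~\ref{lem:c1} and Theorem~\ref{Kvb}.
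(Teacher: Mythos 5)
Your ``only if'' direction is essentially the paper's: one cites Hacking's Theorem 1.1 to get $c_1(E)=2D$ in $\Cl(X)=H_2(X,\Z)$ for some Weil divisor $D$ on $X$, and then $K_X+\sigma=sp(c_1(E))=2D$ is $2$-divisible. That part is fine (modulo being careful that the divisibility is asserted in $H_2(X,\Z)$ itself, not just modulo torsion).

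The ``if'' direction has a genuine gap. The paper's argument is: the local class group of the $\frac14(1,1)$ singularity is $\Z/4\Z$, the local class of $K_X$ is $2$ (a local generator of $\omega_X$ is $f\,du\wedge dv$ with $f$ of weight $2$) and $\sigma$ is Cartier, so any $D$ with $2D=K_X+\sigma$ has local class $\pm1\in\Z/4\Z$, i.e.\ $\pm D$ is locally linearly equivalent to $(v=0)$; this is exactly the hypothesis of Hacking's Proposition 4.2, which then \emph{outputs} a bundle with $c_1=\pm 2D=\pm(K_Y+\sigma)$, and dualizing fixes the sign. Your proposal never verifies that $D:=(K_X+\sigma)/2$ is an admissible input to the construction (the local class could a priori be $0$ or $2$, in which case no such bundle is produced), and it replaces the direct computation of $c_1(E)$ by an appeal to Lemma~\ref{lem:c1}. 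That appeal cannot work for two reasons. First, your claim that ``the divisibility hypothesis on $K_X+\sigma$ pins down $sp(c_1(E))$ to be even'' is vacuous: by Hacking's theorem $sp(c_1(E))=2D$ is \emph{always} $2$-divisible in $H_2(X,\Z)$ for any bundle arising from the construction, so this gives no information distinguishing the two orbits; indeed $sp$ is not an isomorphism on integral lattices (its cokernel has order related to $n^2=4$), which is precisely why classes that are not $2$-divisible on $Y$ can become so on $X$. Second, even granting the orbit identification, Lemma~\ref{lem:c1} only locates $c_1(E)$ up to the monodromy action and translation by $2H^2(Y,\Z)$; the theorem asserts the construction realizes the specific class $K_Y+\sigma$, which you only get by feeding the specific $D$ with $2D=K_X+\sigma$ into Proposition 4.2. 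The identification of $\Gamma$ with the exceptional curve of Remark~\ref{rk:flip} is also unjustified in general: that remark concerns one particular birational model, whereas here $D$ is an arbitrary square root of $K_X+\sigma$ in the class group.
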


\begin{proof}
Suppose $K_X+\sigma$ is 2-divisible where $\sigma \in H_1(X)= \Tors H^2(X)= \Tors \Pic X$ is torsion. Write $K_X+\sigma =2D$. 

The local class group of the singularity $P \in X \simeq \frac{1}{4}(1,1)$ is isomorphic to $\Z/4\Z$. Here $w \in \Z/4\Z$ corresponds to the class of a divisor given by an equation $(f(u,v)=0) \subset \C^2_{u,v}/\frac{1}{4}(1,1)$ where $f$ has weight $w$ with respect to the group action, i.e., under the generator $(u,v) \mapsto (\zeta u, \zeta v)$ we have $f \mapsto \zeta^w f$. 
Note that the local class of $K_X+\sigma$ corresponds to $2 \in \Z/4\Z$.
(Indeed, a local section of $\omega_X=\OO_X(K_X)$ is given by $\Omega=f(u,v) du \wedge dv$, where $f$ has weight 2 (so that $\Omega$ is invariant with respect to the group action). Then its zero locus $(\Omega=0)=(f(u,v)=0)$ is a divisor in the class $K_X$.)  Also $\sigma$ is a Cartier divisor (i.e., corresponds to a line bundle) so $\sigma = 0$ in the local class group. 
Thus $D$ corresponds to $\pm 1 \in \Z/4\Z$, equivalently, $\pm D$ is locally linearly equivalent to $(v=0)$. Now by~\cite{Ha11}, Proposition 4.2, we obtain an exceptional bundle $F$ on $Y$ with $c_1(F) = \pm 2D = \pm( K_Y+\sigma)$. Replacing $F$ by $F^*$ if necessary we obtain $c_1(F)= K_Y+\sigma$.

Conversely, suppose $F$ is an exceptional bundle with $c_1(F)=K_Y+\sigma$ associated to a degeneration. Then we have $c_1(F)=2D \in H_2(X,\Z)=\Cl(X)$ for some divisor $D$ on $X$ by~\cite{Ha11}, Theorem 1.1
\end{proof}

\begin{prop} \label{prop:torsX}Let $Y\leadsto X$ be a degeneration of a smooth Godeaux surface $Y$ to a surface $X$ with a unique singularity $(P\in X)$ of Wahl type $\frac{1}{4}(1,1)$. Assume that $H_1(Y)\simeq H_1(X)$. Then 
$$\Tors H_2(X)\simeq \Tors H^2(X)\simeq \Tors H^2(Y).$$
\end{prop}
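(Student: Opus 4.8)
The strategy is to compare the torsion of $X$ and $Y$ by passing through the minimal resolution $\X$ and using the structure of $\X$ as a properly elliptic surface over $\Pro^1$ established in Proposition~\ref{2.1} and Theorem~\ref{cases}. First I would recall the basic facts: $(P \in X)$ is a rational (indeed cyclic quotient) singularity, so $H^2(\X,\Z)$ and $H^2(X,\Z)$ differ only by the class of the exceptional $(-4)$-curve $C$, and more precisely the map $\pi^*\colon H^2(X,\Z) \to H^2(\X,\Z)$ is injective with image the orthogonal complement issues controlled by the self-intersection $C^2 = -4$. Concretely, $\Cl(X) = \Pic(X) = H^2(X,\Z)$ and one has an exact sequence relating $H^2(X,\Z)$ to $H^2(\X,\Z)$ and the local class group $\Z/4\Z$ of the singularity; since $4$ is prime to any odd torsion, and since the universal coefficient / Poincar\'e duality identifications give $\Tors H_2(X) \simeq \Tors H^2(X)$ on these (possibly singular but $\Q$-homology-manifold-like, rational-singularity) surfaces, the first isomorphism $\Tors H_2(X) \simeq \Tors H^2(X)$ is essentially formal.

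Next I would identify $\Tors H^2(X)$ with $\Tors H^2(\X)$: adding or removing the exceptional curve $C$ changes $H^2$ only in its free part (a $(-4)$-curve contributes a primitive class of infinite order, and the blow-down/blow-up maps on torsion are isomorphisms because the link of the singularity is a rational homology sphere — indeed a lens space $L(4,1)$, whose $H_1 = \Z/4\Z$ is accounted for by the local class group, not by global torsion). So $\Tors H^2(X) \simeq \Tors H^2(\X) = \Tors H_1(\X) = \pi_1(\X)^{\mathrm{ab}}$, and by Theorem~\ref{cases} this equals $\Z/(m_1,m_2)\Z$, which by inspection of cases (a)--(e) is $\Z/4\Z, \Z/3\Z, \Z/2\Z, \Z/2\Z, \Z/\Z$ respectively. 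On the other side, by the hypothesis $H_1(Y) \simeq H_1(X) = \Tors H^2(X)$, and $\Tors H^2(Y) = H_1(Y)$ by the notational convention $\Tors(Y) = \Tors H^2(Y) = H_1(Y)$ recorded at the start of Section~2 (valid since $H_1(Y)$ is finite for a Godeaux surface). Chaining these identifications gives $\Tors H_2(X) \simeq \Tors H^2(X) \simeq H_1(X) \simeq H_1(Y) = \Tors H^2(Y)$.

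The main obstacle is the bookkeeping around the singular point: one must be careful that $H^2$ of the singular surface $X$ behaves as expected (e.g.\ that $\Cl(X) \simeq H^2(X,\Z)$, that $\Tors H_2(X) \simeq \Tors H^2(X)$, and that the local $\Z/4\Z$ of the singularity does not contribute spurious global torsion). The cleanest way to handle this is via the Mayer--Vietoris / exact sequence of the pair $(X, U)$ where $U = (P \in X)$ is the analytic neighborhood with $\partial U = L(4,1)$: since $H_1(L(4,1)) = \Z/4\Z$ and $H_2(L(4,1)) = 0$, and since $X \setminus P$ deformation retracts compatibly, one gets that $\Tors H_2(X)$ and $\Tors H_2(\X)$ differ at most by a $\Z/4\Z$-controlled term that is killed because the class of $C$ generates the relevant piece in $H_2(\X)$ and maps to the generator of $H_1(\partial U)$. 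I would also invoke \cite{Ha11} or the standard reference on resolutions of rational singularities for the statement that $\pi_1$ and torsion in $H^2$ are preserved under resolution (this was already used in Proposition~\ref{2.1} for $\pi_1$). Once these identifications are in place, the proposition follows by concatenating the three isomorphisms.
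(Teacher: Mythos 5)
Your outline assembles the right tools (Mayer--Vietoris with the link $L(4,1)$, the universal coefficient theorem, Lefschetz duality for $X^0 = X\setminus \{\text{cone}\}$, and $H_1(\X)\simeq H_1(X)$), and your final ``cleanest way'' paragraph is structurally the same as the paper's actual proof, which writes $X = X^0\cup_L C$ and runs the sequence $H_2(X)\to H_1(L)\to H_1(X^0)\oplus H_1(C)\to H_1(X)\to 0$. However, there is a genuine gap at the one step that carries all the content: showing that the local $H_1(L)\simeq \Z/4\Z$ contributes nothing to $H_1(X^0)$, equivalently that $H_2(X)\to H_1(L)$ is surjective, so that $H_1(X^0)\simeq H_1(X)$ and hence $\Tors H_2(X)\simeq \Tors H^2(X^0)\simeq \Tors H_1(X^0)\simeq H_1(X)$. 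You justify this by saying the class of $C$ ``maps to the generator of $H_1(\partial U)$,'' but under the connecting map the class $[C]$ goes to $C^2=-4\equiv 0$ in $\Z/4\Z$; what one actually needs is a class meeting $C$ with intersection number prime to $4$ (equivalently, in the unimodular lattice $H_2(\X)/\Tors$, a class pairing to $\pm 1$ with $[C]$), and this is \emph{not} automatic --- in several of the cases of Theorem~\ref{cases} the fiber components all meet $C$ evenly. This surjectivity is exactly where the hypothesis $H_1(Y)\simeq H_1(X)$ enters: the paper invokes \cite[p.~134]{Ha13}, where the kernel of $H_1(Y)\twoheadrightarrow H_1(X)$ being trivial (rather than $\Z/2\Z$) is what forces $H_2(X)\twoheadrightarrow H_1(L)$. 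In your write-up that hypothesis is only used at the very end to substitute $H_1(X)\simeq H_1(Y)$, so the essential use of it is missing.

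Two smaller corrections: the identity $\Cl(X)=\Pic(X)=H^2(X,\Z)$ is false here --- since the $\frac14(1,1)$ point is not factorial, $\Pic(X)\simeq H^2(X,\Z)$ is a proper subgroup of $\Cl(X)\simeq H_2(X,\Z)$, and the discrepancy is measured by the local class group $\Z/4\Z$; so the isomorphism $\Tors H_2(X)\simeq \Tors H^2(X)$ is precisely the non-formal assertion, not a formality. (The direction $\Tors H^2(X)\simeq \Tors H_1(X)=H_1(X)$ is indeed formal by UCT, and is the easy half.) Also, the detour through the elliptic-fibration classification of $\X$ is unnecessary: once $H_1(X^0)\simeq H_1(X)$ is established, everything follows from UCT and the duality $\Tors H_2(X)\simeq \Tors H^2(X^0)$, without ever computing $\pi_1(\X)^{\mathrm{ab}}$.
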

\begin{proof}

Write $X = X^0\cup_L C$, where $L$ is the link of singularity, and $C$ is the cone of singularity, and $X^0 = X\backslash C$.  Then there is a  Mayer--Vietoris sequence
$$H_2(X)\to H_1(L)\to H_1(X^0)\oplus H_1(C)\to H_1(X)\to 0.$$

Since $H_1(Y)\simeq H_1(X)$, by~\cite[p.~134]{Ha13},  we obtain $H_2(X)\twoheadrightarrow H_1(L)$, thus $H_1(X^0)\simeq H_1(X)$ since $C$ is contractible.

By the Universal coefficient theorem $\Tors H^2(X^0)\simeq \Tors H_1(X^0) = H_1(X)$ since $H_1(X^0) = H_1(X)$ is torsion, and $\Tors H^2(Y)\simeq \Tors H_1(Y) = H_1(Y)$.

Now by Poincare Duality $\Tors H_2(X)=\Tors H^2(X^0)$. 
So $\Tors H_2(X)\simeq H_1(X)$. Thus $\Tors H_2(X)\simeq   \Tors H^2(Y)$.
\end{proof}

\begin{prop} \label{c-2-div}In the classification of the minimal resolutions $\X$ of the $\Q$-Gorenstein degenerations $Y\leadsto X$ in Theorem~\ref{cases} the canonical class $K_X+\sigma$ is 2-divisible in $H_2(X)$ for some $\sigma\in \Tors X$ precisely in the cases (a) and (c). 
\end{prop}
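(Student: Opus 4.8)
The plan is to read off, case by case, the 2-divisibility of $K_X + \sigma$ from the Kodaira canonical bundle formula~(\ref{cbf}) on the minimal resolution $\X$, combined with the dictionary between $H_2(X)$ and divisors on $\X$ modulo the $\pi$-exceptional $(-4)$-curve $C$. Recall from Proposition~\ref{prop:torsX} that $\Tors H_2(X) \simeq \Tors H^2(Y) = H_1(Y)$, so asking whether $K_X + \sigma$ is 2-divisible in $H_2(X)$ is equivalent to asking whether the image of $K_X$ in $H_2(X)/\Tors \simeq H_2(\X)/(\Z C)$ — or rather in the relevant quotient — is 2-divisible once we are allowed to adjust by a torsion element. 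Since $H_2(X,\Z) = \Cl X$ is computed from $\Pic \X$ by passing to the quotient by the subgroup generated by $C$ (equivalently, $\Cl X = C^\perp / \Z C$ after intersecting with the appropriate lattice), I would first translate the question into: is there an integral class $D$ on $\X$, orthogonal to nothing in particular but lying in the correct coset, with $2D \equiv K_\X \pmod{\Z C + (\text{torsion})}$?

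The key computation is~(\ref{cbf}): $K_\X = \bigl(-1 + \sum_i \frac{m_i-1}{m_i}\bigr) A$ in $\Pic(\X)\otimes\Q$, where $A$ is a general fiber. Using equation~(\ref{eqn}), the coefficient equals $2/n$, so $K_\X = \frac{2}{n} A$ rationally; but integrally $K_\X = \sum_i (m_i - 1) F_i + (\text{pullback from base})$ where $F_i$ is the reduced $i$-th multiple fiber and $m_i F_i \sim A$. The question of 2-divisibility of $K_\X$ modulo torsion and modulo $C$ then reduces to an arithmetic condition on the multiplicities $m_1, m_2$ and $n$. Concretely, I expect that in each of the five cases (a)--(e) one writes $K_\X$ as an explicit integral combination of the $F_i$ and checks whether halving it stays integral after allowing the torsion ambiguity coming from the difference $F_1 \cdot m_2 - F_2 \cdot m_1$ type relations (the torsion in $\Pic \X$ generated by $m_1 F_1 - m_2 F_2$-like classes, cf. the orbifold relations). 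For case (a), $m_1 = m_2 = 4$, $n = 4$: here $K_\X = 3F_1 + 3F_2$ up to base pullback, $F_i$ has $C \cdot F_i = 1$, and $3F_1 + 3F_2 \equiv F_1 + F_2 \pmod{2}$; since $4F_i \sim A$ one finds $F_1 - F_2$ is 4-torsion, and $2(2F_1) = A \equiv$ something that makes $K_\X$ 2-divisible — I would verify $K_\X + \sigma = 2(2F_1)$ for a suitable 4-torsion $\sigma$. For case (c), $m_1 = 2$, $m_2 = 6$, $n = 6$: $K_\X = F_1 + 5F_2$, and $C \cdot F_1 = 3$, $C \cdot F_2 = 1$; one checks $F_1 + 5F_2 \equiv F_1 + F_2 \pmod{2}$ and that this is halved by $2F_2$ modulo the 2-torsion present (since $2F_1 \sim A \sim 6F_2$ forces a $\Z/2$ in the relation), giving 2-divisibility. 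For the remaining cases (b) $m_i = 3$, (d) $m_1=2, m_2=4$, (e) $m_1=2,m_2=3$, I would show the corresponding combination is \emph{not} 2-divisible: the obstruction is that the relevant lattice element $F_1 + F_2$ (or its analogue) is primitive modulo $2$ even after adjusting by the available torsion, because the torsion subgroup of $\Pic \X$ has order $(m_1, m_2)$ which is odd in case (b) and (e) and, in case (d), the $\Z/2$ torsion does not lie in the right coset.

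The main obstacle I anticipate is keeping precise track of the torsion: the allowed modification is by $\sigma \in \Tors X$, and under the identification $\Tors H_2(X) \simeq H_1(\X) = \Z/(m_1,m_2)\Z$ this torsion is generated by a specific fiber-difference class, so I must make sure the "2-divisible after twisting by torsion" condition is tested against exactly that cyclic group and not a larger one. Concretely the cleanest route is: fix the sublattice $\Lambda \subset \Pic \X$ spanned by $C$, the reduced fibers $F_i$, and a section or multisection; compute $\Lambda / \Z C$ together with its torsion; express $K_\X$ in this group; and determine whether $K_\X$ lies in $2(\Lambda/\Z C) + \Tors$. This is a finite, mechanical check once the intersection data (which is given: $C^2 = -4$, $C \cdot A = n$, $C \cdot F_i = n/m_i$, $F_i^2 = 0$, $m_i F_i \equiv A$) is in hand, and I expect cases (a) and (c) — precisely those where $K_\X$ is "2-divisible modulo torsion" in the naive sense, which one sees already from the remark following Proposition~\ref{KLP-cor} flagging that in case (c) $K_\X$ is 2-divisible modulo torsion — to be the two affirmative answers, matching the statement.
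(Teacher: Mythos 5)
Your overall strategy --- reduce to 2-divisibility of $K_X$ modulo torsion, pass to the resolution $\X$, and read everything off the Kodaira canonical bundle formula --- is the same as the paper's, and you identify the correct cases. But there are two genuine gaps, both of which bite precisely in the negative cases (b), (d), (e). First, once you work in $H_2(X)\simeq H_2(\X)/\Z C$, a square root of $\bar K_X$ lifts to $2D = K_\X + kC$, and you must separately rule out the alternative where $k$ is odd, i.e.\ that $K_\X + C$ is 2-divisible on $\X$. Your case analysis only ever manipulates fiber classes and never confronts this. The paper disposes of it in one line: if $K_\X + C = 2D$ then $D^2 = 0$ and $K_\X\cdot D = 1$, contradicting $D^2 + K_\X\cdot D \equiv 0 \pmod 2$.

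Second, your proposed ``finite mechanical check'' inside a sublattice $\Lambda$ spanned by $C$, the reduced fibers $F_i$ and a multisection cannot, as stated, prove \emph{non}-divisibility: failure of 2-divisibility in a finite-index sublattice does not imply failure in the full lattice $H_2(\X,\Z)$. What rescues the argument is that any $D$ with $2D\equiv K_\X$ modulo torsion must lie in $\Q A$ (the quotient $H_2(\X)/\Tors$ is torsion-free and $K_\X\in\Q A$), combined with the fact that $\Q A\cap H_2(\X)/\Tors = \Z\cdot\frac{1}{m}A$ where $m=\lcm(m_1,m_2)$; the latter follows from the existence of a multisection of degree exactly $m$ (\cite{FM}, Ch.~II, Prop.~2.7). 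This is the paper's key input, and with it the whole proposition collapses to the parity of $\lambda m$, where $K_\X=\lambda A$ and $\lambda = 1-\frac{1}{m_1}-\frac{1}{m_2}$: one gets $\lambda m=2$ in cases (a), (c) and $\lambda m=1$ in (b), (d), (e). You implicitly use $m_iF_i\sim A$ but never pin down the exact divisibility of the fiber class, which is exactly what the negative cases hinge on. (A minor slip: in case (a) you write $K_\X+\sigma = 2(2F_1)=A$, but $K_\X=\frac12 A$ modulo torsion, so the correct identity is $K_\X+\sigma = 2F_1$.)
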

\begin{proof}  We have $\Tors H_2(X, \Z)\simeq \Tors H^2(Y, \Z)$.   So $K_X+\sigma\in H_2(X)$ is 2-divisible for some $\sigma \in \Tors H_2(X)=\Tors H^2(Y)$ if and only if $K_X\in H_2(X)/\Tors H_2(X)$ is 2-divisible.

Let $E\simeq \Pro^1$ be the exceptional locus of  the minimal resolution $\pi:\X\to X$ of $X$. Then $\X = X^0 \cup N$, where $N =\pi^{-1} C $ is homotopy equivalent to $E\simeq\Pro^1$.
We have an exact sequence
\begin{equation}\label{H_2}
0\to \Z\to H_2(\X, \Z)\to H_2(X, \Z)\to 0,
\end{equation}
where the first map is given by $1\mapsto [E]$. Since $\Tors H_2(\X)=\Tors H_2(X)$, the exact sequence~(\ref{H_2}) is split, i.e. $H_2(\X, \Z)\simeq \Z\cdot [E] \oplus H_2(X, \Z)$, defined by
 $\alpha\mapsto (\theta(\alpha)\cdot [E]\oplus \pi_* \alpha)$ for some $\theta: H_2(\X)\to\Z$.

Now $\pi_*K_{\X}= K_X$. So $K_X$ is 2-divisible in $H_2(X, \Z)/\Tors H_2 (X, \Z)$ if and only if $K_{\X}$ or $K_{\X}+E$ is 2-divisible in $H_2(\X, \Z)$.

Note that $K_{\X}+E$ is not 2-divisible in $H_2(\X, \Z)$. Indeed, if $K_{\X}+E= 2D$ for some $D\in H_2(\X, \Z)$, then $K_{\X}\cdot D = 1$ and $D^2=0$, which is a contradiction since we always have $K_{\X}\cdot D \equiv D^2 \mod 2$.

So it only remains to check when $K_{\X}$ is divisible in $H_2(\X, \Z)/\Tors H_2(\X, \Z)$. 

Let $K_{\X} = \lambda A$, where $A$ is a general fiber of the elliptic fibration $\X\to \Pro^1$. Let $m =\lcm (m_1, m_2)$. 
By the Kodaira Canonical Bundle formula~(\ref{cbf}) we have $$\lambda=1-\frac{1}{m_1}-\frac{1}{m_2}.$$ 

Clearly, if $\mu \in \Q$, $\mu m \in \Z$, then $\mu A\in H_2(\X, \Z)$. 

Conversely, by~\cite{FM}, Chapter II, Proposition 2.7, there exists some $D\in H_2(\X, \Z)$ such that $D\cdot A = m$. Thus if $\mu A\in H_2(\X, \Z)$, then $D\cdot \mu A = \mu m$ is an integer. 
So $K_{\X}$ is 2-divisible in $H_2(\X, \Z)/\Tors H_2(\X, \Z)$ if and only if $\lambda m \in 2\Z$.

 We have:
\begin{enumerate}
\item[(a)] $m_1=4$, $m_2=4$,  $\lambda m= 2$, $K_{\X} = \frac{1}{2} A= 2F_4$;
\item [(b)]$m_1=3$, $m_2=3$,  $\lambda m= 1$,  $K_{\X} = \frac{1}{3} A = F_3$;
\item [(c)]$m_1=2$, $m_2=6$,   $\lambda m= 2$,  $K_{\X} = \frac{1}{3} A = 2F_6$;
\item [(d)]$m_1=2$, $m_2=4$,  $\lambda m= 1$,   $K_{\X}=\frac{1}{4} A = F_4$;
\item [(e)]$m_1=2$, $m_2=3$,  $\lambda m= 1$,  $K_{\X}= \frac{1}{6}A=F_2-F_3$.
\end{enumerate}
Thus $K_X$ is 2--divisible in $H_2(X)/\Tors X$ in cases (a) and (c). 
\end{proof}

\begin{proof}[Proof of the Theorem \ref{intro-c}.]
In the case $H_1(Y, \Z) = \Z/4\Z$, a degeneration $Y\leadsto X$, where $X$ has a unique Wahl singularity of type $\frac{1}{4}(1,1)$ is explicitly constructed in Proposition~\ref{p4}. By Proposition~\ref{c-2-div}, $K_X$ is 2--divisible in $H_2(X)/\Tors X$. According to the Theorem~\ref{c:thm:2d},  we can produce an exceptional vector bundle $E$ of rank $2$ on $Y$ with $c_1(E) = K_Y+\sigma$ using the construction described in  Theorem~\ref{thm:hacking}. Every such vector bundle is equivalent to the vector bundle defined in Theorem~\ref{Kvb}. 

\end{proof}

 We were not able to construct  exceptional vector bundles $E$ of rank 2 on $Y$ such that $c_1(E)\not\sim K_Y$, and $E$ is stable with respect to $K_Y$ directly. The construction of Hacking guarantees  the existence of such vector bundles in the cases $H_1(Y, \Z)= \Z/3\Z$, $H_1(Y, \Z)=\Z/2\Z$ or $H_1(Y, \Z)=0$.

The main open problem is to determine whether there are exceptional vector bundles of rank 2  on $Y$ such that $c_1(E)\not\sim K_Y$, and such that $E$ is stable with respect to $K_Y$ in the cases $H_1(Y,\Z)= \Z/4\Z$ and $H_1(Y, \Z) = \Z/5\Z$. 
If such vector bundles exist, then they cannot possibly come from  degenerations, because no such boundary components exist in the classification of Theorem~\ref{cases}, and Proposition~\ref{c-2-div}. Interestingly, in the case $H_1(Y)=\Z/5\Z$ there do exist exceptional vector bundles of rank 2 which are not $K_Y$-stable, and moreover these can be obtained from degenerations to a surface $X$ with $\frac{1}{4}(1,1)$ singularity for which $K_X$ is not nef, as shown in  Theorem \ref{nonstablebundle}.

\end{document}